\DeclareMathAlphabet\mathbfcal{OMS}{cmsy}{b}{n}
\newcommand{\mathbi}[1]{{\boldsymbol #1}}
\def\N{\mathbb{N}}
\def\R{\mathbb{R}}
\def\err{\mathsf{err}}
\def\dist{{\rm dist}}
\def\dx{\,{\rm d}x}
\def\P{\mathbfcal{P}}
\def\<{\langle}
\def\>{\rangle}
\def\dsp{\displaystyle} 
\def\div{{\rm div}}
\def\norm#1#2{\Vert#1\Vert_{#2}}
\newcounter{cst}
\def\bt{\begin{theorem}}
\def\et{\end{theorem}}
\def\bl{\begin{lemma}}
\def\el{\end{lemma}}
\def\bc{\begin{corollary}}
\def\ec{\end{corollary}}
\def\bd{\begin{definition}}
\def\ed{\end{definition}}
\def\br{\begin{remark}}
\def\er{\end{remark}}
\newcommand{\disc}{{\mathcal D}}
\def \hessian{\mathcal H}
\def \hb{\hessian^{B}}
\def \hd{\hessian_\disc}
\def \hbd{\hessian_\disc^{B}}
\def \wdspace{H^B}
\def\cv{K}
\def \symd{\mathcal S_d}
\def \sym2{\mathcal S_2}
\def\cell{K}
\DeclareMathOperator*{\argmin}{argmin}
\def\P1{\mathbb{P}_1}
\def\stab{\mathfrak{S}}
\newcommand{\polyd}{{\mathcal T}}
\newcommand{\mesh}{{\mathcal M}}
\newcommand{\edge}{{\sigma}}
\newcommand{\edges}{{\mathcal F}}              
\newcommand{\edgescv}{{{\edges}_\cv}}  
\newcommand{\edgesext}{{{\edges}_{\rm ext}}} 
\newcommand{\edgesint}{{{\edges}_{\rm int}}}
\newcommand{\centers}{\mathcal{P}}
\newcommand{\x}{\mathbi{x}}
\newcommand{\z}{\mathbi{z}}
\newcommand{\y}{\mathbi{y}}
\newcommand{\centeredge}{\overline{\mathbi{x}}_\edge}
\def\dist{\mathrm{dist}}
\newcommand{\vertex}{{\mathsf{v}}}
\newcommand{\bu}{\overline{u}}
\newcommand{\be}{\begin{equation}}
\newcommand{\ee}{\end{equation}}
\renewcommand{\O}{\Omega}
\def\dr{\partial}
\renewcommand{\d}{{\:\rm d}}
\newcommand{\ba}{\begin{array}{llll}   }
\newcommand{\bac}{\begin{array}{c}}
\newcommand{\bari}{\begin{array}{r}}
\newcommand{\ea}{\end{array}}
\newcommand{\NORM}[1]{{\left\vert\kern-0.25ex\left\vert\kern-0.25ex\left\vert #1 
    \right\vert\kern-0.25ex\right\vert\kern-0.25ex\right\vert}}
\newcommand{\dm}{\disc_m}  
\def\hat#1{\widehat{#1}}
\def\assum#1{{\rm\textbf{(P#1)}}}
\newtheorem{theorem}{Theorem}[section]
\newtheorem{remark}[theorem]{Remark}
\newtheorem{lemma}[theorem]{Lemma} 
\newtheorem{definition}[theorem]{Definition}
\newtheorem{corollary}[theorem]{Corollary}
\numberwithin{equation}{section}
\def\XXint#1#2#3{{\setbox0=\hbox{$#1{#2#3}{\int}$ }
\vcenter{\hbox{$#2#3$ }}\kern-.6\wd0}}
\newcounter{cexp}
\def\terml#1{T_{\refstepcounter{cexp}\@bsphack
\protected@write\@auxout{}%
           {\string\newlabel{#1}{{\thecexp}{\thepage}}}\thecexp}}
\begin{document}
\title[The hessian discretisation method]{The Hessian discretisation method for fourth order linear elliptic equations}
\author{J\'er\^ome Droniou}
\address{School of Mathematical Sciences, Monash University, Clayton,
	Victoria 3800, Australia.
	\texttt{jerome.droniou@monash.edu}}
\author{Bishnu P. Lamichhane}
\address{School of Mathematical and Physical
	Sciences, University of Newcastle,
	University Drive, Callaghan, NSW 2308, Australia. \texttt{Bishnu.Lamichhane@newcastle.edu.au}}
\author{Devika Shylaja}
\address{IITB-Monash Research Academy, Indian Institute of Technology Bombay, Powai, Mumbai 400076, India.
	\texttt{devikas@math.iitb.ac.in}}
\maketitle

\begin{abstract}
In this paper, we propose a unified framework, the Hessian discretisation method (HDM), which is based on four discrete elements (called altogether a Hessian discretisation) and a few intrinsic indicators of accuracy, independent of the considered model. An error estimate is obtained, using only these intrinsic indicators, when the HDM framework is applied to linear fourth order problems. It is shown that HDM encompasses a large number of numerical methods for fourth order elliptic problems: finite element methods (conforming and non-conforming) as well as finite volume methods. We also use the HDM to design a novel method, based on conforming $\P1$ finite element space and gradient recovery operators. Results of numerical experiments are presented for this novel scheme and for a finite volume scheme.
\end{abstract}

\medskip

{\scriptsize
	\textbf{Keywords}: fourth order elliptic equations, numerical schemes, error estimates, Hessian discretisation method, Hessian schemes, finite element method, finite volume method, gradient recovery method. 
	
	\smallskip
	
	\textbf{AMS subject classifications}: 65N15, 65N30, 65N08.
}
\section{Introduction}
Fourth order elliptic partial differential equations arise in various applications, such as structural engineering, thin plate theories of elasticity, thin beams, biharmonic problems, the Stokes problem, image processing, etc. A large number of schemes, such as finite element (conforming, non-conforming) and finite volume methods, have been developed for the numerical approximation of these models. The purpose of this paper is to introduce a unified analysis framework, the Hessian discretisation method (HDM), that covers most of these schemes; by highlighting key abstract properties that ensure the scheme's convergence, the HDM also enables the design of novel schemes. We focus here on linear fourth order problem; non-linear models will be covered in a forthcoming paper.

\smallskip

The principle of the HDM, inspired by the Gradient Discretisation Method for 2nd order problems \cite{koala}, is to first select four discrete elements (a space and three reconstruction operators), altogether called a Hessian discretisation (HD). These elements are then substituted, in the weak formulation of the model, to the corresponding continuous space and operators, giving rise to a numerical scheme; this scheme is called a Hessian scheme (HS). A few indicators only, independent of the model and related to the coercivity, consistency and limit-conformity of the HD, are required to write error estimates in $L^2$, $H^1$ and $H^2$ norms for the corresponding HS. We show that schemes of the finite element and finite volume families fit into the HDM, with proper choices of HD, and we design a novel method based on the conforming $\P1$ space and a gradient recovery operator.

\smallskip

The finite element (FE) method is one of the most well-known tools for solving fourth-order elliptic boundary value problems. When conforming finite elements are used, the corresponding space must be a subspace of $H^2_0(\O)$. The corresponding strong continuity requirement of function and its derivatives makes it difficult to construct such a finite element, and leads to schemes with a large number of unknowns \cite{BS94,dou_percell_scott,ciarlet1978finite,percell_ctfem,powell_sabin}. It is known that to consider a conforming finite element space with $C^1$ continuity for a fourth-order problem, like the plate bending problem, a polynomial of degree at least 5 with 18 parameters (Bell's triangle) is required for a triangular element, and a bi-cubic polynomial with 16 parameters for a rectangular element (Bogner-Fox-Schmit rectangle) \cite{ciarlet1978finite}. The nonconforming finite element method relaxes the continuity requirement, which has a great impact on the resulting scheme. For the fourth order problem, two interesting nonconforming elements are the Adini rectangle and the Morley triangle \cite{ciarlet1978finite}. The finite element methods have been well-developed for the fourth order partial differential equation with variable\ constant coefficients, biharmonic problem and the bending problem, see \cite{BSBPK84,BPK85,ZX92,TS80,JL99,YLRAKL07,RF78,RERH10,BL_stab.mixedfem,PLPL75,NNBPK96}. We refer to \cite{biharmonicFV} and the reference therein for a discussion of finite volume methods for the biharmonic problem on general meshes. The interest of the method in \cite{biharmonicFV} is that it is easy to implement, computationally cheap
and requires only one unknown per cell. The analysis in \cite{biharmonicFV} is first based on meshes that respect an adequate orthogonality property, and then generalized to general polygonal meshes. In \cite{BL_FEM}, a finite element method for the biharmonic equation is presented; this method is based on gradient recovery operator, where the basis functions of the two involved spaces satisfy a condition of biorthogonality. The main idea is to use the gradient recovery operator to lift the non-differentiable, piecewise-constant gradient of $\P1$ finite element functions into the $\P1$ finite element space itself; the lifted functions are thus differentiable, and can be used to compute some kind of Hessian matrix of $\P1$ finite element functions. Ensuring the coercivity of the method in \cite{BL_FEM} on generic triangular/tetrahedral meshes however requires the addition of a stabilisation term. We also refer to \cite{CGZ16} for the application of the gradient recovery operator to fourth order eigenvalue problems.

\smallskip

We note that the interest of the HDM is that it extends the analysis beyond the setting of FE methods. It covers in particular situations where the second Strang lemma cannot be applied either because the continuous bilinear form cannot be extended to the space of discrete functions, and match there the discrete bilinear form, or even because the discrete space used in the scheme is not a space of functions (and the sum of the continuous and discrete spaces does not make sense).
 
\smallskip

The paper is organised as follows. In Section \ref{sec.modelproblem}, we introduce the model problem and  list some important examples of fourth order problems. We present the Hessian discretisation method in Section \ref{sec.HDM}, together with the error estimate established in this framework. In Section \ref{sec.grmethod}, we present a novel scheme based on the $\P1$ FE space and a gradient recovery designed using biorthogonal systems; this scheme does not require additional stabilisation terms, as the corresponding Hessian discretisation is built to already satisfy all  required coercivity properties. In Section \ref{sec.fvm}, we show that the finite volume method in \cite{biharmonicFV} is an HDM, and that the generic error estimate established in the HDM slightly improves the estimates found in \cite{biharmonicFV}, see Remark \ref{FVM.remark} below. Numerical results are presented to illustrate the theoretical convergence rate established in the HDM for the gradient recovery method and finite volume method in Section \ref{sec.example}. In Section \ref{sec.fem}, we show that some known schemes (conforming and non-conforming FE schemes) fit into the HDM. Finally, some technical results are gathered in an appendix.

\smallskip

\textbf{Notations}. A fourth order symmetric \emph{tensor} $P$ is a linear map $\symd(\R) \rightarrow \symd(\R)$, where $\symd(\R)$ is the set of symmetric matrices, $d$ is the dimension; $p_{ijkl}$ denote the indices of the fourth order tensor $P$ in the canonical basis of $\symd(\R)$. For simplicity, we follow the Einstein summation convention unless otherwise stated, i.e, if an index is repeated in a product, summation is implied over the repeated index. For  $\xi \in \symd(\R)$, using the definition of symmetric tensor, one has $P\xi \in \symd(\R)$ and $p_{ijkl}=p_{jikl}=p_{ijlk}$. The scalar product on $\symd(\R)$ is defined by $\xi:\phi=\xi_{ij}\phi_{ij}$. For a function $\xi: \O \rightarrow \symd(\R)$, denoting the Hessian matrix by $\hessian$ we set $\hessian : \xi = \partial _{ij}\xi_{ij}$. Finally, the transpose $P^{\tau}$ of $P$ is given by $P^{\tau} = (p_{klij})$, if $P = (p_{ijkl})$. Note that $P^{\tau}\xi : \phi = \xi : P \phi$.
The tensor product $a\otimes b$ of two vectors $a,b\in\R^d$ is the 2-tensor with coefficients $a_ib_j$. The Euclidean norm on $\R^d$ is denoted by $|{\cdot}|$, as is the induced norm on $\symd(\R)$. The Lebesgue measure of a measurable set $E\subset \R^d$ is denoted by $|E|$ (note that the nature of the argument of $|{\cdot}|$, a vector or a set, makes it clear if we talk about the Euclidean norm or the Lebesgue measure). The norm in $L^2(\O)$, $L^2(\O)^d$ for vector-valued functions, and $L^2(\O;\R^{d\times d})$ for matrix-valued functions, is denoted by $\|{\cdot}\|$.

\section{Model problem} \label{sec.modelproblem}

Let $\O \subset \R^d$ be a bounded domain  with boundary $\partial \Omega$ and consider the following fourth order model problem with clamped boundary conditions.
\begin{subequations} \label{model_problem}
	\begin{align}
& {\sum_{i,j,k,l=1}^{d}\partial_{kl}(a_{ijkl}\partial_{ij}\bu) = f  \quad \mbox{ in } \Omega, }\label{problem}\\
	 &\qquad \qquad \quad \bu=\frac{\partial \bu}{\partial n}= 0\quad\mbox{ on $\partial\O$}, \label{bc1}
	\end{align}
\end{subequations} 
where $ \x=(x_1,x_2,...,x_d) \in \O$,  $f \in L^2(\O)$, $n$ is the unit outer normal to $\O$ and the coefficients $a_{ijkl}$ are measurable bounded functions which satisfy the conditions $ a_{ijkl}=a_{jikl}=a_{ijlk} =a_{klij}$ for $i,j,k,l=1,\cdots,d .$
For all $\xi, \phi \in \symd(\R)$,\, we assume the existence of a fourth order tensor $B$ such that $A\xi:\phi =B\xi:B\phi,$ where $A$ is the four-tensor with indices $a_{ijkl}$. We notice that $B\xi:B\phi = B^{\tau}B\xi : \phi$, so that $A = B^{\tau}B$.

\medskip

Setting 
\[
\begin{aligned}
V=H^2_{0}(\O)={}&\left\{ v \in H^2(\O); v=\frac{\partial v}{\partial n} = 0 \mbox{ on } \partial\O \right\}\\
={}&\left\{ v \in H^2(\O); v=|\nabla v| = 0 \mbox{ on } \partial\O \right\},
\end{aligned}
\]
the weak formulation of \eqref{model_problem} is
\be\label{weak}
	\mbox{Find  $\bu \in V$ such that $\forall v \in V$,}
\quad \int_\O \hb \bu:\hb v \d\x=\int_\O fv\d\x,
\ee
where $\hb v=B\hessian v$. Note that $\int_\O \hb \bu:\hb v \d\x=\int_\O A\hessian \bu:\hessian v\d\x$, since $A=B^\tau B$.
We assume in the following that $B$ is constant over $\O$, and that the following coercivity property holds:
\be \label{coer:B}
\exists \varrho>0\mbox{ such that } \norm{\hb v}{}\ge \varrho\norm{v}{H^2(\Omega)},\;\forall v\in H^2_0(\Omega).
\ee
Hence, the weak formulation \eqref{weak} has a unique solution by the Lax--Milgram lemma.

\begin{remark} Adapting the analysis of Section \ref{sec.HDM} to $B$ dependent on $\x\in \O$ is easy,
provided that the entries of $B$ belong to $W^{2,\infty}(\Omega)$.
\end{remark}

\subsection{Examples}\label{sec:examples}

Let us examine two specific examples of the abstract problem \eqref{model_problem}. 

\smallskip

\subsubsection{Biharmonic problem}\label{sec:bihar}
The biharmonic problem is
\be
\label{biharmonic}
\Delta^2u=f \mbox{ in } \O,\qquad u=\frac{\partial u}{\partial n}=0 \mbox{ in }\partial \O.
\ee
The weak formulation of this model is given by \eqref{weak} provided that $B$ is chosen to satisfy
\[
\int_{\O}^{} \hb u : \hb v\d\x=\int_{\O}^{} \Delta u\Delta v\d\x.
\]
One possible choice of $B$ is therefore to set $B\xi=\frac{\rm{tr}(\xi)}{\sqrt{d}}{\rm Id}$ for $\xi \in \symd(\R)$ (where ${\rm Id}$ is the identity matrix), in which case $\hb=\Delta$. Since $\int_{\O}^{} \Delta u\Delta v\d\x=\int_\O \hessian u:\hessian v\d\x$, another possibility is to set $B$ the identity tensor ($B\xi=\xi$), in which case $\hb=\hessian$. By the Poincar\'e inequality, both choices satisfy \eqref{coer:B}.

\smallskip

\subsubsection{Plate problem}
The clamped plate problem \cite[Chapter 6]{ciarlet1978finite} corresponds to \eqref{weak} with $d=2$ and left-hand side
\begin{equation}\label{plate.lhs}
\int_{\O}^{} \Delta u\Delta v +(1-\gamma)(2\partial_{12}u \partial_{12}v-\partial_{11}u \partial_{22}v-\partial_{22}u \partial_{11}v)\d\x.
\end{equation}
Here, the constant $\gamma$ lies in the interval $(0, \frac{1}{2})$. We notice that \eqref{plate.lhs} is equal to $\int_\O A\hessian u:\hessian v\d\x$, where the tensor $A$ has non-zero indices $a_{1111}=1$, $a_{2222}=1$, $a_{1212}=(1-\gamma)$, $a_{2121}=(1-\gamma)$, $a_{1122}=\gamma$ and $a_{2211}=\gamma$. Its `square root' can be defined as the tensor $B$ with non-zero indices $b_{1111} = b_{2222} = \sqrt{\frac{1+\sqrt{1-\gamma^2}}{2}}$, $b_{1122} = b_{2211} = \sqrt{\frac{1-\sqrt{1-\gamma^2}}{2}}$ and $b_{1212}= b_{2121}=\sqrt{1-\gamma}$. It can be checked that \eqref{coer:B} holds since, for some $\varrho>0$, $A\mathbi{\xi}:\mathbi{\xi}\ge \varrho^2 |\mathbi{\xi}|^2$ for all $\mathbi{\xi}\in\symd(\R)$.

\section{The Hessian discretisation method}\label{sec.HDM}
We present here the Hessian discretisation method, and list the properties that are required for the convergence analysis of the Hessian scheme. The error estimate is stated at the end of the section.

\begin{definition}[$B\textendash$Hessian discretisation]\label{GD}
A $B\textendash$Hessian discretisation for clamped boundary conditions is a quadruplet $\disc=(X_{\disc,0},\Pi_\disc,\nabla_\disc,\hbd)$ such that
\begin{itemize}
\item $X_{\disc,0}$ is a finite-dimensional space encoding the unknowns of the method,
\item $\Pi_\disc:X_{\disc,0}  \rightarrow L^2(\O)$ is a linear mapping that reconstructs a function from the unknowns,
\item $\nabla_\disc:X_{\disc,0}  \rightarrow L^2(\O)^d$ is a linear mapping that reconstructs a gradient from the unknowns,
\item $\hbd:X_{\disc,0}  \rightarrow L^2(\O;\R^{d\times d})$ is a linear mapping that reconstructs a discrete version of $\hb(=B\hessian)$ from the unknowns. It must be chosen such that $\norm{\cdot}{\disc}:=\norm{\hbd \cdot}{}$ is a norm on  $X_{\disc,0}.$
\end{itemize}
\end{definition}

\begin{remark}[Dependence of the Hessian discretisation on $B$]
In the (2nd order) gradient discretisation method, the definition of a gradient discretisation is independent of the differential operator. Here, our definition of Hessian discretisation depends on $B$, that appears in the differential operator. This is justified by the fact that some methods (such as the one presented in Section \ref{sec.fvm}) are not built on an approximation of the entire Hessian of the functions, but only on some of their derivatives (such as the Laplacian of the functions). Although it might be possible to enrich these methods by adding approximations of the `missing' second order derivatives (as done in \cite{fvca8-agdm} in the context of the GDM), it does not seem to be the most natural way to proceed, and it leads to additional technicality in the analysis. Making the definition of HD dependent on the considered model through $B$ enables us to more naturally embed some known methods into the HDM.

Note however that a number of FE methods provide approximations of the entire Hessian of the functions (see Sections \ref{sec.grmethod} and \ref{sec.fem}). For those methods, a $B$-Hessian discretisation is built from an ${\rm Id}$-Hessian discretisation (that is independent of the model) by setting $\hbd=B\hessian_\disc^{\rm Id}$.

\end{remark}

If $\disc=(X_{\disc,0},\Pi_\disc,\nabla_\disc,\hbd)$ is a $B\textendash$Hessian discretisation, the corresponding scheme for \eqref{model_problem}, called Hessian scheme (HS), is given by
\be\label{base.HS}
\begin{aligned}
&\mbox{Find $u_\disc\in X_{\disc,0}$ such that for any $v_\disc\in X_{\disc,0}$,}\\
&\int_\O \hbd u_\disc:\hbd v_\disc\d\x=\int_\O f\Pi_\disc v_\disc\d\x.
\end{aligned}
\ee
This HS is obtained by replacing, in the weak formulation \eqref{weak}, the continuous space $V$ by $X_{\disc,0}$, and by using the reconstructions $\Pi_\disc$ and $\hbd$ in lieu of the function and its Hessian.

We will show that the accuracy of the HS can be evaluated using only three measures, all intrinsic to the Hessian discretisation. The first one is a constant, $C_\disc^B$, which controls the norm of the linear mappings $\Pi_\disc$ and $\nabla_\disc$.
 \be\label{def.CD}
 C_\disc^B = \max_{w\in X_{\disc,0}\backslash\{0\}} \left(\frac{\norm{\Pi_\disc w}{}}{\norm{\hbd w}{}},
 \frac{\norm{\nabla_\disc w}{}}{\norm{\hbd w}{}}\right).
 \ee
The second measure of accuracy is the interpolation error $S_\disc^B$ defined by
 \be\label{def.SD}
 \begin{aligned}
 	&\forall  \,\varphi\in H^2_0(\O)\,,\\
 	&S_\disc^B(\varphi)=\min_{w\in X_{\disc,0}}\Big(\norm{\Pi_\disc w-\varphi}{}
 +\norm{\nabla_\disc w-\nabla\varphi}{}+\norm{\hbd w-\hb \varphi}{}\Big).
 \end{aligned}
 \ee
Finally, the third quantity is a measure of limit-conformity of the HD, that is, how well a discrete integration-by-parts formula is verified by the discrete operators:
 \be\label{def.WD}
 \begin{aligned}
 	&\forall \, \xi \in \wdspace(\O):=\{\zeta\in L^2(\O)^{d \times d}\,;\,\hessian:B^{\tau}B\zeta \in L^2(\O) \}\,,\\
 	&W_\disc^B(\xi)=\max_{w\in X_{\disc,0}\backslash\{0\}}
 	\frac{1}{\norm{\hbd w}{}}\Bigg|\int_\O \Big((\hessian:B^{\tau}B\xi)\Pi_\disc w - B\xi:\hbd w \Big)\d\x \Bigg|.
 \end{aligned}
 \ee
Note that if $\xi \in \wdspace(\O)$ and $\phi\in H^2_0(\O)$, integration-by-parts show that $\int_{\O}^{}(\hessian:B^{\tau}B\xi)\phi = \int_{\O}^{}B\xi:\hb \phi$. Hence, the quantity in the right-hand side of \eqref{def.WD} measures a defect of discrete integration-by-parts between $\Pi_\disc$ and $\hbd$.

Closely associated to the three measures above are the notions of coercivity, consistency and limit-conformity of a sequence of Hessian discretisations.

 \begin{definition}[Coercivity, consistency and limit-conformity]
 Let $(\disc_m)_{m \in \N}$ be a sequence of $B\textendash$Hessian discretisations in the sense of Definition \ref{GD}. We say that
\begin{enumerate}
\item $(\disc_m)_{m\in\N}$ is \emph{coercive} if there exists $C_P \in \R^+$ such that $C_{\disc_{m}}^B \leq C_P$ for all $m \in \N$.
\item $(\disc_m)_{m\in\N}$ is \emph{consistent}, if 
 \be\label{def:cons}
\forall \varphi\in H^2_0(\O)\,,
 \lim_{m \rightarrow \infty} S_{\disc_m}^B(\varphi)=0.
\ee
\item $(\disc_m)_{m\in\N}$ is \emph{limit-conforming}, if 
\be\label{def:lc}
\forall \xi\in \wdspace(\O)\,,
\lim_{m \rightarrow \infty} W^B_{\disc_m}(\xi)=0.
\ee
\end{enumerate}
 \end{definition}

\begin{remark}\label{rem:dens} As for the (2nd order) gradient discretisation method, see \cite[Lemmas 2.16 and 2.17]{koala}, it is easily proved that, for coercive sequences of HDs, the consistency and
limit-conformity properties \eqref{def:cons} and \eqref{def:lc} only need to be tested for functions in dense subsets of $H^2_0(\O)$ and $H^B(\O)$, respectively.
\end{remark}

\begin{remark}\label{B_I_GD}
If $B = {\rm Id}$, we write $\hd$ (resp. $C_\disc$, $	S_\disc$ and $W_\disc$) instead of $\hessian_{\disc}^{{\rm Id}}$ (resp. $C_\disc^{\rm  Id}$, $	S_\disc^{\rm  Id}$ and $W_\disc^{\rm  Id}$).
\end{remark}

We can now state our main theorem giving the error estimates.

\begin{theorem}[Error estimate for Hessian schemes]\label{error} Under Assumption \eqref{coer:B}, let $\bu$ be the solution to \eqref{weak}. Let $\disc$ be a $B\textendash$Hessian discretisation and $u_\disc$ be the solution to the corresponding Hessian scheme \eqref{base.HS}. Then we have the following error estimates:
\begin{align}
\norm{\Pi_\disc u_\disc -\bu}{}& \le C_\disc W_\disc^B(\hessian \bu) +(C_\disc+1) S_\disc^B(\bu), \label{Pi_error} \\
\norm{\nabla_\disc u_\disc -\nabla\bu}{}
& \le C_\disc W_\disc^B(\hessian \bu) +(C_\disc+1) S_\disc^B(\bu), \label{nabla_error} \\
\norm{\hbd u_\disc-\hb \bu}{}
& \le W_\disc^B(\hessian \bu) +2S_\disc^B(\bu). \label{Hessian_error} 
\end{align}	
 (Note that $\hessian \bu \in \wdspace(\O)$ because $\hessian \bu \in  L^2(\O)^{d \times d} \mbox{ and }\hessian:B^{\tau}B \hessian \bu = \hessian : A \hessian \bu = f \in L^2(\O)$.)
\end{theorem}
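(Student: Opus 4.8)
The plan is to prove a C\'ea‑type estimate. First I would record well‑posedness: since $\norm{\hbd\cdot}{}=\norm{\cdot}{\disc}$ is a norm on the finite–dimensional space $X_{\disc,0}$, the symmetric bilinear form $(u,v)\mapsto\int_\O\hbd u:\hbd v\d\x$ is coercive on $X_{\disc,0}$, so the linear system \eqref{base.HS} has a unique solution $u_\disc$. By finite–dimensionality the minimum in \eqref{def.SD} is attained: fix $w\in X_{\disc,0}$ with $\norm{\Pi_\disc w-\bu}{}+\norm{\nabla_\disc w-\nabla\bu}{}+\norm{\hbd w-\hb\bu}{}=S_\disc^B(\bu)$, so each of these three summands is $\le S_\disc^B(\bu)$. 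The whole argument then rests on controlling the purely discrete quantity $\norm{\hbd(u_\disc-w)}{}$ and transferring that bound to the three errors via triangle inequalities and the constant $C_\disc^B$ of \eqref{def.CD}.

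\textbf{Rewriting the scheme.} Testing \eqref{weak} against $C_c^\infty(\O)$ and using $A=B^\tau B$ shows that $\hessian:B^\tau B\hessian\bu=\hessian:A\hessian\bu=f$ in $\mathcal D'(\O)$ (this is just \eqref{problem}); since $f\in L^2(\O)$ this proves the parenthetical claim that $\hessian\bu\in\wdspace(\O)$. Subtracting $\int_\O\hb\bu:\hbd v_\disc\d\x$ from both sides of \eqref{base.HS} and using $\hb\bu=B\hessian\bu$, the right–hand side becomes precisely the integrand appearing in the definition \eqref{def.WD} of $W_\disc^B$ at $\xi=\hessian\bu$, whence
\[
\left|\int_\O(\hbd u_\disc-\hb\bu):\hbd v_\disc\d\x\right|\le W_\disc^B(\hessian\bu)\,\norm{\hbd v_\disc}{}\qquad\text{for all }v_\disc\in X_{\disc,0}.
\]

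\textbf{Conclusion.} Next I would take $v_\disc=u_\disc-w$ and split $\hbd u_\disc-\hb\bu=\hbd(u_\disc-w)+(\hbd w-\hb\bu)$ inside the integral, which gives
\[
\left|\,\norm{\hbd(u_\disc-w)}{}^2+\int_\O(\hbd w-\hb\bu):\hbd(u_\disc-w)\d\x\,\right|\le W_\disc^B(\hessian\bu)\,\norm{\hbd(u_\disc-w)}{}.
\]
Bounding the second term by $\norm{\hbd w-\hb\bu}{}\,\norm{\hbd(u_\disc-w)}{}$ with Cauchy--Schwarz and dividing by $\norm{\hbd(u_\disc-w)}{}$ (the estimate being trivial if it vanishes) yields $\norm{\hbd(u_\disc-w)}{}\le W_\disc^B(\hessian\bu)+\norm{\hbd w-\hb\bu}{}\le W_\disc^B(\hessian\bu)+S_\disc^B(\bu)$. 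Then \eqref{Hessian_error} follows from $\norm{\hbd u_\disc-\hb\bu}{}\le\norm{\hbd(u_\disc-w)}{}+\norm{\hbd w-\hb\bu}{}$, while \eqref{Pi_error} and \eqref{nabla_error} follow from $\norm{\Pi_\disc u_\disc-\bu}{}\le\norm{\Pi_\disc(u_\disc-w)}{}+\norm{\Pi_\disc w-\bu}{}\le C_\disc^B\norm{\hbd(u_\disc-w)}{}+\norm{\Pi_\disc w-\bu}{}$ (and the analogous inequality with $\nabla_\disc$), inserting the bound on $\norm{\hbd(u_\disc-w)}{}$ and using $\norm{\Pi_\disc w-\bu}{},\norm{\nabla_\disc w-\nabla\bu}{}\le S_\disc^B(\bu)$.

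\textbf{Main difficulty.} This is essentially the fourth–order transcription of the classical gradient–discretisation error estimate, so I do not expect a genuine obstacle. The two points needing care are: (i) identifying $f$ with $\hessian:B^\tau B\hessian\bu$, which is exactly what lets one recognise the right–hand side of the scheme minus $\int_\O\hb\bu:\hbd v_\disc\d\x$ as the defect measured by $W_\disc^B$; and (ii) performing the algebra of the third paragraph in the correct order — dividing by $\norm{\hbd(u_\disc-w)}{}$ \emph{before} passing to $\norm{\hbd u_\disc-\hb\bu}{}$ — so that the sharp constants $W_\disc^B(\hessian\bu)+2S_\disc^B(\bu)$ and $C_\disc^B W_\disc^B(\hessian\bu)+(C_\disc^B+1)S_\disc^B(\bu)$ emerge rather than weaker ones.
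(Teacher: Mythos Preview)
Your proposal is correct and follows essentially the same approach as the paper's proof: both identify $f=\hessian:B^\tau B\hessian\bu$ to bound $\int_\O(\hbd u_\disc-\hb\bu):\hbd v_\disc\d\x$ via $W_\disc^B(\hessian\bu)$, fix the minimiser of $S_\disc^B(\bu)$ (the paper calls it $P_\disc\bu$, you call it $w$), test with $v_\disc=u_\disc-w$, and then transfer the resulting bound on $\norm{\hbd(u_\disc-w)}{}$ to the three errors by triangle inequalities and the definition of $C_\disc^B$. The only cosmetic difference is that the paper splits $\hbd P_\disc\bu-\hbd u_\disc$ before choosing $v_\disc$, whereas you split after; the algebra and the final constants are identical.
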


The following convergence result is a trivial consequence of the error estimates above.

\begin{corollary}[Convergence]
Let $(\disc_m)_{m \in \N}$ be a sequence of $B\textendash$Hessian discretisations that is coercive, consistent and limit-conforming. Then, as $m \rightarrow \infty$, $\Pi_{\dm} u_{\dm} \rightarrow \bu$ in $L^2(\O)$, $\nabla_{\dm} u_{\dm} \rightarrow \nabla\bu$ in $L^2(\O)^d$ and $\hb_{\dm} u_{\dm} \rightarrow \hb \bu$ in $L^2(\O)^{d \times d}$.
\end{corollary}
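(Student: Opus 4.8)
The plan is to apply Theorem~\ref{error} once for each index $m$ and then pass to the limit in the three resulting inequalities, feeding the hypotheses of coercivity, consistency and limit-conformity one at a time into a bounded-times-null argument. The key preliminary observation is that the two functions plugged into the accuracy measures are independent of $m$: the solution $\bu$ of \eqref{weak} lies in $H^2_0(\O)$, and its Hessian $\hessian\bu$ lies in $\wdspace(\O)$, the latter membership having already been checked in the parenthetical remark of Theorem~\ref{error} since $\hessian:B^{\tau}B\hessian\bu=\hessian:A\hessian\bu=f\in L^2(\O)$. These are exactly the regularity facts that allow $\bu$ to be inserted into the consistency limit \eqref{def:cons} and $\hessian\bu$ into the limit-conformity limit \eqref{def:lc}.

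Next I would let $u_{\dm}$ be the (unique, since $\norm{\cdot}{\disc_m}$ is a norm) solution of the Hessian scheme \eqref{base.HS} attached to $\disc_m$, and write down the estimates \eqref{Pi_error}--\eqref{Hessian_error} for $\disc=\disc_m$:
\begin{align*}
\norm{\Pi_{\dm} u_{\dm}-\bu}{}&\le C_{\dm}^B\,W_{\dm}^B(\hessian\bu)+(C_{\dm}^B+1)\,S_{\dm}^B(\bu),\\
\norm{\nabla_{\dm} u_{\dm}-\nabla\bu}{}&\le C_{\dm}^B\,W_{\dm}^B(\hessian\bu)+(C_{\dm}^B+1)\,S_{\dm}^B(\bu),\\
\norm{\hb_{\dm} u_{\dm}-\hb\bu}{}&\le W_{\dm}^B(\hessian\bu)+2\,S_{\dm}^B(\bu).
\end{align*}
I would then invoke the three hypotheses in turn: coercivity furnishes $C_P$ with $C_{\dm}^B\le C_P$ for all $m$, so the factors $C_{\dm}^B$ and $C_{\dm}^B+1$ remain bounded by $C_P$ and $C_P+1$; consistency gives $S_{\dm}^B(\bu)\to 0$; and limit-conformity gives $W_{\dm}^B(\hessian\bu)\to 0$ as $m\to\infty$. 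Since the product of a bounded sequence by a null sequence is null, each right-hand side above tends to $0$, which is precisely the three convergences asserted in the statement.

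There is essentially no obstacle here, the corollary being a direct consequence of the estimates; the only point deserving a line of care is to read the multiplicative constant $C_\disc$ appearing in Theorem~\ref{error} as the quantity $C_\disc^B$ defined in \eqref{def.CD} (cf. Remark~\ref{B_I_GD}), namely the constant that the coercivity hypothesis controls uniformly. With that identification the uniform bound $C_{\dm}^B\le C_P$ is legitimately available, and the bounded-times-null conclusion goes through without further work.
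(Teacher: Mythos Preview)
Your proposal is correct and follows exactly the approach the paper intends: the paper simply states that the corollary is ``a trivial consequence of the error estimates above'' without writing out any details, and your argument spells out precisely that triviality by applying Theorem~\ref{error} for each $m$ and letting the coercivity, consistency and limit-conformity hypotheses send the right-hand sides to zero. Your remark about reading $C_\disc$ as $C_\disc^B$ is also apt, as this is a minor notational slip in the statement of Theorem~\ref{error}.
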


Let us now prove Theorem \ref{error}.

\begin{proof}[Proof of Theorem \ref{error}]
For all $v_\disc \in X_{\disc,0}$, the equation \eqref{problem} taken in the sense of distributions shows that $f=\hessian:A\hessian\bu$, and thus, by the Hessian scheme \eqref{base.HS},
	 \begin{align*}
	 \int_\O \hbd u_\disc:\hbd v_\disc\d\x&=\int_{\O}f \Pi_\disc v_\disc\d\x 	
	 =\int_{\O}(\hessian:B^{\tau}B\hessian \bu)\Pi_\disc v_\disc\d\x. 
	 \end{align*}
Using the definition of $W_\disc^B$, we infer
\be \int_\O \Big(\hessian^B \bu-\hbd u_\disc\Big):\hbd v_\disc\d\x \le W_\disc^B(\hessian \bu)\norm{\hbd v_\disc}{}. \label{wdineq}
\ee
Define the interpolant $P_\disc : H^2_0(\O) \rightarrow X_{\disc,0}$ by
\[
P_\disc \bu= \argmin_{w\in X_{\disc,0}}\Big(\norm{\Pi_\disc w-\bu}{}
+\norm{\nabla_\disc w-\nabla\bu}{}	
+\norm{\hbd w-\hb \bu}{}\Big)
\]
and notice that
\be\label{approx.PD}
\norm{\Pi_\disc P_\disc \bu-\bu}{}+\norm{\nabla_\disc P_\disc \bu-\nabla\bu}{}
+\norm{\hbd P_\disc \bu-\hb\bu}{}\le S_\disc^B(\bu).
\ee
Introducing the term $\hb \bu$ and using \eqref{wdineq}, we obtain
\begin{align*}
\int_\O \Big(\hbd{}& P_\disc \bu-\hbd u_\disc\Big):\hbd v_\disc\d\x \\
={}&\int_\O \Big(\hb \bu-\hbd u_\disc\Big):\hbd v_\disc\d\x+\int_\O \Big(\hbd P_\disc \bu-\hb \bu\Big):\hbd v_\disc\d\x  \\
\le{}& W_\disc^B(\hessian \bu)\norm{\hbd v_\disc}{} +\norm{\hbd P_\disc \bu-\hb \bu}{}
\norm{\hbd v_\disc}. 
\end{align*}
Choosing $v_\disc=P_\disc \bu-u_\disc$, we get
\begin{align*}
\norm{\hbd( P_\disc \bu-u_\disc)}{}^2  \le{}& W_\disc^B(\hessian \bu)\norm{\hbd (P_\disc \bu-u_\disc)}{}\\
& + \norm{\hbd P_\disc \bu-\hb \bu}{}\norm{\hbd (P_\disc \bu-u_\disc)}{}. \nonumber
\end{align*}
Thus, by \eqref{approx.PD},
\be\norm{\hbd P_\disc \bu-\hbd u_\disc}{} \le W_\disc^B(\hessian \bu) +S_\disc^B(\bu). \label{HdPdineq2}
\ee
A use of triangle inequality, \eqref{approx.PD} and \eqref{HdPdineq2} yields
\begin{align}
\norm{\hbd u_\disc-\hb \bu }{}& \le \norm{\hbd u_\disc-\hbd P_\disc\bu}{} + \norm{\hbd P_\disc \bu-\hb \bu}{} \nonumber \\
& \le W_\disc^B(\hessian \bu) +2S_\disc^B(\bu), \nonumber
\end{align}
which is \eqref{Hessian_error}. Using the definition of $C_\disc$, and \eqref{approx.PD} and \eqref{HdPdineq2}, we obtain
\begin{align}
\norm{\Pi_\disc u_\disc -\bu}{}&\le \norm{\Pi_\disc u_\disc -\Pi_\disc P_\disc \bu}{} + \norm{\Pi_\disc P_\disc \bu -\bu}{} \nonumber \\
& \le C_\disc\norm{\hbd P_\disc \bu-\hbd u_\disc}{} +S_\disc^B(\bu) \nonumber \\
& \le C_\disc W_\disc^B(\hessian \bu) +(C_\disc+1) S_\disc^B(\bu). \nonumber
\end{align}
Hence, \eqref{Pi_error} is established, and \eqref{nabla_error} follows in a similar way.
\end{proof}

We now aim to present particular HDMs. The first (in Section \ref{sec.grmethod}) is a novel scheme based on gradient recovery operators, and a particular cheap construction of these operators using biorthogonal basis. Then, we show that a finite volume method (in Section \ref{sec.fvm}) and known finite element methods (in Section \ref{sec.fem}) fit into the HDM. Let us first set some notations related to meshes.

\begin{definition}[Polytopal mesh {\cite[Definition 7.2]{koala}}]\label{def:polymesh}~
	Let $\Omega$ be a bounded polytopal open subset of $\R^d$ ($d\ge 1$). A polytopal mesh of $\O$ is $\polyd = (\mesh,\edges,\centers)$, where:
	\begin{enumerate}
		\item $\mesh$ is a finite family of non empty connected polytopal open disjoint subsets of $\O$ (the cells) such that $\overline{\O}= \dsp{\cup_{\cell \in \mesh} \overline{\cell}}$. For any $\cell\in\mesh$, $|\cell|>0$ is the measure of $\cell$, $h_\cell$ denotes the diameter of $\cell$, $\overline{\x}_\cell$ is the center of mass of $K$, and $n_K$ is the outer unit normal to $K$.
		
		\item $\edges$ is a finite family of disjoint subsets of $\overline{\O}$ (the edges of the mesh in 2D, the faces in 3D), such that any $\edge\in\edges$ is a non empty open subset of a hyperplane of $\R^d$ and $\edge\subset \overline{\O}$. Assume that for all $\cell \in \mesh$ there exists  a subset $\edgescv$ of $\edges$ such that the boundary of $\cell$ is ${\bigcup_{\edge \in \edgescv}} \overline{\edge}$. We then set $\mesh_\edge = \{\cell\in\mesh\,;\,\edge\in\edgescv\}$ and assume that, for all $\edge\in\edges$, $\mesh_\edge$ has exactly one element and $\edge\subset\partial\O$, or $\mesh_\edge$ has two elements and $\edge\subset\O$. Let $\edgesint$ be the set of all interior faces, i.e. $\edge\in\edges$ such that $\edge\subset \O$, and $\edgesext$ the set of boundary faces, i.e. $\edge\in\edges$ such that $\edge\subset \dr\O$. The $(d-1)$-dimensional measure of $\edge\in\edges$ is $|\edge|$, and its centre of mass is $\centeredge$.
		
		\item $\centers = (\x_\cell)_{\cell \in \mesh}$ is a family of points of $\O$ indexed by $\mesh$ and such that, for all  $\cell\in\mesh$,  $\x_\cell\in \cell$.
		Assume that any cell $\cell\in\mesh$ is strictly $\x_\cell$-star-shaped, meaning that 
		if $\x\in\overline{\cell}$ then the line segment $[\x_\cell,\x)$ is included in $\cell$.
		
	\end{enumerate}
The diameter of such a polytopal mesh is $h=\max_{K\in\mesh}h_K$. 
\end{definition}

\section{Method based on Gradient Recovery Operators}\label{sec.grmethod}

\subsection{General setting}

Let $V_h$ be an $H^1_0$-conforming finite element space with underlying mesh
$\mesh=\mesh_h$. We assume that $V_h$ contains the piecewise linear functions,
and that $\mesh_h$ satisfies usual regularity assumptions, namely, denoting by
$\rho_\cell=\max\{r>0\,;\,B(\overline{\x}_\cell,r)\subset \cell\}$
the maximal radius of balls centred at $\overline{\x}_\cell$ and included in $\cell$, we
assume that there exists $\eta>0$ (independent of $h$) such that
\be\label{reg:mesh}
\forall \cell\in\mesh\,,\; \eta\ge \frac{h_K}{\rho_\cell}.
\ee
The gradient $\nabla u$ of $u\in V_h$ is well defined, but its second derivative $\nabla \nabla u$ 
is not. In order to compute some sort of second derivatives, consider a projector
$Q_h: L^2(\O) \rightarrow V_h$, which is extended to $L^2(\O)^d$ component-wise.
Then $\nabla u$ can be projected onto $ V^d_h$, and the resulting function $Q_h \nabla u\in V_h^d$
is differentiable. We can then consider $\nabla(Q_h\nabla u)$ as a sort of Hessian of $u$.
However, it not necessarily clear, for some interesting choices of practically computable $Q_h$ (see Section \ref{sec:biorth}), that this reconstructed
Hessian has proper coercivity properties. We therefore also consider a function $\stab_h$ whose
role is to stabilise this reconstructed Hessian.

Let $(V_h,Q_h,I_h,\stab_h)$ be a quadruplet of a finite element space $V_h\subset H^1_0(\O)$,
a reconstruction operator $Q_h:L^2(\O)\to V_h$ that is a projector onto $V_h$ (that is, $Q_h={\rm Id}$ on $V_h$), an interpolant $I_h:{H^2_0(\O)}\to V_h$
 and a stabilisation function $\stab_h\in L^\infty(\O)^d$ such that,
with constants $C$ not depending on $h$,
\begin{itemize}
	\item[\assum{0}] [\emph{Strucure of $V_h$ and $I_h$}] The inverse estimate $\norm{\nabla z}{}\le Ch^{-1}\norm{z}{}$ holds for all $z \in V_h$
	and, for $\varphi \in H^2_0(\O)$, we have 
	  $\norm{\nabla I_h \varphi-\nabla \varphi}{} \le Ch\norm{\varphi}{H^2(\O)}$. 
	\item[\assum{1}] [\emph{Stability of $Q_h$}] For $\phi \in L^2(\O)$, we have $\norm{Q_h \phi}{}
	\le C \norm{\phi}{}.$ 
	\item[\assum{2}] [\emph{$Q_h\nabla I_h$ approximates $\nabla$}] 
	For some space $W$ densely embedded in $H^3(\Omega) \cap H^2_0(\Omega)$ and for all $\psi\in W$, we have $\norm{Q_h\nabla I_h \psi-\nabla \psi}{} \le Ch^2\norm{\psi}{W}$.
	\item[\assum{3}] [\emph{$H^1$ approximation property of $Q_h$}] For $w \in {H^2(\O)\cap H^1_0(\O)}$, we have
	$\norm{\nabla Q_hw-\nabla w}{} \le Ch\norm{w}{H^2(\O)}$.
	\item[\assum{4}] [\emph{Asymptotic density of $[(Q_h\nabla-\nabla)(V_h)]^\bot$}] Setting $N_h=[(Q_h\nabla-\nabla)(V_h)]^\bot$, where the orthogonality is considered for the $L^2(\O)^d$-inner product, the following approximation property holds:
\[
\inf_{\mu_h\in N_h}\norm{\mu_h-\varphi}{}\le Ch\norm{\varphi}{H^1(\O)^d},\; \forall\varphi \in H^1(\O)^d,
\]
	\item[\assum{5}] [\emph{Stabilisation function}] $1\le |\stab_h| \le C$ and, for all $\cell \in \mesh$, 
	denoting by $V_h(\cell)=\{v_{|\cell}\,;\,v \in V_h\,,\; \cell \in \mesh\}$ the local FE space,
	$$\left[\stab_{h|K}\otimes (Q_h\nabla-\nabla)(V_h(\cell))\right] \perp \nabla V_h(\cell)^d,$$
where the orthogonality is understood in $L^2(K)^{d\times d}$ with the inner product
induced by ``$:$''.
\end{itemize}

\begin{remark} A classical operator $Q_h$ that satisfies these assumptions, for standard
FE spaces $V_h$, is the $L^2$-orthogonal projector on $V_h$. This operator is however non-local
and complicated to compute. We present in Section \ref{sec:biorth} a much more efficient construction
of $Q_h$, local and based on biorthogonal bases.
\end{remark}

To construct an HD based on such a quadruplet, we assume the following stronger form of \eqref{coer:B}:
\begin{equation}\label{coer:Bs}
\exists C_B>0\,:\, |B\mathbi{\xi}|\ge  C_B|\mathbi{\xi}|\,,\quad\forall \mathbi{\xi}\in\symd(\R).
\end{equation}

\begin{definition}[$B\textendash$Hessian discretisation using gradient recovery]\label{def:HDM:GR}
Under Assumption \eqref{coer:Bs}, the $B$-Hes\-sian discretisation based on a quadruplet $(V_h,Q_h,I_h,\stab_h)$ satisfying \assum{0}--\assum{5}
is defined by: $X_{\disc,0}=V_h$ and, for $u\in X_{\disc,0}$,
\[
\Pi_\disc u=u\,,\;\nabla_\disc u = Q_h\nabla u\mbox{ and }\hbd u=B\left[\nabla (Q_h \nabla u)+\stab_h\otimes (Q_h\nabla u-\nabla u)\right].
\]
\end{definition}

The next theorem gives an estimate on the accuracy measures $C_\disc^B$, $S_\disc^B$ and $W_\disc^B$ associated with an HD $\disc$ using gradient recovery.
Incidentally, the estimate on $C_\disc^B$ also establishes that 
$\norm{\hbd\cdot}{}$ is a norm on $X_{\disc,0}$.

\begin{theorem}[Estimates for Hessian discretisations based on gradient recovery]\label{thm:HD.coerciveB}~\\
	Let $\disc$ be a $B\textendash$Hessian discretisation in the sense of Definition \ref{def:HDM:GR}, with $B$ satisfying Estimate \eqref{coer:Bs} and $(V_h,I_h,Q_h,\stab_h)$ satisfying \assum{0}--\assum{5}.
Then, there exists a constant $C$, not depending on $h$, such that
	\begin{itemize}
		\item  $C_\disc^B \le C$,
		\item $ \forall\: \varphi \in W$, $S_{\disc}^B(\varphi) \le Ch\norm{\varphi}{W}$,
		\item $\forall \:\xi \in H^2(\O)^{d\times d}$, $W_{\disc}^B(\xi) \le Ch \norm{\xi}{H^2(\O)^{d\times d}}.$
	\end{itemize}
\end{theorem}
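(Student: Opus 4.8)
The plan is to establish the three bounds in turn, each time unwinding the definition of the corresponding accuracy measure and reducing it to the assumptions \assum{0}--\assum{5}, the coercivity \eqref{coer:Bs}, and standard finite-element estimates.

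First I would tackle the bound on $C_\disc^B$, which is the heart of the matter and also, incidentally, proves that $\norm{\hbd\cdot}{}$ is a norm. Fix $w\in V_h$. By \eqref{coer:Bs}, $\norm{\hbd w}{}\ge C_B\norm{\nabla(Q_h\nabla w)+\stab_h\otimes(Q_h\nabla w-\nabla w)}{}$, so it suffices to lower-bound the latter by $\norm{w}{H^2}$ up to a constant, and then to upper-bound $\norm{w}{}$ and $\norm{Q_h\nabla w}{}$ by it. The key observation is the orthogonality \assum{5}: the two pieces $\nabla(Q_h\nabla w)$ and $\stab_h\otimes(Q_h\nabla w-\nabla w)$ live in $L^2$-orthogonal subspaces (locally on each cell, hence globally), so by Pythagoras $\norm{\hbd w}{}^2\gtrsim \norm{\nabla(Q_h\nabla w)}{}^2+\norm{\stab_h\otimes(Q_h\nabla w-\nabla w)}{}^2\gtrsim \norm{\nabla(Q_h\nabla w)}{}^2+\norm{Q_h\nabla w-\nabla w}{}^2$, using $|\stab_h|\ge1$ from \assum{5}. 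The first term controls $\norm{Q_h\nabla w}{}$ on the subspace orthogonal to constants, but one must be careful about constants; the second term, $\norm{Q_h\nabla w-\nabla w}{}$, combined with the inverse estimate in \assum{0} applied to $z=Q_h\nabla w\in V_h^d$ (so $\norm{\nabla(Q_h\nabla w)}{}\le Ch^{-1}\norm{Q_h\nabla w}{}$) — no, more useful is to go the other way. Instead: the two terms together control $\norm{Q_h\nabla w}{}$ in $H^1$, hence (by a discrete Poincaré inequality, since $w$ has zero trace so $\nabla w$ is ``mean-free'' in the appropriate sense, or by \assum{4}'s non-degeneracy) control $\norm{\nabla w}{}$, and then a Poincaré inequality on $H^1_0$ controls $\norm{w}{}$; meanwhile $\norm{\nabla(Q_h\nabla w)}{}$ plus the control of $\nabla w$ controls a full discrete $H^2$-type norm. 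I expect \textbf{this coercivity argument to be the main obstacle}: the stabilisation term only sees the component of $Q_h\nabla w-\nabla w$, and one needs \assum{4} (asymptotic density of $N_h=[(Q_h\nabla-\nabla)(V_h)]^\bot$) precisely to ensure that $\nabla w$ itself — which may well lie close to $N_h$ — is nonetheless controlled, perhaps via a compactness/contradiction argument uniform in $h$, or via an explicit decomposition of $\nabla w$ into its $N_h$-component plus its $(Q_h\nabla-\nabla)(V_h)$-component.

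Next, the consistency estimate $S_\disc^B(\varphi)\le Ch\norm{\varphi}{W}$ for $\varphi\in W$. Here I would simply test the minimum in \eqref{def.SD} with the natural candidate $w=I_h\varphi$. Then $\norm{\Pi_\disc w-\varphi}{}=\norm{I_h\varphi-\varphi}{}\le Ch\norm{\varphi}{H^2}$ (from the $H^1$ estimate in \assum{0} plus Poincaré, or a standard $L^2$ interpolation bound), $\norm{\nabla_\disc w-\nabla\varphi}{}=\norm{Q_h\nabla I_h\varphi-\nabla\varphi}{}\le Ch^2\norm{\varphi}{W}$ directly by \assum{2}, and for the Hessian term I write $\hbd I_h\varphi-\hb\varphi=B\big[\nabla(Q_h\nabla I_h\varphi)-\hessian\varphi+\stab_h\otimes(Q_h\nabla I_h\varphi-\nabla\varphi)\big]$; the last summand is $O(h^2)$ by \assum{2} and $|\stab_h|\le C$, while $\nabla(Q_h\nabla I_h\varphi)-\hessian\varphi$ is bounded by adding and subtracting $\nabla(Q_h\nabla\varphi)$ and $\nabla\nabla\varphi$: the term $\nabla(Q_h\nabla\varphi-\nabla\varphi)$ is $O(h)$ in $L^2$ by \assum{3} applied componentwise to $\nabla\varphi\in (H^2\cap H^1_0)^d$ (using $\varphi\in H^3\cap H^2_0\subset W$), and $\nabla(Q_h\nabla I_h\varphi-Q_h\nabla\varphi)$ is handled by the inverse estimate \assum{0} as $h^{-1}\norm{Q_h(\nabla I_h\varphi-\nabla\varphi)}{}\le h^{-1}\cdot C\norm{\nabla I_h\varphi-\nabla\varphi}{}\le h^{-1}\cdot Ch\norm{\varphi}{H^2}$ by \assum{1} and \assum{0}, giving $O(1)$ — too weak; so instead I would route through $\norm{\nabla(Q_h\nabla I_h\varphi - Q_h\nabla\varphi)}{}\le Ch^{-1}\norm{Q_h\nabla I_h\varphi-Q_h\nabla\varphi}{}$ and bound $\norm{Q_h\nabla I_h\varphi-Q_h\nabla\varphi}{}\le \norm{Q_h\nabla I_h\varphi-\nabla\varphi}{}+\norm{\nabla\varphi-Q_h\nabla\varphi}{}\le Ch^2\norm{\varphi}{W}+Ch\norm{\varphi}{H^2}$, the second being only $O(h)$, which after dividing by $h$ is $O(1)$ — still a problem. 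This suggests the clean route is to avoid differentiating $I_h$ altogether: bound $\norm{\nabla(Q_h\nabla I_h\varphi)-\hessian\varphi}{}$ by triangulating through $\nabla(Q_h\nabla\varphi)$ only where an inverse estimate is not needed, i.e. use $\nabla Q_h(\nabla I_h\varphi-\nabla\varphi)$ with the inverse estimate and then \assum{2}'s $O(h^2)$-strength bound (not \assum{0}'s $O(h)$ one) on $\nabla I_h\varphi-\nabla\varphi$ in the $Q_h\nabla$-image — in other words the whole point of \assum{2} being stated for $Q_h\nabla I_h$ rather than $Q_h\nabla$ is to absorb one power of $h^{-1}$. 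I would organize the triangle inequality to exploit exactly that.

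Finally, the limit-conformity estimate $W_\disc^B(\xi)\le Ch\norm{\xi}{H^2}$ for $\xi\in H^2(\O)^{d\times d}$. Unwinding \eqref{def.WD}, I must bound $\big|\int_\O\big((\hessian{:}B^\tau B\xi)\Pi_\disc w-B\xi{:}\hbd w\big)\d\x\big|$ by $Ch\norm{\xi}{H^2}\norm{\hbd w}{}$. I would substitute $\Pi_\disc w=w$, $\hbd w=B[\nabla(Q_h\nabla w)+\stab_h\otimes(Q_h\nabla w-\nabla w)]$, use $B\xi{:}B\eta=B^\tau B\xi{:}\eta$, and integrate by parts twice in the continuous term $\int_\O(\hessian{:}B^\tau B\xi)w = \int_\O B^\tau B\xi{:}\hessian w$ — but $\hessian w$ is not defined for $w\in V_h$, so instead integrate by parts once: $\int_\O (\hessian{:}B^\tau B\xi)w = -\int_\O \div(B^\tau B\xi){\cdot}\nabla w$ (boundary terms vanish since $w\in H^1_0$), and then compare $-\int_\O\div(B^\tau B\xi){\cdot}\nabla w$ with $-\int_\O B^\tau B\xi{:}\nabla(Q_h\nabla w)$. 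The latter, integrating by parts back (now legitimately, as $Q_h\nabla w\in V_h^d\subset H^1_0(\O)^d$), equals $\int_\O \div(B^\tau B\xi){\cdot}Q_h\nabla w$ — wait, sign: $\int_\O B^\tau B\xi{:}\nabla(Q_h\nabla w) = -\int_\O \div(B^\tau B\xi){\cdot}Q_h\nabla w$. So the difference of the two ``Hessian-pairing'' terms is $-\int_\O\div(B^\tau B\xi){\cdot}(\nabla w-Q_h\nabla w)$, and since $B^\tau B$ is constant, $\div(B^\tau B\xi)=B^\tau B\,\div\xi$ with $\norm{\div\xi}{H^1}\le\norm{\xi}{H^2}$. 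Now the stabilisation contribution is $-\int_\O B\xi{:}B(\stab_h\otimes(Q_h\nabla w-\nabla w)) = -\int_\O B^\tau B\xi{:}\stab_h\otimes(Q_h\nabla w-\nabla w)$. Combining, the whole quantity to bound is $\int_\O \big(B^\tau B\div\xi - (B^\tau B\xi)\stab_h\big){\cdot}(Q_h\nabla w-\nabla w)$ (up to sign), where $(B^\tau B\xi)\stab_h$ denotes the vector $(B^\tau B\xi)_{ij}(\stab_h)_j$. The factor $Q_h\nabla w-\nabla w$ is what makes this small: it is $L^2$-orthogonal to $N_h$ by definition, so I may replace the fixed vector field $g:=B^\tau B\div\xi-(B^\tau B\xi)\stab_h\in H^1(\O)^d$ by $g-\mu_h$ for any $\mu_h\in N_h$, giving $|{\cdots}|\le \norm{g-\mu_h}{}\norm{Q_h\nabla w-\nabla w}{}$, then take the infimum over $\mu_h$ and invoke \assum{4} to get $\inf_{\mu_h\in N_h}\norm{g-\mu_h}{}\le Ch\norm{g}{H^1}\le Ch\norm{\xi}{H^2}$; finally $\norm{Q_h\nabla w-\nabla w}{}\le C\norm{\hbd w}{}$ by the coercivity estimate already proved (it is one of the quantities controlled there). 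This closes the bound. The only care needed is that \assum{4} applies to $g\in H^1(\O)^d$ and that I have correctly identified $Q_h\nabla w-\nabla w\in N_h^\bot$ — indeed $Q_h\nabla w-\nabla w=(Q_h\nabla-\nabla)w\in(Q_h\nabla-\nabla)(V_h)=N_h^\bot$ by definition of $N_h$.
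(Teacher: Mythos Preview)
Your proposal has the right overall architecture but contains genuine gaps in all three parts.

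\textbf{Coercivity.} You correctly identify the Pythagoras step coming from \assum{5}, which gives
\[
\norm{\hbd w}{}^2\ge C_B^2\big(\norm{\nabla(Q_h\nabla w)}{}^2+\norm{Q_h\nabla w-\nabla w}{}^2\big).
\]
But then you wander into inverse estimates, \assum{4}, and compactness arguments, none of which are needed. The point you are missing is that $Q_h\nabla w\in V_h^d\subset H^1_0(\O)^d$, so the ordinary Poincar\'e inequality gives $\norm{Q_h\nabla w}{}\le {\rm diam}(\O)\norm{\nabla(Q_h\nabla w)}{}$ directly---there is no issue with constants. Then $\norm{\nabla w}{}\le \norm{\nabla w-Q_h\nabla w}{}+\norm{Q_h\nabla w}{}$ and one more Poincar\'e on $w\in H^1_0(\O)$ finishes. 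The coercivity is not the main obstacle; it is a two-line argument once you notice the $H^1_0$ membership of $Q_h\nabla w$.

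\textbf{Consistency.} You correctly diagnose the difficulty: after the inverse estimate you need $\norm{Q_h\nabla I_h\varphi-Q_h\nabla\varphi}{}=O(h^2)$, but the triangle inequality through $\nabla\varphi$ only gives $O(h)$ because $\norm{\nabla\varphi-Q_h\nabla\varphi}{}$ is only $O(h)$. The fix you are circling around but never state is the \emph{projector} property of $Q_h$: since $Q_h\nabla I_h\varphi\in V_h^d$ and $Q_h$ is the identity on $V_h$, one has
\[
Q_h\nabla I_h\varphi-Q_h\nabla\varphi=Q_h\big(Q_h\nabla I_h\varphi-\nabla\varphi\big),
\]
so by \assum{1} and \assum{2} this is bounded by $C\norm{Q_h\nabla I_h\varphi-\nabla\varphi}{}\le Ch^2\norm{\varphi}{W}$. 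After multiplying by $h^{-1}$ from the inverse estimate you get the required $O(h)$.

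\textbf{Limit-conformity.} Your plan to combine the two contributions into a single vector field $g=B^\tau B\div\xi-(B^\tau B\xi)\stab_h$ and invoke \assum{4} fails: \assum{4} requires $g\in H^1(\O)^d$, but $\stab_h$ is only assumed to be in $L^\infty(\O)^d$ (and in the concrete construction of Section~\ref{sec:biorth} it is piecewise constant, hence discontinuous). The two pieces must be handled separately. The $\nabla(Q_h\nabla w)$ piece does reduce, after integration by parts, to $\int_\O(Q_h\nabla w-\nabla w)\cdot\div(A\xi)$, and here \assum{4} applies since $\div(A\xi)\in H^1(\O)^d$. The stabilisation piece $\int_\O A\xi:\stab_h\otimes(Q_h\nabla w-\nabla w)$ is instead handled cellwise using the \emph{local} orthogonality in \assum{5}: since $V_h$ contains piecewise linears, $\nabla V_h(K)^d$ contains the constants, so one may subtract the cell average $\xi_K$ of $\xi$ for free, and then $\norm{\xi-\xi_K}{L^2(K)}\le Ch\norm{\xi}{H^1(K)}$ gives the factor of $h$.
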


Before proving this theorem, let us note the following straightforward consequence of Remark \ref{rem:dens}.

\begin{corollary}[Properties of Hessian discretisation based on gradient recovery]\label{cor:HD.coerciveB}~\\
	Let $(\disc_m)_{m \in \N}$ be a sequence of $B\textendash$Hessian discretisations, with $B$ satisfying Estimate \eqref{coer:Bs} and each $\disc_m$ associated with $(V_{h_m},Q_{h_m},I_{h_m},\stab_{h_m})$
satisfying \assum{0}--\assum{5} uniformly with respect to $m$. Assume that $h_m\to 0$ as $m\to\infty$.
Then the sequence $(\disc_m)_{m\in\N}$ is coercive, consistent and limit-conforming.
\end{corollary}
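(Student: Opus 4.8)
\textbf{Proof plan for Corollary \ref{cor:HD.coerciveB}.}

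The plan is to deduce the three properties directly from Theorem \ref{thm:HD.coerciveB} together with the density reductions noted in Remark \ref{rem:dens}. Coercivity is immediate: the first bullet of Theorem \ref{thm:HD.coerciveB} gives a constant $C$, independent of $h$, with $C_{\disc_m}^B\le C$ for all $m$; since the hypotheses \assum{0}--\assum{5} are assumed to hold uniformly in $m$, this $C$ is genuinely uniform, which is exactly the definition of a coercive sequence with $C_P=C$.

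For consistency, I would first invoke Remark \ref{rem:dens}: for a coercive sequence of HDs, it suffices to verify $\lim_m S_{\disc_m}^B(\varphi)=0$ for $\varphi$ ranging over a dense subset of $H^2_0(\O)$. The natural candidate is the space $W$ appearing in \assum{2}, which is densely embedded in $H^3(\O)\cap H^2_0(\O)$ and hence dense in $H^2_0(\O)$. For such $\varphi\in W$, the second bullet of Theorem \ref{thm:HD.coerciveB} gives $S_{\disc_m}^B(\varphi)\le Ch_m\norm{\varphi}{W}\to 0$ since $h_m\to 0$. This establishes \eqref{def:cons}. Limit-conformity is handled the same way: by Remark \ref{rem:dens} it is enough to test \eqref{def:lc} on a dense subset of $\wdspace(\O)\subset L^2(\O)^{d\times d}$, and $H^2(\O)^{d\times d}$ is such a dense subset (indeed $C^\infty$ matrix fields are dense in $L^2(\O)^{d\times d}$ and lie in $H^2(\O)^{d\times d}$, and $H^2(\O)^{d\times d}\subset \wdspace(\O)$ since $\hessian:B^\tau B\xi\in L^2(\O)$ whenever $\xi\in H^2$). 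For $\xi\in H^2(\O)^{d\times d}$ the third bullet gives $W_{\disc_m}^B(\xi)\le Ch_m\norm{\xi}{H^2(\O)^{d\times d}}\to 0$, which is \eqref{def:lc}.

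The only genuinely substantive point, and thus the main obstacle, is making sure the density arguments are correctly applied: one must check that $W$ is dense in $H^2_0(\O)$ (guaranteed by \assum{2}, since dense embedding in $H^3\cap H^2_0$ forces density in $H^2_0$ because $H^3\cap H^2_0$ is dense in $H^2_0$), and that $H^2(\O)^{d\times d}$ is dense in $\wdspace(\O)$ for the $L^2$ topology in which consistency/limit-conformity of coercive sequences may be tested. Both are routine, and once they are in place the estimates of Theorem \ref{thm:HD.coerciveB} plus $h_m\to 0$ close the argument with no further computation.
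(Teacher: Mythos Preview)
Your approach is correct and is exactly what the paper intends: the corollary is stated as a ``straightforward consequence of Remark~\ref{rem:dens}'' (together with Theorem~\ref{thm:HD.coerciveB}), and you have simply spelled out that deduction. One small imprecision: the density reduction in Remark~\ref{rem:dens} requires density in the natural graph norm of $\wdspace(\O)$ (namely $\norm{\xi}{}+\norm{\hessian:B^\tau B\xi}{}$), not merely in $L^2$; your $L^2$-density argument for $H^2(\O)^{d\times d}$ does not quite establish this, though the needed graph-norm density of smooth tensors is a standard fact.
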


\medskip

\begin{proof}[Proof of Theorem \ref{thm:HD.coerciveB}]~\smallskip

$\bullet$ \textsc{Coercivity}: Let $v\in X_{\disc,0}$. Noticing that $|a\otimes b|=|a| |b|$ for any two vectors $a$ and $b$, the definition of $\hbd$, Property \eqref{coer:Bs} of $B$ and $|\stab|\ge 1$
yield
\begin{align}
\norm{\hbd v}{}^2\ge{}&C_B^2\int_\O \left|\nabla(Q_h\nabla v)+\stab_h\otimes (Q_h\nabla v-\nabla v)\right|^2\d\x\nonumber\\
={}&C_B^2\int_\O |\nabla (Q_h \nabla v)|^2 \d\x +C_B^2\int_{\O}^{}|\stab_h\otimes (Q_h\nabla v-\nabla v)|^2\d\x\nonumber \\ 
&+2C_B^2\int_{\O}^{}\nabla (Q_h \nabla v):\stab_h\otimes (Q_h\nabla v-\nabla v)\d\x \nonumber\\
\ge{}& C_B^2\left(\norm{\nabla (Q_h \nabla v)}{}^2+\norm{Q_h\nabla v-\nabla v}{}^2\right)\nonumber \\
& +2C_B^2\sum_{\cell \in \mesh}^{}\int_{\cell}\nabla (Q_h \nabla v):\stab_h\otimes (Q_h\nabla v-\nabla v)\d\x.\nonumber
\end{align}
Since $\nabla (Q_h \nabla v)_{|\cell} \in \nabla V_h(\cell)^d $, a use of property $\assum{5}$ 
shows that the last term vanishes, and we have thus
\be\label{seminorm_qh0}
\norm{\hbd v}{}^2\ge C_B^2 \left(\norm{\nabla (Q_h \nabla v)}{}^2+\norm{Q_h\nabla v-\nabla v}{}^2\right),
\ee
which implies
\be \label{seminorm_qh}
C_B^{-1}\sqrt{2}\norm{\hbd v}{}\ge  \norm{\nabla (Q_h \nabla v)}{}+\norm{Q_h\nabla v-\nabla v}{}.
\ee
Apply now the Poincar\'{e} inequality twice, the triangle inequality and \eqref{seminorm_qh} to obtain
\begin{align}
\norm{\Pi_\disc v}{}=\norm{v}{}\le{}& {\rm diam}(\O)\norm{\nabla v}{}\nonumber\\
\le{}& {\rm diam}(\O)\norm{\nabla v-Q_h\nabla v}{}+{\rm diam}(\O)\norm{Q_h\nabla v}{}\nonumber\\
\le{}& {\rm diam}(\O)\norm{\nabla v-Q_h\nabla v}{}+{\rm diam}(\O)^2\norm{\nabla (Q_h\nabla v)}{}\nonumber\\
\le{}& C_B^{-1}\sqrt{2}\max({\rm diam}(\O),{\rm diam}(\O)^2)
\norm{\hbd v}{}.\label{coercive_gradrec1}
\end{align}
From \eqref{seminorm_qh0} and the Poincar\'{e} inequality, we also have
\be \label{coercive_gradrec2}
\norm{\nabla_\disc v}{}=\norm{Q_h\nabla v}{}
\le {\rm diam}(\O)\norm{\nabla(Q_h\nabla v)}{}
\le {\rm diam}(\O)C_B^{-1}\norm{\hbd v}{}.
\ee
Estimates \eqref{coercive_gradrec1} and \eqref{coercive_gradrec2} show that $C_\disc^B\le 
C_B^{-1}\sqrt{2}\max({\rm diam}(\O),{\rm diam}(\O)^2)$.

\smallskip

$\bullet$ \textsc{Consistency}: let $\varphi \in W\subset H^3(\O)\cap H^2_0(\O)$ and  choose $v=I_h\varphi \in X_{\disc,0}$. 
Using the properties \assum{0} (which implies $\norm{I_h\varphi-\varphi}{}\le Ch\norm{\varphi}{H^2(\O)}$ by the Poincar\'e inequality) and \assum{2}, we obtain
\be \label{consistency_gradrec1}
\norm{\Pi_\disc v-\varphi}{}=\norm{I_h\varphi-\varphi}{}  \le Ch\norm{\varphi}{{H^2}(\O)}
\ee
and
\be \label{consistency_gradrec2}
\norm{\nabla_\disc v-\nabla \varphi}{}=\norm{Q_h\nabla I_h\varphi-\nabla \varphi}{}\le Ch^2\norm{\varphi}{W}.
\ee
Let us now turn to $\norm{\hbd v-\hb \varphi}{}$. Observe that $\nabla\nabla$ is another notation for $\hessian$. Using a triangle inequality, the boundedness of $B$ and $\stab_h$
implies
\begin{align}
\norm{\hbd v-\hb \varphi}{}={}&\norm{B\left[\nabla (Q_h \nabla v)+\stab_h\otimes (Q_h\nabla v-\nabla v)\right]-B\hessian \varphi}{}\nonumber \\
\le{}& \norm{B\left[\nabla (Q_h \nabla v)-\nabla\nabla \varphi\right]}{} + \norm{B\stab_h\otimes (Q_h\nabla v-\nabla v)}{} \nonumber \\\
\le{}& C \underbrace{\norm{\nabla (Q_h \nabla v)-\nabla\nabla \varphi}{}}_{A_1} +C\underbrace{\norm{Q_h\nabla v-\nabla v}{}}_{A_2}. \label{consistency_gradrec}
\end{align}
Introducing the term $\nabla (Q_h \nabla \varphi)$, using in sequence the triangle inequality, the inverse inequality in \assum{0}, \assum{3}, the projection property of $Q_h$, \assum{1} and \assum{2}, we get
\begin{align}
A_1&\le \norm{\nabla [Q_h \nabla v-Q_h \nabla \varphi]}{}+\norm{\nabla (Q_h \nabla \varphi)-\nabla\nabla \varphi}{}\nonumber\\
&\le Ch^{-1}\norm{ Q_h \nabla v-Q_h \nabla \varphi}{}+Ch\norm{\nabla \varphi}{H^2(\O)} \nonumber \\
&\le Ch^{-1}\norm{Q_h \left( Q_h \nabla v-\nabla \varphi\right)}{}+Ch\norm{\nabla \varphi}{H^2(\O)} \nonumber \\
&\le  Ch^{-1}\norm{ Q_h \nabla I_h\varphi-\nabla \varphi}{}+Ch\norm{\nabla \varphi}{H^2(\O)} \le Ch 
\norm{\varphi}{W}.\label{consistency_gradrec:a1}
\end{align}
To estimate $A_2$, we use the properties  \assum{2} and \assum{0}:
\be
A_2 \le \norm{Q_h\nabla v-\nabla \varphi}{}+\norm{\nabla \varphi-\nabla v}{} 
\le Ch^2\norm{\varphi}{W}+Ch\norm{\varphi}{H^2(\O)}.\label{consistency_gradrec:a2}
\ee
The estimate on $S^B_\disc(\varphi)$ follows from \eqref{consistency_gradrec1}--\eqref{consistency_gradrec:a2}.

\smallskip

$\bullet$ \textsc{Limit-conformity}: for $ \xi \in H^2(\O)^{d\times d}$ and $v \in X_{\disc,0},$
\begin{align}
\int_\O \Big((\hessian:B^{\tau}B\xi)\Pi_\disc v {}&- B\xi:\hbd v\Big)\d\x\nonumber\\
={}&\underbrace{\int_\O \Big((\hessian:B^{\tau}B\xi)\Pi_\disc v - B\xi:B\nabla (Q_h \nabla v)\Big)\d\x}_{B_1}\nonumber
\\
&-\underbrace{\int_\O B \xi:B\stab_h\otimes (Q_h\nabla v-\nabla v)\d\x}_{B_2}. \label{limitconformity_gradrec12}
\end{align}
Recall that $v=\Pi_\disc v$ and $A=B^\tau B$. Since $Q_h\nabla v\in H^1_0(\O)$, Lemma \ref{IBP} applied to $(\hessian:A\xi)v$ and an integration-by-parts on $B\xi:B\nabla (Q_h \nabla v)=A\xi:\nabla(Q_h\nabla v)$ show that, for any $\mu_h\in N_h=[(Q_h\nabla-\nabla)(V_h)]^\perp$,
\begin{align}
|B_1|&=\Big\lvert\int_\O(\hessian:A\xi) v\d\x +\int_\O Q_h \nabla v\cdot\mbox{div}(A\xi)\d\x\Big\rvert \nonumber\\
&=\Big\lvert\int_\O(Q_h \nabla v-\nabla v)\cdot\mbox{div}(A\xi)\d\x\Big\rvert\nonumber\\
&=\Big\lvert\int_\O(Q_h \nabla v-\nabla v)\cdot(\mbox{div}(A\xi)-\mu_h)\d\x\Big\rvert\nonumber\\
&\le\norm{Q_h \nabla v-\nabla v}{}\norm{\mbox{div}(A\xi)-\mu_h}{}.
\label{limitconf_gradrec_inf.b1}
 \end{align}
Take the infimum over all $\mu_h \in N_h$. Estimate \eqref{seminorm_qh} and Property \assum{4} yield
\be \label{limitconformity_gradrec1}
|B_1|\le Ch\norm{\hbd v}{}\norm{\mbox{div}(A\xi)}{H^1(\O)^d}.
 \ee
Let $\xi_\cell$ denote the average of $\xi$ over $\cell \in \mesh$. By the mesh regularity assumption, $\norm{\xi-\xi_\cell}{L^2(\cell)^{d\times d}}\le Ch\norm{\xi}{H^1(\cell)^{d\times d}}$ (see, e.g., \cite[Lemma B.6]{koala}).
Moreover, since $V_h$ contains the piecewise constant functions, $\nabla V_h(K)$ contains
the constant vector-valued functions on $K$ and thus, by the orthogonality condition in \assum{5}, the Cauchy-Schwarz inequality, the boundedness of $B$ and $\stab_h$, and \eqref{seminorm_qh},
\begin{align}
|B_2|={}&\Big\lvert\sum_{\cell \in \mesh}^{}\int_\cell B^\tau B\xi:\stab_h\otimes (Q_h\nabla v-\nabla v)\d\x\Big\rvert\nonumber\\
={}&\Big\lvert\sum_{\cell \in \mesh}^{}\int_\cell(B^\tau B\xi-B^\tau B\xi_\cell):\stab_h\otimes (Q_h\nabla v-\nabla v)\d\x\Big\rvert\nonumber\\
\le{}& C \sum_{\cell \in \mesh}^{}\norm{\xi-\xi_\cell}{L^2(\cell)}\norm{Q_h\nabla v-\nabla v}{L^2(\cell)}\nonumber\\
\le{}& Ch\norm{\xi}{H^1(\O)^{d\times d}}\norm{\hbd v}{}. \label{limitconformity_gradrec2}
\end{align}
Plugging \eqref{limitconformity_gradrec1} and \eqref{limitconformity_gradrec2} into \eqref{limitconformity_gradrec12} yields
\begin{multline*}
\Big\lvert\int_\O \Big((\hessian:B^{\tau}B\xi)\Pi_\disc v - B\xi:\hbd v\Big)\d\x \Big\rvert\\
\le Ch\Big(\norm{\mbox{div}(A\xi)}{H^1(\O)^d}+\norm{\xi}{H^1(\O)^{d\times d}}\Big)\norm{\hbd v}{}.
\end{multline*}
By the definition \eqref{def.WD} of $W_\disc^B(\xi)$, this concludes the proof of the estimate
on this quantity.
\end{proof}

\subsection{A gradient recovery operator based on biorthogonal systems}\label{sec:biorth}

We present here a particular case of a method based on a gradient recovery operator, using biorthogonal systems as in \cite{BL_FEM}. $V_h$ is the conforming $\P1$ FE space on a mesh of simplices, and $I_h$ is the Lagrange interpolation with respect to vertices of $\mesh$.
We will build a locally computable projector $Q_h$, that is, such that determining $Q_h f$ on a cell $K$ only requires the knowledge of $f$ on $K$ and its neighbouring cells.

Let ${\mathcal B}_1:=\{\phi_1,\cdots,\phi_n\}$ be the set of 
basis functions of $V_h$ associated with the inner vertices in $\mesh$. Let 
the set ${\mathcal B}_2:=\{\psi_1,\cdots,\psi_n\}$  be the set of discontinuous piecewise linear functions 
biorthogonal to ${\mathcal B}_1$ also associated with the inner vertices of $\mesh$,
so that elements of ${\mathcal B}_1$ and 
${\mathcal B}_2$ satisfy the biorthogonality relation
\be \label{biorth}
  \int_{\Omega} \psi_i  \phi_j \d\x = c_j \delta_{ij},
\; c_j\neq 0,\; 1\le i,j \le n,
\ee
 where $\delta_{ij}$ is 
 the Kronecker symbol and $c_j = \int_{\Omega}\psi_j \phi_j \,d\x.$
 Let  $M_h:=\text{span}\{{\mathcal B}_2\}$. 
 Such biorthogonal systems have been constructed in the context of 
 mortar finite elements, and later extended to gradient recovery operators
\cite{KLP01,Lam11b,BL_FEM}.
The basis functions of $M_h$ can be defined on a reference element. 
For example, for the reference triangle, we have 
\[
  \hat \psi_1(\x):=3-4x_1-4x_2,\quad
  \hat\psi_2(\x):=4x_1-1,\quad\text{and}\quad
  \hat\psi_3(\x):=4x_2-1,
\]
associated with its three vertices $(0,0)$, $(1,0)$ and $(0,1)$, 
respectively. For the refe\-rence tetrahedron, we have 
\begin{align*}
  &\hat \psi_1(\x):=4-5x_1-5x_2-5x_3,\quad
  \hat\psi_2(\x):=5x_1-1,\\
  &\hat\psi_3(\x):=5x_2-1,\quad\text{and}\quad
\hat\psi_4(\x):=5x_3-1,
\end{align*}
associated with its four vertices 
$(0,0,0)$, $(1,0,0)$, $(0,1,0)$ and $(0,0,1)$, respectively.  
These basis functions satisfy 
\begin{equation}\label{constantrep} 
\sum_{i=1}^{d+1} \hat \psi_i=1.
\end{equation}

The projection operator $Q_h : L^2(\Omega) 
\rightarrow V_h$  is the oblique projector onto $V_h$ defined as: for $f\in L^2(\O)$,
$Q_hf\in V_h$ satisfies
\begin{equation}\label{qdef}
\int_{\Omega} (Q_hf)\, \psi_h\d\x = \int_{\Omega} f\,\psi_h\d\x,\quad \forall\psi_h \in M_h.
\end{equation}
Due to the biorthogonality relation \eqref{biorth}, $Q_h$ is well-defined and has the explicit representation
\begin{equation}\label{eq2}
 Q_hf = \sum_{i=1}^n\frac{\int_{\Omega} \psi_i \,f\d\x}{c_i} \phi_i.
\end{equation}

The relation \eqref{qdef} shows $M_h\subset [(Q_h-I)(L^2(\O))]^\bot$. Hence, if $M_h$ satisfies the approximation property 
\[ \inf_{\alpha_h\in  M_h}\norm{\alpha_h-\psi}{}\le Ch\norm{\psi}{H^1(\O)}, \quad  \forall 
\psi\in H^1(\O),\] we know that \assum{4} holds.  
In order to get this approximation property it is sufficient that the basis functions of $M_h$ reproduce constant functions. 
Let $K \in \mesh$ be an interior element not touching any boundary vertex. 
Due to the property \eqref{constantrep} 
\[  \sum_{i=1}^{d+1} \psi_{\vertex_i} = 1 
\quad\text{on}\quad K,
\] 
where  $\{\psi_{\vertex_i} \}_{i=1}^{d+1}$ are  
basis functions of  $M_h$ associated 
with the vertices $(\vertex_1,\ldots,\vertex_{d+1})$ of $K$.

However, this property 
does not hold on $K\in \mesh$ if $K$ has one or more vertices on the boundary. 
We need to modify  the piecewise linear basis functions of 
$M_h$  to guarantee the approximation property
\cite{LSW05,Lam06}.  Let  $W_h \subset H^1(\Omega)$ be 
the lowest order FE space including the basis functions on the boundary vertices of $\mesh$, and let 
$\widetilde M_h$ the space spanned by the discontinuous basis functions 
biorthogonal to the basis functions of $W_h$. 
$M_h$ is then obtained as a modification of $\widetilde M_h$, by moving
all vertex basis functions of this latter space to 
 nearby internal vertices using the following three steps. 
\begin{enumerate}
\item For a basis function $\widetilde \psi_k$  of $\widetilde M_h$ associated with a vertex $\vertex_k$ 
on the boundary we find a closest \emph{internal} triangle or tetrahedron $K \in \mesh$
(that is, $K$ does not have a boundary vertex).
\item Compute the barycentric coordinates $\{\alpha_{K,i} \}_{i=1}^{d+1}$ 
 of $\vertex_k$ with respect to the vertices of $K$, and modify all the basis functions
$\{\widetilde \psi_{K,i}\}_{i=1}^{d+1}$ of $\widetilde M_h$ associated
with $K$ into $  \psi_{K,i}= \widetilde \psi_{K,i} + \alpha_{K,i} \widetilde \psi_k$ for $i=1,\cdots, d+1$.
\item Remove $\widetilde \psi_k$ from the basis of $\widetilde M_h$.
\end{enumerate}

An alternative way is to modify the basis functions of all triangles or tetrahedra having one or more 
boundary vertices  as proposed in \cite{KLP01}.
 \begin{enumerate}
 \item If all vertices $\{\vertex_i\}_{i=1}^{d+1}$ of an element  $K\in \mesh$ are inner vertices, then the 
linear basis functions 
$\{\psi_{\vertex_i}\}_{i=1}^{d+1}$ of  $M_h$  on $K$ 
are defined using  the biorthogonal relationship \eqref{biorth} 
with the basis functions $\{\phi_{\vertex_i}\}_{i=1}^{d+1}$ of  $V_h$. 
\item If an element $K \in \mesh$ has all boundary vertices, then 
we find a neighbouring element $\widetilde K$, which has at least one inner
vertex $\vertex$, and we extend the support of the basis function $\psi_{\vertex} \in M_h$
associated with $\vertex$ to the element $K$ by defining 
$\psi_{\vertex} =1$ on $K$.
\item If an element $K \in \mesh$ has only one inner vertex $\vertex$
 and other boundary vertices,   then the basis function  $\psi_{\vertex} \in M_h$ associated with the inner vertex $\vertex$ 
is defined as $\psi_{\vertex} =1$ on $K$.
\item If an element $K$ has two inner vertices $\vertex_1$ and $\vertex_2$ and other boundary vertices, then the basis functions 
$\psi_{\vertex_1}, \, \psi_{\vertex_2} \in M_h$ associated with these points are chosen to satisfy the 
biorthogonal  relationship \eqref{biorth} with $\phi_{\vertex_1}, \, \phi_{\vertex_2} \in V_h$,
as well as the property $\psi_{\vertex_1} + \psi_{\vertex_2} = 1$ on $K$. 

\item In the three-dimensional case, we can have 
an element $K$ with three inner vertices $\{\vertex_i\}_{i=1}^3$ and one boundary vertex.  In this case we define three basis functions 
$\{\psi_{\vertex_i}\}_{i=1}^3$ to satisfy the biorthogonal relationship \eqref{biorth} with $\{\phi_{\vertex_i}\}_{i=1}^3$
 as well as the condition $\sum_{i=1}^3 \psi_{\vertex_i} = 1$ on $K$.
\end{enumerate}
The projection $Q_h$ is stable in $L^2$ and $H^1$-norms \cite{Lam11b}, and hence 
assumption \assum{1} follows. To establish \assum{2}, we need the following mesh assumption.
 \begin{itemize}
	\item[\textbf{(M)}]For any vertex $\vertex$, denoting by $\mesh_{\vertex}$ the set of cells having $\vertex$ as
a vertex,
  \[
	\sum\limits_{K \in \mesh_\vertex} \frac{|K|}{|S_\vertex|}(\overline{\x}_K-\vertex) = O(h^{2}),
	\] 
where $S_\vertex$ is the support of the 
basis function $\phi_\vertex$ of $V_h$ associated with $\vertex$.
\end{itemize}
This assumption is satisfied if the triangles of the mesh can be paired
in sets of two that share a common edge and form an $O(h^2)$-parallelogram, that is,
the lengths of any two opposite edges differ only by $O(h^2)$.
In three dimensions, \textbf{(M)} is satisfied if the lengths of each pair of opposite edges of a given element are allowed to differ only by $O(h^2)$ \cite{CWD14}.
The following theorem establishes \assum{2} with $W=W^{3,\infty}(\Omega)\cap H^2_0(\O)$ and can be proved as in \cite{XZ04,Lam11b}.

\begin{theorem} \label{thnew2}
Let $ u \in W^{3,\infty}(\Omega)\cap H^2_0(\O)$. 
Assume that the triangulation satisfies the assumption {\rm\textbf{(M)}}.
Then \[ 
\norm{ Q_h \nabla I_h u - \nabla u}{} \leq 
C h^2 \|u\|_{W^{3,\infty}(\Omega)}
.\]
\end{theorem}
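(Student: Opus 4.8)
The plan is to reduce the global $L^2$ estimate to a sum of local contributions, and on each cell to exploit a superconvergence phenomenon: on interior patches, $Q_h\nabla I_h u$ agrees with $\nabla u$ up to $O(h^2)$ because the piecewise-constant error $\nabla I_h u - \nabla u$ has mean value against the biorthogonal basis functions that cancels to second order, thanks to the mesh condition \textbf{(M)}. Concretely, I would first write $Q_h\nabla I_h u - \nabla u = (Q_h\nabla I_h u - Q_h\nabla u) + (Q_h \nabla u - \nabla u)$. The second term is $O(h^2)\|u\|_{W^{3,\infty}}$ by a standard superconvergent-recovery estimate for the oblique (biorthogonal) projector $Q_h$ applied to the smooth field $\nabla u \in W^{2,\infty}(\O)^d$ — this is exactly the content of the references \cite{XZ04,Lam11b}, using that $Q_h$ reproduces linears on interior patches (via \eqref{constantrep} and the modifications near the boundary) and that, by \textbf{(M)}, the weighted barycentre of each vertex patch is within $O(h^2)$ of the vertex. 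For the first term, stability of $Q_h$ in $L^2$ (assumption \assum{1}, from \cite{Lam11b}) gives $\|Q_h\nabla I_h u - Q_h\nabla u\| \le C\|\nabla I_h u - \nabla u\|$, but this is only $O(h)$ and is \emph{not} good enough on its own; the gain must come from the structure of $Q_h$.

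The better route for the first term is to use the explicit nodal representation \eqref{eq2}: for each interior vertex $\vertex$,
\[
(Q_h(\nabla I_h u - \nabla u))(\vertex) \;=\; \frac{1}{c_\vertex}\int_\O \psi_\vertex\,(\nabla I_h u - \nabla u)\d\x
\;=\;\frac{1}{c_\vertex}\sum_{K\in\mesh_\vertex}\int_K \psi_{\vertex}\,(\nabla I_h u - \nabla u)\d\x.
\]
Since $\nabla I_h u - \nabla u$ restricted to $K$ is $\nabla(I_h u - u)$, and since $u \in W^{3,\infty}$, on each $K$ one has $\nabla(I_h u - u) = -\,\tfrac12(\x-\overline{\x}_K)^{\tau}\,\hessian u(\overline{\x}_K) + O(h^2)\|u\|_{W^{3,\infty}}$ (a Taylor expansion of the linear-interpolation error about the barycentre; the linear-in-$\x$ part averages against $\psi_\vertex$ to produce the weighted barycentre appearing in \textbf{(M)}). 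Summing over $K \in \mesh_\vertex$, using $\sum_K \int_K \psi_\vertex = c_\vertex$ (from the constant-reproduction property of the $\psi$'s on interior patches) and the fact that $c_\vertex \sim |S_\vertex| \sim h^d$, the first-order term collapses to a multiple of $\sum_{K\in\mesh_\vertex}\frac{|K|}{|S_\vertex|}(\overline{\x}_K-\vertex)$ contracted with $\hessian u$, which is $O(h^2)\|u\|_{W^{3,\infty}}$ by \textbf{(M)}. Hence $\|Q_h(\nabla I_h u - \nabla u)\|_{L^\infty}$, and therefore its $L^2$ norm, is $O(h^2)\|u\|_{W^{3,\infty}}$ on the interior. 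The boundary layer (cells touching $\partial\O$) is handled separately: there are $O(h^{-(d-1)})$ such cells each of measure $O(h^d)$, so an $O(h)$ pointwise bound there — available from the crude stability estimate — contributes $O(h)\cdot(h^{-(d-1)}h^d)^{1/2} = O(h^{3/2})$, which is absorbed; here one uses the modified boundary basis functions described after \eqref{constantrep} and $u = |\nabla u| = 0$ on $\partial\O$.

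Combining the two terms yields $\|Q_h\nabla I_h u - \nabla u\| \le Ch^2\|u\|_{W^{3,\infty}(\O)}$. The main obstacle is the first term: the naive stability bound loses a power of $h$, so the argument genuinely relies on exhibiting the cancellation in the nodal averages $\int_\O \psi_\vertex \nabla(I_h u - u)$ and matching it precisely to the quantity controlled by \textbf{(M)}; getting the bookkeeping right for vertex patches that abut the boundary (where constant-reproduction of the $\psi$'s must be restored by the modification procedure, and where the homogeneous boundary data of $u$ is used) is the delicate point. Since this is a known superconvergence result, I would present the interior estimate in detail and refer to \cite{XZ04,Lam11b} for the boundary-patch modifications.
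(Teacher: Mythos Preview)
The paper does not give its own proof of this theorem; it only states that the result ``can be proved as in \cite{XZ04,Lam11b}''. So there is no in-paper argument to compare against, and your proposal should be judged on whether it would reproduce the cited superconvergence proof.

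The overall strategy---evaluate $Q_h\nabla I_h u$ at each vertex via the explicit formula \eqref{eq2}, Taylor-expand, and use assumption \textbf{(M)} to kill the $O(h)$ term---is correct in spirit, but two concrete gaps remain. First, your Taylor formula $\nabla(I_hu-u)_{|K}=-\tfrac12(\x-\overline{\x}_K)^\tau\hessian u(\overline{\x}_K)+O(h^2)$ is wrong: $\nabla I_h u$ is \emph{constant} on $K$, so the leading error has two $O(h)$ pieces, the constant $\nabla I_h q_{|K}$ (with $q$ the quadratic Taylor term) and the varying $-\hessian u(\overline{\x}_K)(\x-\overline{\x}_K)$. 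Both must be handled, and it is the patch-averaged constant part that actually reduces to the quantity in \textbf{(M)}. Second, and more seriously, your boundary argument yields only $O(h^{3/2})$, which is \emph{not} ``absorbed'' into an $O(h^2)$ bound---it dominates. The fix is to exploit that $Q_h\nabla I_h u$ and $\nabla u$ \emph{both} vanish at boundary vertices (the former because $V_h\subset H^1_0(\Omega)$, the latter because $u\in H^2_0(\Omega)$), so the nodal error $(Q_h\nabla I_h u)(\vertex)-\nabla u(\vertex)$ is $O(h^2)$ at every vertex. One then writes $Q_h\nabla I_h u-\nabla u=(Q_h\nabla I_h u-I_h\nabla u)+(I_h\nabla u-\nabla u)$: the first term is piecewise linear with $O(h^2)$ nodal values, and the second is $O(h^2)$ by standard $\mathbb P_1$ interpolation of $\nabla u\in W^{2,\infty}\cap H^1_0$. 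Finally, note that in your split the term $Q_h\nabla u-\nabla u$ is already $O(h^2)$ by plain $L^2$-stability of $Q_h$ and approximation in $V_h$; \textbf{(M)} is needed only for the other term.
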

Since $Q_h$ is a projection onto $V_h$, $Q_hI_h=I_h$. Hence, for $w\in H^2(\O)\cap H^1_0(\O)$,
introducing $Q_h I_h w=I_hw$ and invoking the $H^1$-stability property of $Q_h$ \cite[Lemma 1.8]{Lam06} leads to
\[
\norm{\nabla Q_h w-\nabla w}{}\le \norm{\nabla Q_h (w-I_hw)}{} 
+\norm{\nabla I_h w-\nabla w}{}\le C \norm{\nabla I_h w-\nabla w}{}.
\]
The standard approximation properties of $V_h$ then guarantee \assum{3}.
The Assumption \assum{4} is satisfied since $M_h\subset N_h$ (${M}_h$ is obtained by combining functions in $\widetilde M_h$, that satisfies this property) and the basis functions of $M_h$ locally reproduce constant functions.
To build $\stab_h$ that satisfies \assum{5}, divide each triangle $K \in \mesh$ into four equal triangles using the mid-points of each side, and define $\stab_h$ as a piecewise constant function as described in Figure \ref{fig:stab}. It can be checked that this function satisfies \assum{5}. A similar construction also works on tetrahedra (in which case $\stab_{h|K}$ is equal to $1$ on the four sub-tetrahedra constructed around the vertices of $K$, and $-4$ in the rest of $K$).

\begin{figure}[h!]
\input{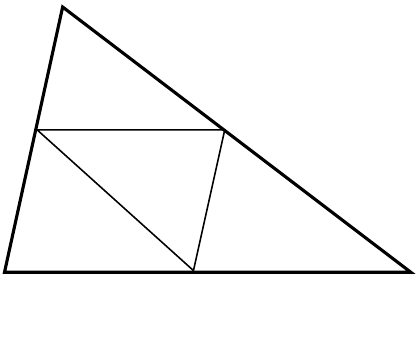_t}
\caption{Values of the stabilisation function $\stab_h$ inside a cell $K$.}
\label{fig:stab}
\end{figure}

\section{Finite volume method based on $\Delta$-adapted discretizations}\label{sec.fvm}

We consider here the finite volume (FV) scheme from \cite{biharmonicFV} for the biharmonic problem \eqref{biharmonic} on $\Delta$-adapted meshes, that is, meshes that satisfy an orthogonality property.

\begin{definition}[$\Delta$-adapted FV mesh]\label{FV.def}
	A general mesh $\polyd$ is $\Delta$-adapted if
	\begin{enumerate}
		\item for all  $\edge \in \edgesint$, denoting by $\cell, L \in \mesh$ the cells such that $\mesh_{\edge}= \{\cell, L\}$, the straight line $(\x_\cell, \x_L)$ intersects and is orthogonal to $\edge$,
		\item for all $\edge \in \edgesext$ with $\mesh_{\edge}= \{\cell\}$, the line orthogonal to $\edge$ going through $\x_\cell$ intersects $\edge$.
	\end{enumerate}
\end{definition}
For such a mesh, we let $D_{\cell,\edge}$ be the cone with vertex $\x_\cell$ and basis $\edge$, and $D_{\edge}=\bigcup_{\cell \in \mesh_{\edge}}D_{\cell,\edge}$.
For each $\edge \in \edgesint$, an orientation is chosen by defining one of the two unit normal vectors $n_{\edge}$, and we denote by $\cell^-_{\edge}$ and $\cell^+_{\edge}$ the two adjacent control volumes such that $n_{\edge}$ is oriented from $\cell^-_{\edge}$ to $\cell^+_{\edge}$.
For all $\edge \in \edgesext$, we denote the control volume $\cell \in \mesh$ such that $\edge \in \edgescv$ by $\cell_{\edge}$ and we define $n_{\edge}$ by $n_{\cell,\edge}$. We then set
\begin{equation}
d_\edge=\left\{
\begin{array}{ll}
\dist(\x_{\cell_\edge^-},\edge) + \dist(\x_{\cell_\edge^+},\edge) & \forall \edge \in \edgesint \\
\dist(\x_{\cell},\edge)& \forall \edge \in \edgesext.
\end{array}\right.
\label{dsigma}
\end{equation} 
For all $\cell \in \mesh$, set $\edges_{\cell,\rm{int}} = \edgescv \cap \edgesint$ and $\edges_{\cell,\rm{ext}} = \edgescv \cap \edgesext$. Finally, we define the mesh regularity factor by
$$\theta_\polyd = \max\Bigg\{\max\left(\frac{\mbox{diam}(\cell)}{\dist(\x_\cell,\edge)}, \frac{d_\edge}{\dist(\x_\cell,\edge)}\right)\,;\, \cell \in \mesh, \edge \in \edgescv \Bigg\}.$$

We now define a notion of $B$--Hessian discretisation for $B=\frac{{\rm tr}(\cdot)}{\sqrt{d}}{\rm Id}$, in which case \eqref{weak} corresponds to the biharmonic problem \eqref{biharmonic}, for which the coercivity property \eqref{coer:B} holds (see Section \ref{sec:bihar}).

\begin{definition}[$B\textendash$Hessian discretisation based on $\Delta$-adapted discretisation]\label{def.HD.Deltaadapted}
	Let $B=\frac{{\rm tr}(\cdot)}{\sqrt{d}}{\rm Id}$ and $\polyd$ be a $\Delta$-adapted mesh. A $B\textendash$Hessian discretisation is given by $\disc=(X_{\disc,0},\Pi_\disc,\nabla_\disc,\hbd)$ where
	\begin{itemize}
		\item $X_{\disc,0}$ is the space of all real families $u_\disc=(u_\cell)_{\cell \in \mesh}$, such that $u_\cell=0$ for all $\cell \in \mesh$ with $\edges_{\cell,\rm ext} \ne \emptyset$.
		\item For $u_\disc \in X_{\disc,0}$, $\Pi_\disc u_\disc$ is the piecewise constant function equal to $u_\cell$ on the cell $\cell$. 
		\item The discrete gradient $\nabla_\disc u_\disc$ is defined by its constant values on the cells:
		\begin{equation}
		\nabla_\cell u_\disc=\frac{1}{|\cell|}\sum_{\edge \in \edges_\cell}^{}\frac{|\edge|(\delta_{\cell,\edge} u_\disc)(\centeredge-\x_\cell)}{d_\edge}, \label{biharmonic_nabla}
		\end{equation}
		where \begin{equation}
		\delta_{\cell,\edge} u_\disc=\left\{\begin{array}{ll}
		u_L-u_\cell &\forall \,\edge \in \edges_{\cell,\rm{int}}\,,\; \mesh_{\edge}= \{\cell, L\} \\
		0 & \forall \, \edge \in \edges_{\cell,\rm{ext}}.
		\end{array}\right.
		\label{jump}
		\end{equation} 
		\item The discrete Laplace operator $\Delta_\disc$ is defined by its constant values on the cells:
		\begin{equation}
		\Delta_\cell u_\disc=\frac{1}{|\cell|}\sum_{\edge \in \edges_\cell}^{}\frac{|\edge|\delta_{\cell,\edge} u_\disc}{d_\edge}. \label{biharmonic_delta}
		\end{equation}
		We then set $\hbd u_\disc = \frac{\Delta_\disc u_\disc}{\sqrt{d}}{\rm Id}$.
	\end{itemize}
\end{definition}
For $u_\disc, v_\disc \in X_{\disc,0},$
\begin{equation}
[u_\disc,v_\disc]=\sum_{\edge \in \edges}^{}\frac{|\edge|\delta_{\edge}u_\disc\delta_{\edge} v_\disc}{d_\edge} \label{innerproduct}
\end{equation}
defines an inner product on $X_{\disc,0}$, whose associated norm is denoted by $\norm{u_\disc}{\disc}$. Here $\delta_{\edge}$ is given by
\begin{equation}
\delta_{\edge} u_\disc=
\left\{\begin{array}{ll}
u_{\cell^+_{\edge}}-u_{\cell^-_{\edge}}& \forall \,\edge \in \edgesint \\
0 & \forall \, \edge \in \edgesext.
\end{array}\right.\label{jump1}
\end{equation} 
It can easily be checked that, with this Hessian discretisation, the Hessian scheme \eqref{weak}
is the scheme of \cite{biharmonicFV} for the biharmonic equation.
Let us examine the properties of this Hessian discretisation.
\begin{theorem}\label{thm:HD.Deltaadapted}
	Let $\disc$ be a $B\textendash$Hessian discretisation in the sense of Definition \ref{def.HD.Deltaadapted}. Then there exists a constant $C$, depending only on on $\theta\ge \theta_\polyd$, such that
	\begin{itemize}
		\item  $C_\disc^B \le C$,
		\item  If $\varphi \in C^2_c(\Omega)$, $\Delta \varphi\in H^1(\Omega)$ and $a>0$ is such that ${\rm supp}(\varphi)\subset \{x\in\Omega\,;\,\dist(\x,\partial\Omega)>a\}$, then
			\begin{equation}\label{est.SD.Cc}
			S_{\disc}^B(\varphi) \le Ch\norm{\Delta\varphi}{H^1(\Omega)}+Ch \norm{\varphi}{C^2(\overline{\Omega})}
			\times\left\{\begin{array}{ll}|\ln(a)|a^{-3/2}&\mbox{ if $d=2$},\\
			a^{-5/3}&\mbox{ if $d=3$}.\end{array}\right.
			\end{equation}
		
		\item If $\varphi\in H^2_0(\Omega)\cap C^2(\overline{\Omega})$ with $\Delta\varphi\in H^1(\Omega)$, then
			\begin{equation}\label{est.SD.FV}
			S_{\disc}^B(\varphi) \le  Ch\norm{\Delta\varphi}{H^1(\Omega)} +
			C \norm{\varphi}{C^2(\overline{\Omega})}\times\left\{\begin{array}{ll}h^{1/4}|\ln(h)|&\mbox{ if $d=2$},\\
			h^{3/13}&\mbox{ if $d=3$}.\end{array}\right.
			\end{equation}
		
		\item $\forall\xi \in H^2(\O)^{d \times d}$, $W_{\disc}^B(\xi) \le Ch \norm{{\rm tr}(\xi)}{H^2(\O)}$.
	\end{itemize}
\end{theorem}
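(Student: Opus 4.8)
The plan is to reduce every quantity to the scalar discrete Laplacian $\Delta_\disc$. Since $B=\frac{{\rm tr}(\cdot)}{\sqrt d}{\rm Id}$, one checks directly that $\norm{\hbd v_\disc}{}=\norm{\Delta_\disc v_\disc}{}$, that $\hb\varphi=\frac{\Delta\varphi}{\sqrt d}{\rm Id}$ (so $\norm{\hbd v_\disc-\hb\varphi}{}=\norm{\Delta_\disc v_\disc-\Delta\varphi}{}$), and that $B\xi:\hbd v_\disc={\rm tr}(\xi)\,\Delta_\disc v_\disc$ and $\hessian:B^{\tau}B\xi=\Delta({\rm tr}\,\xi)$. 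The workhorse is the discrete Green formula
\[
\sum_{\cell\in\mesh}|\cell|\,\Delta_\cell u_\disc\,v_\cell=-[u_\disc,v_\disc]\qquad\big(u_\disc\in X_{\disc,0},\ v_\disc\ \text{any cell family}\big),
\]
obtained by reorganising over edges the sums defining $\Delta_\cell$ and using $\delta_{\cell,\edge}u_\disc=0$ on $\edges_{\cell,\mathrm{ext}}$; here $[\cdot,\cdot]$ is \eqref{innerproduct} and I write $\NORM{v_\disc}:=[v_\disc,v_\disc]^{1/2}$. The three estimates then follow respectively from a discrete Poincaré inequality, a double summation-by-parts, and a near-boundary consistency analysis of the two-point discrete Laplacian, the last being the real obstacle.

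\emph{Estimates on $C_\disc^B$ and $W_\disc^B$.} Taking $v_\disc=u_\disc$ in the Green formula and using Cauchy--Schwarz gives $\NORM{u_\disc}^2\le\norm{\Delta_\disc u_\disc}{}\norm{\Pi_\disc u_\disc}{}$; with the discrete Poincaré inequality $\norm{\Pi_\disc u_\disc}{}\le C\NORM{u_\disc}$, valid on $X_{\disc,0}$ because $u_\cell=0$ on cells touching $\partial\O$ (see \cite{koala}), this yields $\NORM{u_\disc}\le C\norm{\hbd u_\disc}{}$ and $\norm{\Pi_\disc u_\disc}{}\le C\norm{\hbd u_\disc}{}$. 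The bound $\norm{\nabla_\disc u_\disc}{}\le C\NORM{u_\disc}$ is the classical estimate for the finite-volume gradient reconstruction \eqref{biharmonic_nabla} (Cauchy--Schwarz together with $\sum_{\edge\in\edgescv}|\edge|\dist(\x_\cell,\edge)=d|\cell|$ and the regularity bound $d_\edge\le\theta\,\dist(\x_\cell,\edge)$), so $C_\disc^B\le C$; this also shows $\norm{\hbd\cdot}{}$ is a norm (if $\norm{\hbd u_\disc}{}=0$ then $\Delta_\cell u_\disc\equiv0$, hence $\NORM{u_\disc}=0$, and connectedness with $u_\cell=0$ on boundary cells forces $u_\disc=0$). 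For $W_\disc^B$, put $g={\rm tr}(\xi)\in H^2(\O)$, $g_\cell=\frac1{|\cell|}\int_\cell g$; two discrete integrations by parts (Green's formula for $\int_\O g\,\Delta_\disc u_\disc$, and the analogous reorganisation of $\int_\O(\Delta g)\Pi_\disc u_\disc=\sum_\cell u_\cell\int_{\partial\cell}\nabla g\cdot n$, where the boundary faces drop since $u_\cell=0$ there) give
\[
\int_\O\Big((\hessian:B^{\tau}B\xi)\Pi_\disc u_\disc-B\xi:\hbd u_\disc\Big)\d\x=-\sum_{\edge\in\edgesint}\delta_\edge u_\disc\,R_\edge,\qquad R_\edge=\int_\edge\nabla g\cdot n_\edge-\frac{|\edge|(g_{L}-g_{\cell})}{d_\edge}.
\]
The $R_\edge$ are the classical two-point-flux consistency defects, and the $\Delta$-adapted orthogonality of $(\x_\cell,\x_L)$ with $\edge$ makes available $\sum_{\edge\in\edgesint}\frac{d_\edge}{|\edge|}R_\edge^2\le Ch^2\norm{g}{H^2(\O)}^2$ (see \cite{biharmonicFV,koala}). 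Cauchy--Schwarz then bounds the right-hand side by $\NORM{u_\disc}\,Ch\norm{{\rm tr}(\xi)}{H^2(\O)}\le Ch\norm{{\rm tr}(\xi)}{H^2(\O)}\norm{\hbd u_\disc}{}$; dividing and taking the supremum gives the bound on $W_\disc^B(\xi)$.

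\emph{Estimate on $S_\disc^B$ — the crux.} For $\varphi\in C^2_c(\O)$ with $\Delta\varphi\in H^1(\O)$ and ${\rm supp}(\varphi)\subset\{\dist(\cdot,\partial\O)>a\}$, take the interpolant with $w_\cell=\varphi(\x_\cell)$ on interior cells, $w_\cell=0$ on boundary cells. Away from $\partial\O$, $\norm{\Pi_\disc w-\varphi}{}$ and $\norm{\nabla_\disc w-\nabla\varphi}{}$ are $O(h\norm{\varphi}{C^2})$ by Taylor expansion, pointwise-interpolation error and consistency of \eqref{biharmonic_nabla}, while $\norm{\Delta_\disc w-\Delta\varphi}{}\le\norm{\Delta_\disc w-\Pi_\disc(\text{cell averages of }\Delta\varphi)}{}+Ch\norm{\Delta\varphi}{H^1}$, the first term being the cell-summed flux defect $\big(\frac1{|\cell|}\sum_{\edge\in\edgescv}R_\edge\big)_\cell$. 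That defect is only \emph{weakly} consistent: it is not $O(h)$ in $L^2$ on general $\Delta$-adapted meshes, only via a duality/interpolation argument that plays the weighted bound on $\sum\frac{d_\edge}{|\edge|}R_\edge^2$ (from $\Delta$-adapted orthogonality) against broken-$H^1$ estimates on the test side, and near $\partial\O$ the operator $\Delta_\cell$ acts essentially like $h_\cell^{-2}$ on the neighbouring unknowns, so the loss concentrates in the layer of cells touching $\partial\O$. Balancing that stiffness against the second-order vanishing of $\varphi$ on $\{\dist=a\}$ and summing over the boundary layer produces the extra factor $|\ln a|a^{-3/2}$ ($d=2$), resp.\ $a^{-5/3}$ ($d=3$), and gives \eqref{est.SD.Cc}. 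This boundary-layer bookkeeping — reconciling the strong-$L^2$ consistency demanded by $S_\disc^B$ with the merely weak consistency available for two-point-flux discrete Laplacians — is where I expect the genuine difficulty to lie.

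\emph{From \eqref{est.SD.Cc} to \eqref{est.SD.FV}.} Given $\varphi\in H^2_0(\O)\cap C^2(\overline\O)$ with $\Delta\varphi\in H^1(\O)$, pick a smooth cut-off $\chi_a$ equal to $0$ in an $a$-collar of $\partial\O$ and to $1$ outside a $2a$-collar, with $|\nabla\chi_a|\lesssim a^{-1}$ and $|D^2\chi_a|\lesssim a^{-2}$, and set $\varphi_a=\chi_a\varphi\in C^2_c(\O)$, which is supported at distance $\gtrsim a$ from $\partial\O$. Since $\varphi$ and $\nabla\varphi$ vanish on $\partial\O$, on the collar we have $|\varphi|\le Ca^2\norm{\varphi}{C^2}$ and $|\nabla\varphi|\le Ca\norm{\varphi}{C^2}$, whence $\norm{\varphi-\varphi_a}{}+\norm{\nabla(\varphi-\varphi_a)}{}+\norm{\Delta(\varphi-\varphi_a)}{}\le Ca^{1/2}\norm{\varphi}{C^2}$, while $\norm{\varphi_a}{C^2}\le C\norm{\varphi}{C^2}$ and $\norm{\Delta\varphi_a}{H^1(\O)}\le C\big(\norm{\Delta\varphi}{H^1(\O)}+a^{-1/2}\norm{\varphi}{C^2}\big)$. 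The sub-additivity $S_\disc^B(\varphi)\le S_\disc^B(\varphi_a)+\norm{\varphi-\varphi_a}{}+\norm{\nabla(\varphi-\varphi_a)}{}+\norm{\Delta(\varphi-\varphi_a)}{}$, immediate from the definition \eqref{def.SD}, combined with \eqref{est.SD.Cc} applied to $\varphi_a$, gives $S_\disc^B(\varphi)\le Ch\norm{\Delta\varphi}{H^1}+C\big(h\Lambda_d(a)+a^{1/2}\big)\norm{\varphi}{C^2}$ with $\Lambda_2(a)=|\ln a|a^{-3/2}$, $\Lambda_3(a)=a^{-5/3}$; optimising, $a=h^{1/2}$ for $d=2$ and $a=h^{6/13}$ for $d=3$ balances the two $\varphi$-terms and yields exactly the rates $h^{1/4}|\ln h|$ and $h^{3/13}$ of \eqref{est.SD.FV}.
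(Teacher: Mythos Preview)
Your treatment of coercivity and limit-conformity is essentially the paper's argument (the paper uses point values $\phi(\x_\cell)$ rather than cell averages $g_\cell$ in the last step, but this is immaterial), and your reduction of the general case \eqref{est.SD.FV} to the compactly supported case \eqref{est.SD.Cc} by a cutoff $\chi_a$ and optimisation of $a$ also matches.

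The gap is in the compact-support consistency step. Taking $w_\cell=\varphi(\x_\cell)$ does \emph{not} give $\norm{\Delta_\disc w-\Delta\varphi}{}=O(h)$ on general $\Delta$-adapted meshes: the per-face flux defect is $O(h)$, but after dividing by $|\cell|$ and summing over $\edgescv$ the cellwise error $\Delta_\cell w-[\Delta\varphi]_\cell$ is generically only $O(1)$. You acknowledge this (``only weakly consistent'') and appeal to a ``duality/interpolation argument'', but $S_\disc^B$ demands a strong $L^2$ bound on $\hbd w-\hb\varphi$ for a \emph{specific} $w$; no duality is available here. Your explanation of the $a$-factors (``second-order vanishing of $\varphi$ on $\{\dist=a\}$'') is also off: $\varphi$ is identically zero near $\partial\O$, so there is nothing to balance there for the naive interpolant.

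The paper's device is different and is the real idea you are missing: instead of interpolating $\varphi$, one interpolates by \emph{solving an auxiliary discrete Poisson problem}. Let $\widetilde{v}=(\widetilde{v}_K)$ be the two-point-flux FV solution of $-\Delta v=-\Delta\varphi$ with homogeneous Dirichlet data; by construction $\Delta_\cell\widetilde{v}=[\Delta\varphi]_\cell$ exactly, so the Laplacian consistency is automatic. Standard FV error estimates give $\NORM{\widetilde{v}-\varphi_\disc}\lesssim h\norm{\varphi}{C^2}$ and $\norm{\Pi_\disc(\widetilde{v}-\varphi_\disc)}{L^q}\lesssim qh\norm{\varphi}{C^2}$. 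Since $\widetilde{v}$ need not vanish on boundary cells, one sets $w_\cell=\psi^a(\x_\cell)\widetilde{v}_\cell$ with a cutoff $\psi^a$ supported in $\O_{a/4}$ and equal to $1$ on $\O_{3a/4}$. A discrete product rule then gives $\Delta_\cell w-[\Delta\varphi]_\cell=(\widetilde{v}_\cell-\varphi(\x_\cell))\Delta_\cell\psi^a_\disc+\frac{1}{|\cell|}\sum_\edge\frac{|\edge|}{d_\edge}(\delta_{\cell,\edge}\psi^a_\disc)\,\delta_{\cell,\edge}(\widetilde{v}-\varphi_\disc)$; the $a$-dependent blow-up in \eqref{est.SD.Cc} comes from combining $\norm{\Delta_\disc\psi^a_\disc}{L^r}\lesssim a^{-2+1/r}$ and $|\delta_{\cell,\edge}\psi^a_\disc|\lesssim d_\edge a^{-1}$ with the $L^q$ and discrete-$H^1$ bounds on $\widetilde{v}-\varphi_\disc$, via H\"older with exponent $q$ optimised to $q=|\ln a|$ ($d=2$) or $q=6$ ($d=3$).
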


\begin{remark}\label{FVM.remark} If the solution $\bu$ to \eqref{biharmonic} belongs to $H^4(\Omega)\cap H^2_0(\Omega)$, then $\bu\in C^2(\overline{\Omega})$ and $\Delta \bu\in H^2(\Omega)$. In that case, Theorems \ref{error} and \ref{thm:HD.Deltaadapted} provide an $O(h^{1/4}|\ln(h)|)$ (in dimension $d=2$) or $\mathcal O(h^{3/13})$ (in dimension $d=3$) error estimate for the Hessian scheme based on the HD from Definition \ref{def.HD.Deltaadapted}.
		This slightly improves the result of \cite[Theorem 4.3]{biharmonicFV}, in which an $O(h^{1/5})$ estimate is obtained if $\bu\in C^4(\overline{\O})\cap H^2_0(\O)$.
\end{remark}
As for the method based on gradient recovery operators, the properties of the Hessian discretisation follow from
the estimates in Theorem \ref{thm:HD.Deltaadapted} and from Remark \ref{rem:dens}.
\begin{corollary}\label{cor:HD.Deltaadapted}
	Let $(\disc_m)_{m \in \N}$ be a sequence of $B\textendash$Hessian discretisations in the sense of Definition \ref{def.HD.Deltaadapted}, associated to meshes
	such that $h_{m}\to 0$ and $(\theta_{\polyd_m})_{m\in\N}$ is bounded.
	Then the sequence $(\disc_m)_{m \in \N}$ is coercive, consistent and limit-conforming.	
\end{corollary}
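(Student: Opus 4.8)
The plan is to read off all three properties from the quantitative estimates of Theorem~\ref{thm:HD.Deltaadapted}, using Remark~\ref{rem:dens} to reduce consistency and limit-conformity to dense subsets. Since $(\theta_{\polyd_m})_{m\in\N}$ is bounded, fix $\theta\ge\sup_m\theta_{\polyd_m}$, so that every constant $C$ furnished by Theorem~\ref{thm:HD.Deltaadapted} may be taken independent of $m$. Coercivity is then immediate from its first bullet, $C_{\disc_m}^B\le C$ for all $m$. With coercivity in hand, Remark~\ref{rem:dens} lets me verify \eqref{def:cons} only on a dense subset of $H^2_0(\O)$ and \eqref{def:lc} only on a dense subset of $\wdspace(\O)$.

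For consistency I would test on $\varphi\in C^\infty_c(\O)$, which is dense in $H^2_0(\O)$. Such a $\varphi$ lies in $C^2_c(\O)$, has $\Delta\varphi\in H^1(\O)$, and satisfies $\supp\varphi\subset\{\x\in\O\,;\,\dist(\x,\partial\O)>a\}$ with $a:=\dist(\supp\varphi,\partial\O)>0$, so \eqref{est.SD.Cc} applies and gives $S_{\disc_m}^B(\varphi)\le C(\varphi)\,h_m$, the constant $C(\varphi)$ collecting $\norm{\Delta\varphi}{H^1(\O)}$, $\norm{\varphi}{C^2(\overline{\O})}$ and the fixed $a$-dependent factor. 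Since $h_m\to0$, this yields $S_{\disc_m}^B(\varphi)\to0$, hence consistency. (One could instead apply \eqref{est.SD.FV} on $H^2_0(\O)\cap C^2(\overline{\O})$, also dense in $H^2_0(\O)$, but the compactly supported test functions keep the bound cleanest.)

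For limit-conformity, the last bullet of Theorem~\ref{thm:HD.Deltaadapted} gives $W_{\disc_m}^B(\xi)\le C\,h_m\,\norm{{\rm tr}(\xi)}{H^2(\O)}\to0$ for every fixed $\xi\in H^2(\O)^{d\times d}$, so it suffices to prove $H^2(\O)^{d\times d}$ is dense in $\wdspace(\O)$. Here $B=\frac{{\rm tr}(\cdot)}{\sqrt d}{\rm Id}$ gives $B^{\tau}B\zeta={\rm tr}(\zeta){\rm Id}$ and hence $\hessian:B^{\tau}B\zeta=\Delta({\rm tr}(\zeta))$, so $\wdspace(\O)=\{\zeta\in L^2(\O)^{d\times d}\,;\,\Delta({\rm tr}(\zeta))\in L^2(\O)\}$ with its graph norm. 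Splitting $\zeta=(\zeta-\frac1d{\rm tr}(\zeta){\rm Id})+\frac1d{\rm tr}(\zeta){\rm Id}$, the trace-free part carries no differential constraint and is approximated componentwise in $L^2(\O)$ by smooth matrix fields, while $g:={\rm tr}(\zeta)$ lies in $H(\Delta;\O):=\{v\in L^2(\O)\,;\,\Delta v\in L^2(\O)\}$ and is approximated in the graph norm by functions of $C^\infty(\overline{\O})$; reassembling produces $\xi\in C^\infty(\overline{\O})^{d\times d}\subset H^2(\O)^{d\times d}$ arbitrarily close to $\zeta$ in $\wdspace(\O)$, and Remark~\ref{rem:dens} then gives limit-conformity. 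The only step that is not pure bookkeeping is this last density claim, which boils down to the classical density of $C^\infty(\overline{\O})$ in $H(\Delta;\O)$ on a bounded Lipschitz (here polytopal) domain; everything else follows directly from Theorem~\ref{thm:HD.Deltaadapted} together with $h_m\to0$.
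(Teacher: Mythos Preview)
Your proof follows exactly the approach the paper indicates (the paper simply states that the corollary follows from Theorem~\ref{thm:HD.Deltaadapted} and Remark~\ref{rem:dens}, without further detail); you have correctly filled in the missing pieces, in particular the density of $H^2(\O)^{d\times d}$ in $\wdspace(\O)$ for this choice of $B$.

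One small technical slip in that density argument: when you approximate the trace-free part $\zeta_0$ componentwise in $L^2(\O)$ by a smooth matrix field $\eta$, nothing forces $\eta$ to be trace-free, so after reassembling $\xi=\eta+\frac1d g_\epsilon\,{\rm Id}$ you get ${\rm tr}(\xi)={\rm tr}(\eta)+g_\epsilon$, and $\Delta({\rm tr}(\eta))$ is uncontrolled in $L^2(\O)$ (you only know ${\rm tr}(\eta)\to 0$ in $L^2$). The fix is immediate: replace $\eta$ by its trace-free part $\eta-\frac1d{\rm tr}(\eta)\,{\rm Id}$, which is still smooth and still converges to $\zeta_0$ in $L^2(\O)$; then ${\rm tr}(\xi)=g_\epsilon$ and convergence in the graph norm of $\wdspace(\O)$ follows.
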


\begin{proof}[Proof of Theorem \ref{thm:HD.Deltaadapted}]~\\
	
	$\bullet$ \textsc{Coercivity}: the discrete Poincar\'e inequality of \cite{EGH00} states that
	\begin{equation}
	\norm{\Pi_\disc v_\disc}{} \le \mbox{diam}(\O)\norm{v_\disc}{\disc}\,, \quad \forall  v_\disc \in X_{\disc,0}. \label{PI}
	\end{equation}
	Let us first prove that
	\be
	-\int_{\O}^{}\Pi_\disc u_\disc \Delta_\disc v_\disc dx = [u_\disc, v_\disc]_\disc,\quad u_\disc,\,  v_\disc \in X_{\disc,0}.\label{int_ip}
	\ee
	The definitions of $\Pi_\disc$ and $\Delta_\disc$ yield
	\[
	-\int_{\O}^{}\Pi_\disc u_\disc \Delta_\disc v_\disc dx 
	=\sum_{\cell \in \mesh}^{}-|\cell|u_\cell \Delta_\cell v_\disc=-\sum_{\cell \in \mesh}^{}u_\cell\sum_{\edge \in \edges_\cell}^{}\frac{|\edge|\delta_{\cell,\edge} v_\disc}{d_\edge}. 
	\]
	For $\edge \in \edgesext$, $\delta_{\cell,\edge}v_\disc=0.$ Gathering the sums by edges and using \eqref{jump} and \eqref{jump1}, we obtain
	\[
	-\int_{\O}^{}\Pi_\disc u_\disc \Delta_\disc v_\disc dx =\sum_{\cell \in \mesh}^{}u_\cell\sum_{\edge \in \edges_{\cell,\rm{int}}}^{}\frac{|\edge|(v_\cell-v_L)}{d_\edge} 
	=\sum_{\edge \in \edgesint}^{}\frac{|\edge|\delta_{\edge}u_\disc\delta_{\edge} v_\disc}{d_\edge},
	\]
	which establishes \eqref{int_ip}.
	Choosing $v_\disc=u_\disc$, applying the Cauchy--Schwarz inequality and using \eqref{PI}, we get
	\begin{equation*}
	\norm{u_\disc}{\disc}^2 \le \norm{\Pi_\disc u_\disc}{} \norm{\Delta_\disc u_\disc}{} 
	\le \mbox{diam}(\O)\norm{u_\disc}{\disc} \norm{\Delta_\disc u_\disc}{}.
	\end{equation*}
	Thus,
	\be \norm{u_\disc}{\disc} \le \mbox{diam}(\O)\norm{\Delta_\disc u_\disc}{}. \label{ineq_1}\ee
	Combining \eqref{PI} and \eqref{ineq_1}, we get
	\begin{equation}
	\norm{\Pi_\disc v_\disc}{} \le \mbox{diam}(\O)^2 \norm{\Delta_\disc u_\disc}{}. \label{ineq_2}
	\end{equation}
	The stability of the discrete gradient \cite[Lemma 4.1]{biharmonicFV} yields
	\[
	\norm{\nabla_\disc u_\disc}{} \le \theta\sqrt{d}\norm{u_\disc}{\disc}\quad\forall u_\disc \in X_{\disc,0}.
	\]
	Estimate \eqref{ineq_1} then shows that $\norm{\nabla_\disc u_\disc}{} \le \mbox{diam}(\O)\theta \sqrt{d}\norm{\Delta_\disc u_\disc}{}$, which, together with \eqref{ineq_2}, concludes the proof of the estimate on $C_\disc^B$.
	\smallskip
	
	$\bullet$ \textsc{Consistency -- compact support}:
	The proof utilises the ideas of \cite{biharmonicFV}, with a few improvements of the estimates. For $s>0$ we let $\Omega_s=\{x\in\Omega\,;\,\dist(\x,\partial\Omega)>s\}$. In this proof, $A\lesssim B$ means that $A\le CB$ for some constant $C$ depending only on $\theta$.
		
		We first consider the case where $\varphi\in C^2_c(\Omega)$ and $\Delta\varphi\in H^1(\Omega)$, with  support at distance from $\partial\Omega$ equal to or greater than $a$. As in \cite[Proof of Lemma 4.4]{biharmonicFV}, let $\psi^a\in C^\infty_c(\Omega)$, equal to $1$ on $\Omega_{3a/4}$, that vanishes on $\Omega\backslash\Omega_{a/4}$, and such that, for all $\alpha\in\N^d$, with $|\alpha|=\sum_{i=1}^d\alpha_i$,
		\begin{equation}\label{prop.psia}
		\norm{\partial^\alpha \psi^a}{L^\infty(\Omega)}\lesssim a^{-|\alpha|}.
		\end{equation}
		Letting $\psi^a_\disc=(\psi^a(\x_K))_{K\in\mesh}$, we have
		$|\Delta_\disc \psi^a_\disc|\lesssim a^{-2}$. Hence, for all $r\in [1,\infty]$, since $\Omega\backslash\Omega_{2a}$ has measure $\lesssim a$,
		\begin{equation}\label{est.psia}
		\norm{\Delta_\disc \psi^a_\disc}{L^r(\Omega)}\lesssim a^{-2+\frac{1}{r}}.
		\end{equation}
		
		Letting $\widetilde{v}=(\widetilde{v}_K)_{K\in\mesh}$ be the solution of the two-point flux approximation finite volume scheme with homogeneous Dirichlet boundary conditions and source term $-\Delta \varphi$, by \cite{EGH00} we have, with $\varphi_\disc=(\varphi(\x_K))_{K\in\mesh}$,
		\begin{equation}
		\label{est.H1}
		\left(\sum_{\sigma \in \edges}\frac{|\sigma|}{d_\sigma} (\delta_\sigma(\widetilde{v}-\varphi_\disc))^2\right)^{1/2}\lesssim h \norm{\varphi}{C^2(\overline{\Omega})}
		\end{equation}
		and, for $q\in[1,+\infty)$ if $d=2$, $q\in[1,6]$ if $d=3$,
		\begin{equation}
		\label{est.Lq}
		\left(\sum_{K\in\mesh}|K|\,|\widetilde{v}_K-\varphi(\x_K)|^q\right)^{1/q}\lesssim q h \norm{\varphi}{C^2(\overline{\Omega})}.
		\end{equation}
		We then set $w=(\psi^a(\x_K)\widetilde{v}_K)_{K\in\mesh}$, that belongs to $X_{\disc,0}$ if $h\le a/4$. It is proved in \cite[Proof of Lemma 4.4, p. 2032]{biharmonicFV} that, with $[\Delta \varphi]_K=\frac{1}{|K|}\int_K \Delta\varphi\,dx$,
		\begin{align}
		\Delta_K w - [\Delta \varphi]_K ={}& (\widetilde{v}_K-\varphi(\x_K))\Delta_K \psi^a_\disc + \frac{1}{|K|}\sum_{\sigma\in\mathcal F_K}\frac{|\sigma|}{d_\sigma} (\delta_{K,\sigma}\psi_\disc^a)\delta_{K,\sigma}(\widetilde{v}-\varphi_\disc),\nonumber\\
		={}&T_{1,K}+T_{2,K}.
		\label{est.DeltaDw}
		\end{align}
		Using H\"older's inequality with exponents $(q,\frac{2q}{q-2})$, for some $q>2$ admissible in \eqref{est.Lq}, and recalling \eqref{est.psia}, we have
		\begin{equation}\label{est.T1}
		\left(\sum_{K\in\mesh}|K|\,|T_{1,K}|^2\right)^{1/2}\lesssim q ha^{-2+\frac{q-2}{2q}}\norm{\varphi}{C^2(\overline{\Omega})}.
		\end{equation}
		On the other hand, we have $|\delta_{K,\sigma}\psi_\disc^a|\lesssim d_\sigma a^{-1}$ (see \cite[Proof of Lemma 4.4]{biharmonicFV}). Hence, by Cauchy--Schwarz inequality on the sum over the faces, and using the estimate $\sum_{\sigma\in\mathcal F_K}|\sigma|d_\sigma\lesssim |K|$,
		\[
		|T_{2,K}|^2\lesssim \frac{a^{-2}}{|K|^2}\left(\sum_{\sigma\in\mathcal F_K}|\sigma|\,|\delta_{K,\sigma}(\widetilde{v}-\varphi_\disc)|\right)^2
		\lesssim\frac{a^{-2}}{|K|}\sum_{\sigma\in\mathcal F_K}\frac{|\sigma|}{d_\sigma}(\delta_{K,\sigma}(\widetilde{v}-\varphi_\disc))^2.
		\]
		Estimate \eqref{est.H1} thus leads to
		\begin{equation}\label{est.T2}
		\left(\sum_{K\in\mesh}|K|\,|T_{2,K}|^2\right)^{1/2}\lesssim   a^{-1}h  \norm{\varphi}{C^2(\overline{\Omega})}.
		\end{equation}
		Denote by $[\Delta\varphi]_{\disc}$ the piecewise constant function equal to $[\Delta\varphi]_K$ on $K\in\mesh$. Taking the $L^2$ norm of \eqref{est.DeltaDw} and using \eqref{est.T1} and \eqref{est.T2}, we arrive at, since $a^{-1}\lesssim a^{-\frac{3}{2}-\frac{1}{q}}$,
		\[
		\norm{\Delta_\disc w-[\Delta\varphi]_{\disc}}{L^2(\Omega)}
		\lesssim q h a^{-\frac{3}{2}-\frac{1}{q}}\norm{\varphi}{C^2(\overline{\Omega})}.
		\]
		Taking $q=|\ln(a)|$ if $d=2$ or $q=6$ if $d=3$ shows that 
		\begin{equation}\label{second.cons}
		\norm{\Delta_\disc w-[\Delta\varphi]_{\disc}}{L^2(\Omega)}
		\lesssim h \norm{\varphi}{C^2(\overline{\Omega})}
		\times\left\{\begin{array}{ll}|\ln(a)|a^{-3/2}&\mbox{ if $d=2$},\\
		a^{-5/3}&\mbox{ if $d=3$}.\end{array}\right.
		\end{equation}
		A classical estimate \cite[Lemma B.6]{koala} gives 
		\begin{equation}\label{est.proj}
		\norm{[\Delta\varphi]_\disc-\Delta\varphi}{L^2(\Omega)}\lesssim h\norm{\Delta\varphi}{H^1(\Omega)},
		\end{equation}
		which shows that $\norm{\Delta_\disc w-\Delta\varphi}{L^2(\Omega)}$ is bounded above by the
		right-hand side of \eqref{est.SD.Cc}. The estimates on $\nabla_\disc w-\nabla\varphi$ and on $\Pi_\disc w-\varphi$ follow as in \cite[Lemma 4.4]{biharmonicFV}.

	\smallskip
	$\bullet$ \textsc{Consistency -- general case}:
	Consider now $\varphi\in H^2_0(\Omega)\cap C^2(\overline{\Omega})$, and take $\psi^a$ as above.
		The boundary conditions on $\varphi$ show that $|\varphi(\x)|\lesssim \norm{\varphi}{C^2(\overline{\Omega})}\dist(\x,\partial\Omega)^2$ and $|\nabla\varphi(\x)|\lesssim \norm{\varphi}{C^2(\overline{\Omega})}\dist(\x,\partial\Omega)$. Hence, using \eqref{prop.psia}, $|\O\backslash\O_a|\lesssim a$ and the fact that $1-\psi^a=0$ in $\Omega_a$, we see that, for all $\alpha\in\N^d$ with $|\alpha|\le 2$,
		\begin{equation}\label{est.partial.phi}
		\norm{\partial^\alpha \varphi-\partial^\alpha(\psi^a\varphi)}{L^2(\Omega)}\lesssim a^{1/2} \norm{\varphi}{C^2(\overline{\Omega})}.
		\end{equation}
		Since $\Delta=\sum_{i=1}^2 \partial_i^2$, the above estimate applies to $\Delta$ instead of $\partial^\alpha$ and, as a consequence,
		\begin{equation}\label{est.Deltaphi2}
		\norm{[\Delta \varphi]_\disc-[\Delta(\psi^a\varphi)]_\disc}{L^2(\Omega)}
		\le \norm{\Delta \varphi-\Delta(\psi^a\varphi)}{L^2(\Omega)}
		\lesssim a^{1/2} \norm{\varphi}{C^2(\overline{\Omega})}.
		\end{equation}
		Consider now the interpolant $w\in X_{\disc,0}$ for $\psi^a\varphi\in C^2_c(\Omega)$ constructed 
		above. Applying \eqref{second.cons} to $\psi^a\varphi$ instead of $\varphi$, noting that $\norm{\psi^a\varphi}{C^2(\overline{\Omega})}\lesssim \norm{\varphi}{C^2(\overline{\Omega})}$ (consequence of \eqref{est.partial.phi}), and using \eqref{est.Deltaphi2}, we obtain
		\[
		\norm{\Delta_\disc w-[\Delta\varphi]_{\disc}}{L^2(\Omega)}
		\lesssim a^{1/2} \norm{\varphi}{C^2(\overline{\Omega})}+h \norm{\varphi}{C^2(\overline{\Omega})}
		\times\left\{\begin{array}{ll}|\ln(a)|a^{-3/2}&\mbox{ if $d=2$},\\
		a^{-5/3}&\mbox{ if $d=3$}.\end{array}\right.
		\]
		Taking $a=h^{1/2}$ if $d=2$ or $a=h^{6/13}$ if $d=3$ leads to
		\[
		\norm{\Delta_\disc w-[\Delta\varphi]_{\disc}}{L^2(\Omega)}
		\lesssim \norm{\varphi}{C^2(\overline{\Omega})}\times\left\{\begin{array}{ll}h^{1/4}|\ln(h)|&\mbox{ if $d=2$},\\
		h^{3/13}&\mbox{ if $d=3$}.\end{array}\right.
		\]
Combined with \eqref{est.proj} this shows that $\norm{\Delta_\disc w-\Delta\varphi}{L^2(\Omega)}$ is bounded above by the right-hand side of \eqref{est.SD.FV}. The estimates on $\Pi_\disc w-\varphi$ and $\nabla_\disc w-\nabla \varphi$ follow in a similar way.

	\smallskip
	$\bullet$ \textsc{Limit-conformity}: 
	For $ \xi \in \wdspace(\O)$ and $ v_\disc \in X_{\disc,0}$,  $B=\frac{{\rm tr}(\cdot)}{\sqrt{d}}{\rm Id}$ implies
	$$\int_\O (\hessian:B^{\tau}B\xi)\Pi_\disc v_\disc\d\x =\int_\O(B\hessian:B\xi)\Pi_\disc v_\disc\d\x=\int_{\O}^{}\Delta \phi \Pi_\disc v_\disc \d\x,$$ where $\phi={\rm tr}(\xi).$ Also, by definition of $\hbd$,
	$$\int_\O B\xi:\hbd v_\disc \d\x=\int_{\O}^{}\phi\Delta_\disc v_\disc\d\x.$$ Thus, \eqref{def.WD} can be rewritten as
	\be
	W_\disc^B(\xi)=\max_{v_\disc\in X_{\disc,0}\backslash\{0\}}
	\frac{1}{\norm{\hbd v_\disc}{}}\Bigg|\int_\O \Big(\Delta \phi \Pi_\disc v_\disc - \phi \Delta_\disc v_\disc \Big)\d\x \Bigg|, \label{biharmonic_WD}
	\ee
	where $\phi={\rm tr}(\xi)$. Define
	\begin{equation}
	\hat{\delta}_\edge\phi=
	\begin{cases}
	\phi(\x_{\cell_\edge^+})-\phi(\x_{\cell_\edge^-}) \,\,&\forall \edge\in \edgesint \\
	\phi(\z_\edge)-\phi(\x_{\cell_\edge})\,\,&\forall \edge \in \edgesext,
	\end{cases}\label{deltahat}
	\end{equation} 
	where $\z_\edge$ is the orthogonal projection of $\x_\cell$ on the hyperplane which contains $\edge.$ For $\xi \in H^2(\O)^{d \times d}$, using the divergence theorem, 
	\[
	\int_{\O}^{}\Delta \phi \Pi_\disc v_\disc \d\x  = \sum_{\cell \in \mesh}^{} \int_{\cell}^{}\Delta \phi \Pi_\disc v_\disc \d\x 
	= \sum_{\cell \in \mesh}^{}\sum_{\edge \in \edgescv}^{} v_\cell\int_{\edge}^{}\nabla \phi\cdot n_{\cell,\edge} \d s(\x).
	\]
	Gathering over the edges and using the definition of $\delta_{\edge}$, this leads to
	\begin{align}
	\int_{\O}^{}\Delta \phi{}& \Pi_\disc v_\disc \d\x = -\sum_{\edge \in \edges}^{} \delta_\edge v_\disc\int_{\edge}^{}\nabla \phi\cdot n_{\edge} \d s(\x) \nonumber \\
	& = -\sum_{\edge \in \edges}^{} \delta_\edge v_\disc\int_{\edge}^{} \Big(\frac{\hat{\delta}_\edge \phi}{d_\edge}+\nabla \phi\cdot n_{\edge}- \frac{\hat{\delta}_\edge\phi}{d_\edge} \Big) \d s(\x) \nonumber \\
	& = -\sum_{\edge \in \edges}^{} \delta_\edge v_\disc \frac{\hat{\delta}_\edge \phi |\edge|}{d_\edge} + \sum_{\edge \in \edges}^{} \delta_\edge v_\disc\int_{\edge}^{} \Big(\frac{\hat{\delta}_\edge \phi}{d_\edge}-\nabla \phi \cdot n_{\edge}\Big) \d s(\x). \label{limitconformity_bihar}
	\end{align}
	Since $\delta_{\edge}v_\disc=0$ for any $\edge \in \edgesext$, \eqref{deltahat}, \eqref{jump} and \eqref{biharmonic_delta} imply
	\begin{align*}
	-\sum_{\edge \in \edges}^{} \delta_\edge v_\disc \frac{\hat{\delta}_\edge \phi |\edge|}{d_\edge}&=-\sum_{\edge \in \edgesint}^{}  \frac{|\edge|}{d_\edge}\delta_\edge v_\disc \Big(\phi(\x_{\cell_{\edge}}^+)-\phi(\x_{\cell_{\edge}}^-)\Big) \\
	&=\sum_{\cell \in \mesh}^{}\phi(\x_\cell)\sum_{\edge \in \edgescv}^{}  \frac{|\edge|}{d_\edge}\delta_{\cell,\edge} v_\disc = \sum_{\cell \in \mesh}^{}|\cell|\phi(\x_\cell)\Delta_\cell v_\disc.
	\end{align*} 
	Substituting this in \eqref{limitconformity_bihar}, we obtain
	\begin{equation}\label{FV.limconf.1}
	\begin{aligned}
	\int_{\O}^{}\Delta \phi \Pi_\disc v_\disc \d\x ={}& \sum_{\cell \in \mesh}^{}|\cell|\phi(\x_\cell)\Delta_\cell v_\disc\\
	&+ \sum_{\edge \in \edges}^{} \delta_\edge v_\disc\int_{\edge}^{} \Big(\frac{\hat{\delta}_\edge \phi}{d_\edge}-\nabla \phi\cdot n_{\edge}\Big) \d s(\x).
	\end{aligned}
	\end{equation}
	To deal with the first term, we first combine the two estimates in \cite[Lemma 7.61]{koala} to see that
	\[
	|\phi(\x_K)-\phi(\y)|\le  C h|K|^{-1/2}\norm{\phi}{H^2(K)}\,,\qquad\forall \y\in K.
	\]
	Hence, using the Cauchy--Schwarz inequality,
	\begin{align}
	\Bigg|\sum_{\cell \in \mesh}^{}|\cell|{}&\phi(\x_\cell)\Delta_\cell v_\disc-\int_\O \phi\Delta_\disc v_\disc\d\x\Bigg|\nonumber\\
	={}& \Bigg|\sum_{\cell \in \mesh}^{}|\cell|\left(\phi(\x_\cell)-\frac{1}{|K|}\int_K \phi(\y)\d\y\right)\Delta_\cell v_\disc\Bigg|\nonumber\\
	\le{}& C h\norm{\phi}{H^2(\O)}\left(\sum_{\cell \in \mesh}^{}|\cell||\Delta_\cell v_\disc|^2\right)^{1/2}
	= C h\norm{\phi}{H^2(\O)}\norm{\Delta_\disc v_\disc}{}.
	\label{FV.limconf.2}
	\end{align}
	Turning to the second term in the right-hand side of \eqref{FV.limconf.1}, we notice that the estimate on the terms $R_{K,\edge}$ in \cite[Proof of Theorem 3.4]{EGH00} show that
	\[
	\left|\frac{\hat{\delta}_\edge\phi}{d_\edge}-\nabla \phi\cdot n_{\edge}\right|\le C h \frac{\sqrt{|\edge|}}{\sqrt{d_\edge}}\norm{\hessian \phi}{L^2(\cup_{L\in \mesh_\edge}L)^{d\times d}}.
	\]
	Hence, by the Cauchy--Schwarz inequality, we have
	\begin{align}
	\Bigg|\sum_{\edge \in \edges}^{} \delta_\edge v_\disc\int_{\edge}^{} {}&\Big(\frac{\hat{\delta}_\edge \phi}{d_\edge}-\nabla \phi\cdot n_{\edge}\Big) \d s(\x)\Bigg|
	\le  C h\norm{\hessian \phi}{}\left(\sum_{\edge \in \edges}^{}\frac{|\edge|}{d_\edge}(\delta_\edge v_\disc)^2\right)^{1/2}\nonumber\\
	={}& Ch\norm{\phi}{H^2(\O)}\norm{v_\disc}{\disc} 
	\le  Ch\mbox{diam}(\O)\norm{\phi}{H^2(\O)}\norm{\Delta_\disc v_\disc}{},
	\label{FV.limconf.3}
	\end{align}
	where we have used \eqref{ineq_1} in the last line. Plugging \eqref{FV.limconf.2} and \eqref{FV.limconf.3} into \eqref{FV.limconf.1}, we obtain
	\[
	\left|\int_{\O}^{}\Delta \phi \Pi_\disc v_\disc \d\x-\int_{\O}^{}\phi\Delta_\disc v_\disc\d\x\right|\le Ch\norm{\phi}{H^2(\O)}\norm{\Delta_\disc v_\disc}{},
	\]
	and the estimate on $W_\disc(\xi)$ then follows from \eqref{biharmonic_WD}, recalling that $\phi={\rm tr}(\xi)$.
\end{proof}
\begin{remark}\label{FVM.remark.genericmesh}
	The same analysis also probably applies to the second method presented in \cite[Section 5]{biharmonicFV}, which is applicable on general polygonal meshes.
\end{remark}

\section{Numerical results}\label{sec.example}

In this section, we present the results of some numerical experiments for the gradient recovery (GR) method and finite volume (FV) method presented in Sections \ref{sec.grmethod} and \ref{sec.fvm}.
All these tests are conducted on the biharmonic problem $\Delta^2 \bu=f$ on $\O=(0,1)^2$, with clamped boundary conditions and for various exact solutions $\bu$.

\subsection{Numerical results for Gradient Recovery method}\label{sec.GRexample}

Three examples are presented to illustrate the theoretical estimates of Theorem \ref{error} on the Hessian discretisation described in Section \ref{sec:biorth}. 
The considered FE space $V_h$ is therefore the conforming $\mathbb{P}_1$ space,
and the implementation was done following the ideas in \cite{BL_stab.mixedfem}. The following relative errors,
and related orders of convergence, in $L^2(\O)$, $H^1(\O)$ and $H^2(\O)$ norms are presented:
	\begin{align*}
	&\err_\disc(\bu):=\frac{\norm{\Pi_\disc u_\disc -\bu}{}}{\norm{\bu}{}},\quad
	\err(\nabla\bu) :=\frac{\norm{\nabla u_\disc -\nabla\bu}{}}{\norm{\nabla\bu}{}}\\ 
	&\err_\disc(\nabla\bu) :=\frac{\norm{\nabla_\disc u_\disc -\nabla\bu}{}}{\norm{\nabla\bu}{}}=\frac{\norm{Q_h \nabla u_\disc -\nabla\bu}{}}{\norm{\nabla\bu}{}},\\
&\err_\disc(\hessian\bu) :=\frac{\norm{\hbd u_\disc -\hessian\bu}{}}{\norm{\hessian\bu}{}}=\frac{\norm{\nabla (Q_h \nabla u_\disc) -\hessian\bu}{}}{\norm{\hessian\bu}{}},
	\end{align*}
	where $u_\disc$ is the solution to the Hessian scheme \eqref{base.HS}. 		
											
We provide in Table \ref{table.comparsion1} the mesh data: mesh sizes $h$, numbers of unknowns (that is, the number of internal vertices) $\textbf{nu}$, and numbers of non-zero terms $\textbf{nnz}$ in the square matrix of the system.
		\begin{table}[h!!] 
			\caption{\small{(GR) Mesh size, number of unknowns and number of non-zero terms in the square matrix }}
			{\small{\footnotesize
					\begin{center}
						\begin{tabular}{| |c|c|c||}
							\hline
							$	h$&	$\textbf{nu}$&$\textbf{nnz}$ \\ 
							\hline\\[-10pt]  &&\\[-9pt]
							
							0.176777&9&   79\\
							0.088388&49& 1203\\
							0.044194&225&  7011\\
							0.022097&961&   32835\\
							0.011049&3969&  141315\\
							0.005524&16129&585603\\
							\hline				
						\end{tabular}
					\end{center}}	}\label{table.comparsion1}
				\end{table}

	\subsubsection{Example 1}
		The exact solution is chosen to be $\bu(x,y) = x^2(x-1)^2y^2(y-1)^2$. To assess the effect of the stabilisation function $\stab_h$ on the results, we multiply it by a factor $r$ that takes the values 0.1, 1, 10, and 100.
		
The errors and orders of convergence for the numerical approximation to $\bu$ are shown in Tables \ref{table1}--\ref{table4}. It can be seen that the rate of convergence is quadratic in $L^2$-norm and linear in $H^1$-norm (see $\err(\nabla\bu)$). However, using gradient recovery operator, a quadratic order of convergence in $H^1$ norm is recovered (see $\err_\disc(\nabla \bu)$).
The rate of convergence in energy norm is linear (see $\err_\disc(\hessian\bu)$), as expected by plugging the estimates of Theorem \ref{thm:HD.coerciveB} into Theorem \ref{error}. We also notice a very small effect of $r$ on the relative errors and rates.

								\begin{table}[h!!]
							\caption{\small{(GR) Convergence results for the relative errors, Example 1, $r=0.1$}}
							{\small{\footnotesize
									\begin{center}
										\begin{tabular}{ |c|c|c||c | c ||c|c|| c|c||c|c||c|c||}
											\hline
$\textbf{nu}$ &$\err_\disc(\bu)$ & Order  & $\err(\nabla \bu)$ & Order& $\err_\disc(\nabla \bu)$ & Order  &$\err_\disc(\hessian \bu)$ & Order  \\ 
											\hline\\[-10pt]  &&\\[-9pt]
9&   9.274702&        -& 31.591906& -&  0.568338&       -&    0.595635&              -\\
49&   0.220095&   5.3971&  0.682922&  5.5317&  0.164105&   1.7921&   0.266927&   1.1580\\
225&   0.066997&   1.7160&  0.201282&   1.7625& 0.049395&   1.7322&   0.128410&   1.0557\\
961&   0.019135&   1.8079&  0.088805&   1.1805&  0.013697&   1.8505&   0.062164&   1.0466\\
3969&   0.005133&   1.8983&  0.040845&   1.1205&  0.003623&   1.9185&   0.030457&   1.0293\\ 	
16129&0.001331& 1.9474&   0.019422&   1.0724&0.000933& 1.9568& 0.015059& 1.0161\\				
											\hline				
										\end{tabular}
									\end{center}	}}\label{table1}
								\end{table}		

		\begin{table}[h!!] 
			\caption{\small{(GR) Convergence results for the relative errors, Example 1, $r=1$}}
			{\small{\footnotesize
					\begin{center}
						\begin{tabular}{ |c||c |c||c| c|| c|c||c|c||c|c|}
							\hline
	$\textbf{nu}$&$\err_\disc(\bu)$ & Order  &$\err(\nabla \bu)$ & Order&$\err_\disc(\nabla \bu)$ & Order  &$\err_\disc(\hessian \bu)$ & Order  \\ 
							\hline\\[-10pt]  &&\\[-9pt]
							
9& 1.050930&        - &  3.254044&        -& 0.567670 &        -&  0.582647&      -\\
49&  0.214195&   2.2947&   0.482686&   2.7531& 0.167145&   1.7640&    0.267188&  1.1248\\
225& 0.067498&   1.6660&   0.200108&   1.2703& 0.049952&   1.7425&    0.128511&   1.0560\\
961& 0.019240&   1.8107&   0.088667&   1.1743& 0.013806&   1.8553&    0.062184&   1.0473\\
3969& 0.005156&   1.8999&   0.040835&   1.1186& 0.003646&   1.9209&    0.030460&   1.0296\\
16129&0.001336&  1.9482&  0.019421&   1.0722&0.000938& 1.9581& 0.015060&  1.0162\\
							\hline				
						\end{tabular}
					\end{center}}	}\label{table2}
				\end{table}

				\begin{table}[h!!]
					\caption{\small{(GR) Convergence results for the relative errors, Example 1, $r=10$}}
					{\small{\footnotesize
							\begin{center}
								\begin{tabular}{ |c||c | c ||c|c|| c|c||c|c||c|c||}
									\hline
$\textbf{nu}$ &$\err_\disc(\bu)$ & Order  &$\err(\nabla \bu)$ & Order&$\err_\disc(\nabla \bu)$ & Order  &$\err_\disc(\hessian \bu)$ & Order  \\ 
									\hline\\[-10pt]  &&\\[-9pt]
9&   0.661894&     -&    0.778521&    -&   0.583641&       -&    0.586174&      -\\
49&   0.236529&   1.4846& 0.449484&   0.7925&  0.195127&   1.5807&   0.274030&   1.0970\\
225&   0.072610&   1.7038& 0.197892&   1.1836&  0.055493&   1.8140&   0.129911&   1.0768\\
961&   0.020303&   1.8385& 0.088413&   1.1624&  0.014907&   1.8963&   0.062418&   1.0575\\
3969&   0.005382&   1.9154& 0.040804&   1.1156&  0.003877&   1.9429&   0.030494&   1.0335\\		
16129&   0.001387& 1.9564&  0.019417& 1.0714&   0.000990&  1.9695&  0.015064&  1.0174\\			
									\hline				
								\end{tabular}
							\end{center}	}}\label{table3}
						\end{table}		
								\begin{table}[h!!]
									\caption{\small{(GR) Convergence results for the relative errors, Example 1, $r=100$}}
									{\small{\footnotesize
											\begin{center}
												\begin{tabular}{ |c||c | c||c|c || c|c||c|c||c|c||}
													\hline
$\textbf{nu}$ &$\err_\disc(\bu)$ & Order  & $\err(\nabla \bu)$ & Order &$\err_\disc(\nabla \bu)$ & Order  &$\err_\disc(\hessian \bu)$ & Order  \\ 
													\hline\\[-10pt]  &&\\[-9pt]
9&   0.784444&        -&  0.805690&  -& 0.701021&        -&   0.695247&      -\\
49&   0.409420&   0.9381&  0.456340&  0.8201& 0.386868&   0.8576&   0.408281&   0.7680\\
225&   0.123166&   1.7330&  0.199370&  1.1947& 0.108498&   1.8342&   0.157333&   1.3757\\
961&   0.031509&   1.9667&  0.088447&   1.1726& 0.026358&   2.0414&   0.066443&   1.2436\\
3969&   0.007812&   2.0121&  0.040790&   1.1166&  0.006356&   2.0521&   0.031019&   1.0990\\				
16129&0.001934& 2.0139& 0.019414&  1.0711& 0.001552& 2.0340& 0.015130& 1.0357\\		
													\hline				
												\end{tabular}
											\end{center}	}}\label{table4}
										\end{table}		

\subsubsection{Example 2}
We consider here the transcendental exact solution $\bu= x^2(x-1)^2y^2(y-1)^2(\cos(2\pi x)+\sin(2\pi y))$, and $r=0.1, 1$ and $10$. Tables \ref{table5}--\ref{table7} presents the numerical results. The same comments as in Example 1 can be made about the rates of convergence. Past the coarsest meshes, we also notice as in Example 1 that $r$ only has a small impact on the relative errors.

\begin{table}[h!!]
\caption{\small{(GR) Convergence results for the relative errors, Example 2, $r=0.1$}}
	{\small{\footnotesize
		\begin{center}
								\begin{tabular}{ |c||c | c ||c|c|| c|c||c|c||c|c||}
			\hline
$\textbf{nu}$&$\err_\disc(\bu)$ & Order&$\err(\nabla \bu)$ & Order  &$\err_\disc(\nabla \bu)$ & Order&$\err_\disc(\hessian \bu)$ & Order  \\   
\hline\\[-10pt]  &&\\[-9pt]
9&89.040689& -& 183.461721& -&   1.211097&  -& 1.614525&  -\\
49&0.825060& 6.7538&3.401374&5.7532&0.235295& 2.3638&  0.501568& 1.6866\\
225&0.076841& 3.4246&0.337917&3.3314&0.050832& 2.2107&  0.172310& 1.5414\\
961&0.017830& 2.1076&0.114315&1.5637&0.013579& 1.9044&  0.079638& 1.1135\\
3969&0.004565& 1.9655&0.052228&1.1301&0.003638& 1.9002&  0.039166&  1.0239\\
16129&  0.001168& 1.9662& 0.025518& 1.0333& 0.000949&1.9391& 0.019457&  1.0093\\
																			\hline				
						\end{tabular}
	\end{center}	}}\label{table5}
\end{table}			
										
\begin{table}[h!!]
\caption{\small{(GR) Convergence results for the relative errors, Example 2, $r=1$}}
{\small{\footnotesize
\begin{center}
								\begin{tabular}{ |c||c | c ||c|c|| c|c||c|c||c|c||}
		\hline
$\textbf{nu}$ &$\err_\disc(\bu)$ & Order&$\err(\nabla \bu)$ & Order  &$\err_\disc(\nabla \bu)$ & Order&$\err_\disc(\hessian \bu)$ & Order  \\   
\hline\\[-10pt]  &&\\[-9pt]
9&10.222667&      -&  19.376883&       -&  1.058048 &    -& 1.333720&       -\\
49&0.475973&   4.4247&1.467316& 3.7231&   0.229176& 2.2069&   0.473233& 1.4948\\
225&0.074399&   2.6775&0.313397& 2.2271&   0.050755& 2.1748&   0.170477& 1.4730\\
961&0.017711&   2.0706&0.112806& 1.4742&   0.013591& 1.9009&   0.079552& 1.0996\\
3969&0.004547&   1.9615&0.052162& 1.1128&   0.003640& 1.9006&   0.039162& 1.0224\\
16129& 0.001164& 1.9657& 0.025515& 1.0317&   0.000949& 1.9393&   0.019456&  1.0092\\
											\hline				
														\end{tabular}
													\end{center}	}}\label{table6}
												\end{table}		

	\begin{table}[h!!]
	\caption{\small{(GR) Convergence results for the relative errors, Example 2, $r=10$}}
	{\small{\footnotesize
		\begin{center}
								\begin{tabular}{ |c||c | c ||c|c|| c|c||c|c||c|c||}
			\hline
$\textbf{nu}$ &$\err_\disc(\bu)$ & Order&$\err(\nabla \bu)$ & Order  &$\err_\disc(\nabla \bu)$ & Order&$\err_\disc(\hessian \bu)$ & Order  \\   
	\hline\\[-10pt]  &&\\[-9pt]
9&1.413122&  -& 2.541143&     -&  0.845365&   -&   0.894504&     -\\
49&0.313425& 2.1727&   0.878752& 1.5319&0.225247&  1.9081& 0.396725&  1.1729\\
225&0.066842& 2.2293&   0.262354& 1.7439&0.051757&  2.1217& 0.165546& 1.2609\\
961&0.016897& 1.9840&   0.109794& 1.2567&0.013783&  1.9089& 0.079311& 1.0616\\
3969&0.004376& 1.9492&   0.052012& 1.0779&0.003675&  1.9072& 0.039149&  1.0185\\		
16129&	   0.001123& 1.9621& 0.025506&    1.0280 &    0.000956& 1.9425& 0.019455&  1.0088\\														
																	\hline				
																\end{tabular}
															\end{center}	}}\label{table7}
														\end{table}

\subsubsection{Example 3}

{Here, $\bu(x,y)= x^3y^3(1-x)^3(1-y)^3(e^x\sin(2\pi x)+\cos(2\pi x))$ and $r=0.1, 1$ and $10$. The results presented in Tables \ref{table8}--\ref{table10} are similar to those obtained for Examples 1 and 2.

\begin{table}[h!!]
\caption{\small{(GR) Convergence results for the relative errors, Example 3, $r=0.1$}}
	{\small{\footnotesize																				\begin{center}
								\begin{tabular}{ |c||c | c ||c|c|| c|c||c|c||c|c||}
			\hline
$\textbf{nu}$&$\err_\disc(\bu)$ & Order &$\err(\nabla \bu)$ & Order &$\err_\disc(\nabla \bu)$ & Order  &$\err_\disc(\hessian \bu)$ & Order  \\ 
																										\hline\\[-10pt]  &&\\[-9pt]
																							9&   81.804173&       -&  164.358300&      -&   1.068682&       -&  1.155266&      -\\
49&   0.677743&  6.9153&   2.358209&  6.1230&   0.232374&    2.2013&   0.517095&    1.1597\\
225&   0.093340&  2.8602&   0.447143&  2.3989&   0.048701&    2.2544&   0.207642&    1.3163\\
961&   0.017130&  2.4459&   0.125296&  1.8354&   0.010361&    2.2328&    0.084719&   1.2933\\
3969&   0.003975&  2.1074&   0.053941&  1.2159&   0.002643&    1.9711&    0.041197&   1.0401\\
16129&0.000982& 2.0167& 0.026457&  1.0278&0.000692& 1.9341&  0.020529& 1.0049\\																									\hline				
																									\end{tabular}
																								\end{center}	}} \label{table8}
																							\end{table}	

\begin{table}[h!!]
\caption{\small{(GR) Convergence results for the relative errors, Example 3, $r=1$}}
		{\small{\footnotesize
				\begin{center}
								\begin{tabular}{ |c||c | c ||c|c|| c|c||c|c||c|c||}
			\hline
$\textbf{nu}$&$\err_\disc(\bu)$ & Order &$\err(\nabla \bu)$ & Order &$\err_\disc(\nabla \bu)$ & Order  &$\err_\disc(\hessian \bu)$ & Order  \\ 
	\hline\\[-10pt]  &&\\[-9pt]
9&8.708395& -    &16.990965&   -& 0.950590&       -&   0.990455&    - \\
49&0.516904&4.0744&1.490046&3.5113&0.224877&  2.0797&   0.492555& 1.0078\\
225&0.089332&2.5326&0.414243&1.8468&0.048056&  2.2263&   0.203301& 1.2767\\
961&0.016920&2.4005&0.122315&1.7599&0.010349&  2.2153&   0.084441&  1.2676\\
3969&0.003953&2.0975&0.053813&1.1846&0.002646&  1.9678&   0.041186&  1.0358\\
16129&   0.000978& 2.0153&  0.026452&1.0246&   0.000693& 1.9337& 0.020528& 1.0045\\
\hline				
	\end{tabular}
	\end{center}	}}\label{table9}
		\end{table}}

\begin{table}[h!!]
\caption{\small{(GR) Convergence results for the relative errors, Example 3, $r=10$}}
		{\small{\footnotesize
		\begin{center}
								\begin{tabular}{ |c||c | c ||c|c|| c|c||c|c||c|c||}
																								\hline
																							$\textbf{nu}$ &$\err_\disc(\bu)$ & Order &$\err(\nabla \bu)$ & Order &$\err_\disc(\nabla \bu)$ & Order  &$\err_\disc(\hessian \bu)$ & Order  \\ 
																								\hline\\[-10pt]  &&\\[-9pt]
																							9& 1.097695&     -&  2.068091&  -  &   0.809189&   -&  0.792818&   -\\
																							49& 0.351280&1.6438&  0.969172&1.0935&  0.205661& 1.9762& 0.409436& 0.9533\\
																							225& 0.073936&2.2483&  0.306858&1.6592&  0.046151& 2.1558& 0.186959& 1.1309\\
																							961& 0.015689&2.2365&  0.113622&1.4333&  0.010414& 2.1478& 0.083455& 1.1637\\
																							3969& 0.003756&2.0624&  0.053444&1.0882&  0.002689& 1.9535& 0.041142& 1.0204\\
	16129&   0.000935&2.0068& 0.026437& 1.0155&   0.000705& 1.9309& 0.020526&  1.0032\\																							\hline				
																							\end{tabular}
	\end{center}	}} \label{table10}
		\end{table}		
										
\subsection{Numerical results for FVM}\label{sec.FVMexample}
	In this section, we present numerical results based on the finite volume method presented in Section \ref{sec.fvm}. As noticed, this scheme requires only one unknown per cell, and is therefore easy to implement and computationally cheap. The schemes were first tested on a series of regular triangular meshes (\texttt{mesh1} family) and then on square meshes (\texttt{mesh2} family), both taken from \cite{benchmark}. To ensure the correct orthogonality property (see Definition \ref{FV.def}), the point $\x_\cell \in \cell$ is chosen as the circumcenter of $K$ if $K$ is a triangle, or the center of mass of $K$ if $K$ is a rectangle. As a result, for triangular meshes, the $L^2$ error, $\err_\disc(\bu)$, is calculated using a skewed midpoint rule, where we consider the circumcenter of each cell instead of its center of mass. We denote the relative $H^2$ error by
	\[
	\err_\disc(\Delta\bu) :=\frac{\norm{\Delta_\disc \bu_\disc -\Delta \bu}{}}{\norm{\Delta\bu}{}}.
	\]
	The $H^1$ and $H^2$ errors $(\err_\disc(\nabla\bu)$ and $\err_\disc(\Delta\bu))$ are computed using the usual midpoint rule. For comparsion with the gradient recovery method (see Table \ref{table.comparsion1}), the details of mesh size $h$, number of unknowns $\textbf{nu}$ and the number of non-zero terms in the system square matrix $\textbf{nnz}$ for the finite volume method are also provided in the following tables.
	
	\subsubsection{Example 1}\label{ex1}
	In the first example, we choose the right hand side load function $f$ such that the exact solution is given
	by $\bu(x,y) = x^2y^2(1-x)^2(1-y)^2$.  Tables \ref{ex1.triangle} and \ref{ex1.square}
	show the  relative errors and order of convergence rates for the variable $\bu_\disc$ on triangular and square grids. As seen in the table, we obtain linear (in $H^1$-like norm) and sub-linear convergence rates (in $H^2$-like norm) for triangular grids, and quadratic order of convergence for square grids. This behaviour has already been observed in \cite{biharmonicFV}. With respect to $L^2$ norm, quadratic (or slightly better) order of convergence is obtained. These numerical order of convergence are better than the orders of convergences from the theoretical analysis, see Remark \ref{FVM.remark}. This is somehow expected as, due to the difficulty of finding a proper interpolant for this very low-order method \cite{biharmonicFV}, the theoretical rates are much below than the actual rates.
	
	\begin{table}[h!!]
		\caption{\small{(FV) Convergence results, Example 1, triangular grids (\texttt{mesh1} family)}}
		{\small{\footnotesize
				\begin{center}
					\begin{tabular}{ ||c|c|c||c| c||c| c ||c|c||}
						\hline
	$h$ &$\textbf{nu}$&$\textbf{nnz}$ &$\err_\disc(\bu)$ & Order &$\err_\disc(\nabla\bu)$ & Order  &$\err_\disc(\Delta\bu)$ & Order  \\ 
						\hline\\[-10pt]  &&\\[-9pt]
0.250000& 56 &   392& 0.137345&           -&   0.256342&      -&    0.162222&            -\\
0.125000& 224&  1896&  0.031705&   2.1150&   0.131915&   0.9585&   0.071457&   1.1828\\
0.062500& 896&   8264&  0.007400&   2.0991&   0.066136&   0.9961&   0.038596&   0.8886\\
0.031250& 3584&  34440&  0.001691&   2.1297&   0.033067&   1.0000&   0.022662&   0.7682\\
0.015625& 14336& 140552&  0.000352&   2.2644&   0.016528&   1.0005&   0.014158&   0.6786\\
0.007813& 57344& 567816&  0.000056&   2.6449&   0.008262&   1.0004&   0.009281&   0.6092\\
						
						\hline				
					\end{tabular}
				\end{center}	}} \label{ex1.triangle}
	\end{table}			
			
	\begin{table}[h!!]
	\caption{\small{(FV) Convergence results, Example 1, square grids (\texttt{mesh2} family)}}
				{\small{\footnotesize
						\begin{center}
							\begin{tabular}{ ||c|c|c||c| c||c| c ||c|c|| }
								\hline
$h$ &$\textbf{nu}$&$\textbf{nnz}$ &$\err_\disc(\bu)$ & Order &$\err_\disc(\nabla\bu)$ & Order  &$\err_\disc(\Delta\bu)$ & Order  \\ 
								\hline\\[-10pt]  &&\\[-9pt]
0.353553& 16  & 56 & 0.328639&       -&  0.417244&      - &   0.260189&     -\\
0.176777& 64 & 472&  0.081325&   2.0147&   0.107484&   1.9568&   0.062624&  2.0548\\
0.088388& 256&2552&  0.020161&   2.0121&   0.026808&   2.0034&   0.015430&  2.0210\\
0.044194& 1024& 11704&  0.005028&   2.0035&   0.006694&   2.0018&   0.003842&  2.0057\\
0.022097& 4096& 49976&  0.001256&   2.0009&   0.001673&   2.0005&   0.000960&   2.0015\\
0.011049&  16384& 206392& 0.000314&   2.0002&   0.000418&   2.0001&   0.000240&   2.0004\\
								\hline				
							\end{tabular}
			\end{center}	}}\label{ex1.square}
	\end{table}
					
\subsubsection{Example 2}\label{ex2}
In this example, we perform the numerical experiment for the exact solution given by $\bu(x,y) = x^2y^2(1-x)^2(1-y)^2(\cos(2\pi x)+\sin(2\pi y))$. The errors in the energy norm, $H^1$ norm and the $L^2$ norm, together with their orders of convergence, are presented in Tables \ref{ex2.triangle} and \ref{ex2.square}. The results are similar to those for Example 1.
					
\begin{table}[h!!]
	\caption{\small{(FV) Convergence results, Example 2, triangular grids (\texttt{mesh1} family)}}
				{\small{\footnotesize
	\begin{center}
		\begin{tabular}{ ||c|c|c||c| c||c| c ||c|c|| }
										\hline
$h$&$\textbf{nu}$&$\textbf{nnz}$  &$\err_\disc(\bu)$ & Order &$\err_\disc(\nabla\bu)$ & Order  &$\err_\disc(\Delta\bu)$ & Order  \\ 
			\hline\\[-10pt]  &&\\[-9pt]
0.250000&   56  &392&  0.418276&      -&     0.533799&     -&  0.274105&             -\\
0.125000&  224  &1896&  0.075761&   2.4649&   0.204870&   1.3816&   0.101375&   1.4350\\
0.062500&   896  &8264& 0.013663&   2.4712&   0.093729&   1.1281&   0.044254&   1.1958\\
0.031250&  3584 & 34440& 0.003218&   2.0862&   0.046056&   1.0251&   0.021933&   1.0127\\
0.015625& 14336  & 140552&  0.000784&   2.0365&   0.022932&   1.0060& 0.011500&   0.9315\\
0.007813& 57344& 567816&  0.000191&   2.0414&   0.011454&   1.0015&   0.006323&   0.8630\\
										\hline				
			\end{tabular}
		\end{center}	}}\label{ex2.triangle}
\end{table}
							
	\begin{table}[h!!]
		\caption{\small{(FV) Convergence results, Example 2, square grids (\texttt{mesh2} family)}}
				{\small{\footnotesize
			\begin{center}
		\begin{tabular}{ ||c|c|c||c| c||c| c ||c|c||}
						\hline
$h$&$\textbf{nu}$&$\textbf{nnz}$  &$\err_\disc(\bu)$ & Order &$\err_\disc(\nabla\bu)$ & Order  &$\err_\disc(\Delta\bu)$ & Order  \\ 	
			\hline\\[-10pt]  &&\\[-9pt]
0.353553&    16 &   56&  1.333981&         -&   0.745194&     -&   0.773521&      -\\
0.176777&  64&  472&  0.223384&   2.5781&   0.135128&   2.4633&   0.175192&   2.1425\\
0.088388& 256 & 2552&  0.050527&   2.1444&   0.030239&   2.1599&   0.042123&   2.0563\\
0.044194& 1024& 11704&  0.012331&   2.0347&   0.007339&   2.0427&   0.010416&   2.0158\\
0.022097& 4096& 49976&  0.003065&   2.0086&   0.001821&   2.0109&   0.002597&   2.0041\\
0.011049& 16384& 206392& 0.000765&   2.0021&   0.000454&   2.0027&   0.000649&   2.0010\\
												\hline				
						\end{tabular}
	\end{center}	}}\label{ex2.square}
	\end{table}

\subsubsection{Example 3}\label{ex3}
The numerical results obtained  for $\bu(x,y) =  x^3y^3(1-x)^3(1-y)^3(\exp(x)\sin(2\pi x)+\cos(2\pi x))$ are shown in Tables \ref{ex3.triangle} and \ref{ex3.square} respectively. As in Examples 1 and 2, the theoretical rates of convergence are confirmed by these numerical outputs, except that on this test a real linear order of convergence is attained in the $H^2$-like norm.
\begin{table}[h!!]
	\caption{\small{(FV) Convergence results, Example 3, triangular grids (\texttt{mesh1} family)}}
		{\small{\footnotesize
	\begin{center}
\begin{tabular}{ ||c|c|c||c| c||c| c ||c|c||}
		\hline
$h$&$\textbf{nu}$&$\textbf{nnz}$  &$\err_\disc(\bu)$ & Order &$\err_\disc(\nabla\bu)$ & Order  &$\err_\disc(\Delta\bu)$ & Order  \\ 
	\hline\\[-10pt]  &&\\[-9pt]
0.250000&   56&  392&  0.637895&         -&    0.825992&      -&  0.423933&    -\\
0.125000&  224&1896&  0.050763&   3.6515&   0.220328&   1.9065&   0.096604&   2.1337\\
0.062500&896&   8264&   0.013330&   1.9291&   0.097939&   1.1697&   0.045854&   1.0750\\
0.031250&3584& 34440&   0.003160&   2.0765&   0.047945&   1.0305&   0.021417&   1.0983\\
0.015625& 14336& 140552&  0.000786&   2.0084&   0.023857&   1.0070&   0.010550&   1.0215\\
0.007813& 57344& 567816&  0.000196&   2.0016&   0.011914&   1.0017&   0.005257&   1.0049\\
						\hline				
					\end{tabular}
				\end{center}	}}\label{ex3.triangle}
\end{table}						
\begin{table}[h!!]
\caption{\small{(FV) Convergence results, Example 3, square grids (\texttt{mesh2} family)}}
	{\small{\footnotesize
	\begin{center}
		\begin{tabular}{ ||c|c|c||c| c||c| c ||c|c|| }
				\hline
$h$ &$\textbf{nu}$&$\textbf{nnz}$ &$\err_\disc(\bu)$ & Order &$\err_\disc(\nabla\bu)$ & Order  &$\err_\disc(\Delta\bu)$ & Order  \\ 
			\hline\\[-10pt]  &&\\[-9pt]
0.353553&  16&  56&  2.478402&         -&  1.405462&       -&  1.140625&     -\\
0.176777&  64& 472& 0.242959&   3.3506&   0.113945&   3.6246&   0.196693&   2.5358\\
0.088388&  256& 2552& 0.050784&   2.2583&   0.022495&   2.3406&   0.049149&   2.0007\\
0.044194& 1024& 11704&  0.012212&   2.0561&   0.005577&   2.0120&   0.012217&   2.0083\\
0.022097&  4096& 49976&  0.003025&   2.0133&   0.001396&   1.9982&   0.003049&   2.0026\\
0.011049& 16384& 206392& 0.000755&   2.0033&   0.000349&   1.9993&   0.000762&   2.0007\\
				\hline				
				\end{tabular}
			\end{center}	}}\label{ex3.square}
\end{table}
\medskip

Comparing Table \ref{table.comparsion1} and the Tables for FV, we see that the GR method based on biorthogonal reconstruction has only few unknowns (number of internal vertices) but leads to a large stencil for each of them whereas the FV has more unknowns (number of cells) but produces a much sparser matrix. Looking for example at the finest GR mesh and the finest triangular FV mesh, we notice that the meshes have similar sizes $h$ and the matrices have similar complexity \textbf{nnz}, but the FV accuracy in $L^2$- and $H^2$-like norms is much better than the GR method; this is expected since the FV method has a number of unknowns \textbf{nu} more than 3.5 times larger than that of GR. However, the super-convergence property of the gradient reconstruction gives a clear advantage to GR for the $H^1$-like norm. For a similar number of unknowns \textbf{nu} (which means a matrix that is much cheaper to solve for the FV method than the GR method, due to a reduced \textbf{nnz}), the FV method still has a clear advantage in the $L^2$ norm over the GR method, but similar accuracy in the $H^2$-like norm (compare the results for the 5th mesh in the \texttt{mesh1} family with the finest mesh used for the GR method); the GR method however still preserves a clear lead on the $H^1$-like norm error.

\section{Classical FE schemes fitting into the HDM}\label{sec.fem}

We show here that some known FE schemes fit into the Hessian discretisation method, that is, they are Hessian schemes for particular choices of Hessian discretisations.

\subsection{Conforming methods}\label{sec:conf}

For conforming finite elements, we require our finite element space $V_h$ to be a subspace
of the underlying Hilbert space $H^2_0(\O)$. We can then define a Hessian discretisation by $X_{\disc,0}=V_h$ and, for $v\in X_{\disc,0}$, $\Pi_\disc v=v$, $\nabla_\disc v=\nabla v$ and $\hbd v=\hb v$. The estimates on $C_\disc^B$,
$S_\disc^B$ and $W_\disc^B$ easily follow:
\begin{itemize}
	\item $C_\disc^B$ is bounded by the constant of the continuous Poincar\'e inequality in $H^2_0(\O)$.
	\item Standard approximation properties (see, e.g., \cite{ciarlet1978finite}) yield, for almost-affine families of FE, estimates on the interpolation error $S_\disc^B$.
	\item Integration-by-parts in $H^2_0(\O)$ shows that $W_\disc^B(\xi)=0$ for all $\xi \in \wdspace(\O)$.
\end{itemize}

We briefly describe hereafter three finite elements which meet this requirement. The reader is referred to \cite{ciarlet1978finite} for details.

\smallskip

\emph{The Argyris triangle :}
The Argyris triangle is a $C^1$ element which uses a complete
polynomial of degree five. The degrees of freedom consist of function values and
first and second derivatives at the vertices in addition to normal derivatives at the
midpoints of the sides. One difficulty with the Argyris triangle is that there are 21 degrees of freedom per triangle. A modification to the Argyris triangle is the Bell's element which suppresses the values of the normal slopes at the nodes at the three midpoint sides, reducing the number of degrees of freedom to 18 per element.
\smallskip

\emph{Hsieh-Clough-Toucher triangles :}
In the Hsieh-Clough-Tocher (HCT) triangle, the triangle is first decomposed into three triangles by connecting the barycenter of the given triangle with each of its vertices. On each of the subtriangles a cubic polynomial is constructed so that the resulting function is $C^1$ on the original triangle. There are a total of 12 degrees of freedom per triangle, which consist of the function values and first partial derivatives at the three vertices of the original triangle in addition to the normal derivative at the midpoints of the sides of the original triangle.
\medskip

\subsection{An example of non-conforming method: the Adini rectangle}
Assume that $\O$ can be covered by mesh $\mesh$ made up of rectangles (we restrict the presentation to $d=2$ for simplicity). The element $\cell$ consists of a rectangle with vertices $\{a_i, 1 \le i \le 4\}$;
the space $\mathbb{P}_\cell$ is given by $\mathbb{P}_\cell = \mathbb{P}_3 \oplus \{x_1x^3_2\} \oplus\{x^3_1x_2\}$, by which we mean polynomials of degree $\le$ 4 whose only fourth-degree terms are those
involving $x_1x^3_2$
and $x^3_1x_2$. Thus $\mathbb{P}_3 \subset \mathbb{P}_\cell$. The set of degrees of
freedom in each cell is
$$\Sigma_\cell=\bigg\{p(a_i), \frac{\partial p}{\partial x_1}(a_i), \frac{\partial p}{\partial x_2}(a_i); 1\le i \le 4\,,\;p \in \mathbb{P}_\cell\bigg\}.$$
The global approximation space is then given by
\begin{align*}
V_h & =: \{v_h \in L^2(\O);\,\, v_h\rvert_\cell \in \mathbb{P}_\cell \,\forall \,\cell \in \mesh, v_h \mbox{ and }\nabla v_h \mbox{ are continuous at }\\ &\qquad\mbox{ the vertices of elements in } \mesh, v_h \mbox{ and } \nabla v_h \mbox{ vanish at vertices on }\partial \O\}.
\end{align*}
Note that $V_h \subset H^1_0(\O) \cap C^0(\overline{\O})$.

\begin{definition}[Hessian discretisation for the Adini rectangle]\label{def.HD.Adinis}
	Each $v_\disc\in X_{\disc,0}$  is a vector of three values at each vertex of the mesh (with zero values at boundary vertices), corresponding to function and gradient values, $\Pi_\disc v_\disc$ is the function such that $(\Pi_\disc v_\disc)_{|K}\in \mathbb{P}_\cell$ and its gradient takes the values at the vertices dictated by $v_\disc$, $\nabla_\disc v_\disc=\nabla(\Pi_\disc v_\disc)$ and $\hbd v_\disc= \hb_{\mesh}(\Pi_\disc v_\disc)$ is the broken $\hb$ ($\hd$ is the broken $\hessian$).
\end{definition}

We assume that the mesh is regular, that is, \eqref{reg:mesh} holds with $\eta$ not depending on the mesh.

\begin{theorem}\label{thm:HD.Adinis}
	Let $\disc$ be a $B\textendash$Hessian discretisation in the sense of Definition \ref{def.HD.Adinis} with $B$ satisfying the coercive property. Then, there exists a constant $C$, not depending on $\disc$, such that
	\begin{itemize}
		\item  $C_\disc^B \le C$,
		\item  $\forall\varphi \in H^3(\O)\cap H^2_0(\O)$, $S_{\disc}^B(\varphi) \le Ch\norm{\varphi}{H^3(\O)}$,
		\item $\forall \xi \in H^2(\O)^{d \times d}$, $W_{\disc}^B(\xi) \le Ch \norm{\xi}{H^2(\O)^{d \times d}}.$
	\end{itemize}
\end{theorem}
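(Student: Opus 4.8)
The plan is to bound the three accuracy measures $C_\disc^B$, $S_\disc^B$ and $W_\disc^B$ separately, as was done for conforming elements in Section~\ref{sec:conf}, while keeping track of the two features of the Adini space: $V_h\subset H^1_0(\O)$ but $V_h\not\subset H^2(\O)$, and gradients of functions in $V_h$ are continuous only at the vertices of $\mesh$. Write $\hd v_\disc$ for the broken Hessian of $\Pi_\disc v_\disc$, so that $\hbd v_\disc=B\,\hd v_\disc$ and $\|\hd v_\disc\|^2=\sum_{\cell\in\mesh}\|\hessian(\Pi_\disc v_\disc)\|_{L^2(\cell)}^2$. For the \emph{coercivity}, take $v_\disc\in X_{\disc,0}$ and $v=\Pi_\disc v_\disc\in H^1_0(\O)$. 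Using that $A=B^{\tau}B$ is coercive on $\symd(\R)$, i.e.\ $A\eta:\eta=|B\eta|^2\ge C_B^2|\eta|^2$ (as in the examples of Section~\ref{sec:examples}), we get $\|\hbd v_\disc\|^2=\int_\O A\,\hd v_\disc:\hd v_\disc\,\d\x\ge C_B^2\,\|\hd v_\disc\|^2$; in particular $\|\hbd\cdot\|$ is a norm on $X_{\disc,0}$, since $\|\hd v_\disc\|=0$ forces $\Pi_\disc v_\disc$ to be piecewise affine, hence — by vertex-continuity of its gradient and the zero boundary values — constant and then zero. Since $\nabla_\disc v_\disc=\nabla v$ is the weak gradient, $\|\Pi_\disc v_\disc\|=\|v\|\le{\rm diam}(\O)\|\nabla v\|$ by the continuous Poincar\'e inequality, and a discrete Poincar\'e inequality for the piecewise-polynomial field $\nabla v$ — whose jumps across interior faces vanish at the endpoints of those faces, and which vanishes at boundary vertices — yields $\|\nabla v\|\le C\|\hd v_\disc\|$, with $C$ depending only on the mesh-regularity parameter $\eta$ of \eqref{reg:mesh}. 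Hence $\|\Pi_\disc v_\disc\|+\|\nabla_\disc v_\disc\|\le C\|\hbd v_\disc\|$ and $C_\disc^B\le C$.

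For the \emph{consistency}, let $\varphi\in H^3(\O)\cap H^2_0(\O)$. Since $H^3(\O)\hookrightarrow C^1(\overline\O)$ in dimension $2$, the Adini nodal interpolant $\mathcal I_h\varphi$ is well defined, and the boundary conditions $\varphi=|\nabla\varphi|=0$ on $\partial\O$ force the $w\in X_{\disc,0}$ carrying the nodal values of $\varphi$ to satisfy $\Pi_\disc w=\mathcal I_h\varphi$. Because $\mathbb{P}_3\subset\mathbb{P}_\cell$, the standard interpolation theory for almost-affine families \cite{ciarlet1978finite}, applied on each cell and summed, gives $\|\mathcal I_h\varphi-\varphi\|\le Ch^3\|\varphi\|_{H^3(\O)}$, $\|\nabla\mathcal I_h\varphi-\nabla\varphi\|\le Ch^2\|\varphi\|_{H^3(\O)}$ and $\|\hd w-\hessian\varphi\|\le Ch\|\varphi\|_{H^3(\O)}$. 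Testing \eqref{def.SD} with $w$ and using $\|\hbd w-\hb\varphi\|=\|B(\hd w-\hessian\varphi)\|\le\|B\|\,\|\hd w-\hessian\varphi\|$ ($B$ being constant) yields $S_\disc^B(\varphi)\le Ch\|\varphi\|_{H^3(\O)}$.

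The \emph{limit-conformity} bound is the crux. Fix $\xi\in H^2(\O)^{d\times d}$ and $v_\disc\in X_{\disc,0}$, $v=\Pi_\disc v_\disc$; with $A=B^{\tau}B$ constant, $B\xi:\hbd v_\disc=A\xi:\hd v_\disc$ and $\hessian:B^{\tau}B\xi=\hessian:A\xi=\div(\div(A\xi))\in L^2(\O)$. Since $v\in H^1_0(\O)$, one integration by parts gives $\int_\O(\hessian:A\xi)v\,\d\x=-\int_\O\div(A\xi)\cdot\nabla v\,\d\x$, while integrating by parts cell by cell gives $\sum_{\cell}\int_\cell A\xi:\hessian v\,\d\x=-\int_\O\div(A\xi)\cdot\nabla v\,\d\x+\sum_{\cell}\int_{\partial\cell}(A\xi\,n_\cell)\cdot\nabla v\,\d s$; subtracting,
\begin{equation*}
\int_\O(\hessian:B^{\tau}B\xi)\Pi_\disc v_\disc\,\d\x-\int_\O B\xi:\hbd v_\disc\,\d\x=-\sum_{\edge\in\edges}\int_\edge (A\xi\,n_\edge)\cdot[\nabla v]_\edge\,\d s,
\end{equation*}
where $[\nabla v]_\edge$ is the jump of $\nabla v$ across $\edge$ (the one-sided trace when $\edge\in\edgesext$). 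The tangential component of $[\nabla v]_\edge$ vanishes (the tangential derivative of $v$ along $\edge$ is the derivative of the trace of $v$, single-valued across interior faces and zero on $\partial\O$), so only $(n_\edge\cdot A\xi\,n_\edge)[\partial_n v]_\edge$ survives; moreover $[\partial_n v]_\edge$ is a polynomial on $\edge$ vanishing at the two endpoints of $\edge$ (vertex-continuity of Adini gradients, and zero boundary degrees of freedom on $\partial\O$) and, by the decisive weak-continuity property of the Adini rectangle on rectangular meshes (see \cite{ciarlet1978finite}), $\int_\edge[\partial_n v]_\edge\,\d s=0$. Writing $g=n_\edge\cdot A\xi\,n_\edge$ and $\overline g$ for its mean on $\edge$, we then have $\int_\edge g\,[\partial_n v]_\edge\,\d s=\int_\edge (g-\overline g)[\partial_n v]_\edge\,\d s$; a one-dimensional Poincar\'e inequality (using the endpoint vanishing of $[\partial_n v]_\edge$), a scaled trace inequality, an inverse inequality on the cell polynomials, and the $H^2$-regularity of $\xi$ (via constancy of $A$) give $\big|\int_\edge g[\partial_n v]_\edge\,\d s\big|\le Ch_\cell\|A\xi\|_{H^2(\omega_\edge)}\|\hessian v\|_{L^2(\omega_\edge)}$, where $\omega_\edge=\bigcup_{\cell\in\mesh_\edge}\cell$. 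Summing over $\edge$ with Cauchy--Schwarz (bounded overlap of the $\omega_\edge$) and using $\|\hd v_\disc\|\le C_B^{-1}\|\hbd v_\disc\|$ from the first step,
\begin{equation*}
\Big|\int_\O(\hessian:B^{\tau}B\xi)\Pi_\disc v_\disc\,\d\x-\int_\O B\xi:\hbd v_\disc\,\d\x\Big|\le Ch\,\|\xi\|_{H^2(\O)^{d\times d}}\,\|\hbd v_\disc\|,
\end{equation*}
so $W_\disc^B(\xi)\le Ch\|\xi\|_{H^2(\O)^{d\times d}}$ by \eqref{def.WD}. The main obstacle is this last step: isolating the residual as a sum of normal-derivative jumps, invoking the precise weak-continuity property of the Adini element, and extracting the full power $h$ from each edge integral — in particular on the boundary faces, where the zero boundary degrees of freedom must be exploited.
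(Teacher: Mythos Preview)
Your coercivity and consistency arguments are essentially the paper's: the dG-type Poincar\'e inequality for the broken-polynomial field $\nabla v$ (endpoint-vanishing jumps) and the standard Adini interpolation error from \cite{ciarlet1978finite}. These parts are fine.

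The limit-conformity argument, however, has a genuine gap. Your ``decisive weak-continuity property'' $\int_\edge [\partial_n v]_\edge\,\d s=0$ is \emph{not} a property of the Adini rectangle. On a reference edge $\{x_1=1\}$ of $\hat K=[0,1]^2$, the normal derivative $\partial_1\hat v$ restricts to a cubic in $x_2$, and a direct computation shows
\[
\int_0^1 \partial_1\hat v(1,x_2)\,\d x_2 \;-\; \tfrac12\big(\partial_1\hat v(1,0)+\partial_1\hat v(1,1)\big)
\;=\; -\tfrac{c_{12}}{6}-\tfrac{c_{13}}{4},
\]
where $c_{12},c_{13}$ are the coefficients of $x_1x_2^2$ and $x_1x_2^3$ in $\hat v$. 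These are \emph{not} determined by the six DOFs at the two vertices of that edge (the edge trace of $\hat v$ only fixes $c_{02}+c_{12}$ and $c_{03}+c_{13}$). Hence two adjacent Adini elements with matching vertex DOFs can have $\int_\edge[\partial_n v]_\edge\,\d s\neq 0$; the same computation on a boundary edge shows the one-sided integral does not vanish either. Without this mean-zero property your subtraction of $\overline g$ is unjustified, and the endpoint-vanishing of $[\partial_n v]_\edge$ alone yields only an $\mathcal O(1)$ bound, not $\mathcal O(h)$. You may be thinking of the Morley element, where the edge-mean of the normal derivative is a DOF.

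What the paper does instead is reorganise the edge sum \emph{cellwise} and exploit a cancellation between \emph{opposite edges of the same rectangle}. After subtracting the $Q_1$ vertex interpolant $\Lambda_K(\nabla_\disc v_\disc)$ (which has zero jump across every edge), one studies, for $w=\partial_i v$ and $\phi=(A\xi n_{K,\edge})_i$,
\[
\delta_{1,K}(\phi,w)=\int_{\edge_1'}\phi\,(w-\Lambda_K w)\,\d s-\int_{\edge_1''}\phi\,(w-\Lambda_K w)\,\d s,
\]
the \emph{difference} over opposite edges (opposite outward normals). The key algebraic fact---and this is the Adini-specific structure from \cite[Theorem~6.2.3]{ciarlet1978finite}---is that for $\hat w\in\partial_1\mathbb{P}_{\hat K}$ one has $(\hat w-\Lambda_{\hat K}\hat w)|_{x_1=0}=(\hat w-\Lambda_{\hat K}\hat w)|_{x_1=1}$, so $\delta_{1,\hat K}(\hat\phi,\hat w)=0$ whenever $\hat\phi\in\mathbb P_0$. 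Combined with the obvious invariance $\delta_{1,\hat K}(\hat\phi,\hat w)=0$ for $\hat w\in\mathbb P_0$, the bilinear lemma then gives $|\delta_{1,K}(\phi,w)|\le Ch\,|\phi|_{1,K}|w|_{1,K}$, and summing over cells yields $W_\disc^B(\xi)\le Ch\|\xi\|_{H^2(\O)^{d\times d}}$. The extra power of $h$ thus comes from the opposite-edge symmetry of each rectangle, not from a patch-test-type mean-zero jump on individual edges.
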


The properties of Hessian discretisations built on the Adini rectangle follow from
this theorem and Remark \ref{rem:dens}.

\begin{corollary}\label{cor:HD.Adinis}
	Let $(\disc_m)_{m \in \N}$ be a sequence of $B\textendash$Hessian discretisations built on the Adini rectangle, such that $B$ is coercive and the underlying sequence of meshes are regular 
and have a size that goes to $0$ as $m\to\infty$. Then the sequence $(\disc_m)_{m \in \N}$ is coercive, consistent and limit-conforming.	
\end{corollary}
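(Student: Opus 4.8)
The plan is to establish the three estimates separately. Throughout, for $v_\disc\in X_{\disc,0}$ I write $w:=\Pi_\disc v_\disc$, which by construction belongs to $V_h\subset H^1_0(\O)\cap C^0(\overline\O)$ and satisfies $\nabla_\disc v_\disc=\nabla w$ and $\hbd v_\disc=B\hessian_\mesh w$, where $\hessian_\mesh$ denotes the cell-wise (broken) Hessian; the Adini degrees of freedom make both $w$ and $\nabla w$ single-valued at every mesh vertex and zero at every boundary vertex. For the coercivity bound $C_\disc^B\le C$, I would reduce to the broken Poincar\'e--Friedrichs inequality $\norm{w}{}+\norm{\nabla w}{}\le C\norm{\hbd v_\disc}{}$ on $V_h$. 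Since $w\in H^1_0(\O)$, the usual Poincar\'e inequality gives $\norm{w}{}\le C\norm{\nabla w}{}$, so the real content is $\norm{\nabla w}{}\le C\norm{B\hessian_\mesh w}{}$: this is the discrete counterpart of \eqref{coer:B} for the Adini space, a part of the classical convergence theory of the element (see \cite[Chapter~6]{ciarlet1978finite}), obtained by combining the coercivity of $B$, the mesh regularity \eqref{reg:mesh}, the weak continuity of $\nabla w$ at the vertices, and a null-Lagrangian cell-wise integration-by-parts identity relating $\norm{\hessian_\mesh w}{}$ to the energy quantity $\norm{\hbd v_\disc}{}$. Since $\Pi_\disc$ is injective by unisolvence of the Adini element, this inequality also shows that $\hbd v_\disc=0$ forces $v_\disc=0$, i.e. $\norm{\hbd\cdot}{}$ is a norm on $X_{\disc,0}$.

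For the consistency bound, I would take $v_\disc=I_\disc\varphi$, the Adini interpolant of $\varphi\in H^3(\O)\cap H^2_0(\O)$; this is well defined because $H^3(\O)\hookrightarrow C^1(\overline\O)$ in dimension $d=2$, and it lies in $X_{\disc,0}$ since $\varphi$ and $\nabla\varphi$ vanish on $\dr\O$. Standard Bramble--Hilbert and scaling estimates for the Adini element (using $\mathbb{P}_3\subset\mathbb{P}_K$; see \cite[Chapter~6]{ciarlet1978finite}) give
\[
\norm{\Pi_\disc I_\disc\varphi-\varphi}{}+\norm{\nabla(\Pi_\disc I_\disc\varphi)-\nabla\varphi}{}+\norm{\hessian_\mesh(\Pi_\disc I_\disc\varphi)-\hessian\varphi}{}\le Ch\norm{\varphi}{H^3(\O)}.
\]
Because $B$ is constant and bounded, with $\hbd I_\disc\varphi=B\hessian_\mesh(\Pi_\disc I_\disc\varphi)$ and $\hb\varphi=B\hessian\varphi$, plugging this into \eqref{def.SD} yields $S_\disc^B(\varphi)\le Ch\norm{\varphi}{H^3(\O)}$.

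For limit-conformity, fix $\xi\in H^2(\O)^{d\times d}$ and $v_\disc\in X_{\disc,0}$, and set $A=B^\tau B$; since $B\xi:\hbd v_\disc=A\xi:\hessian_\mesh w$, I must bound $E:=\int_\O\big((\hessian:A\xi)\,w-A\xi:\hessian_\mesh w\big)\d\x$. Integrating by parts twice on each cell $K$ (with $n_K$ the outward unit normal) gives
\[
\int_K A\xi:\hessian w\,\d\x=\int_K(\hessian:A\xi)\,w\,\d\x-\int_{\dr K}(\div(A\xi)\cdot n_K)\,w\,\d s+\int_{\dr K}((A\xi)n_K)\cdot\nabla w\,\d s.
\]
Summing over $K\in\mesh$, the $w$-term cancels across interior faces (continuity of $w$) and vanishes on $\dr\O$ (as $w\in H^1_0(\O)$), so $E=-\sum_{K\in\mesh}\int_{\dr K}((A\xi)n_K)\cdot\nabla w\,\d s$. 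Gathering by faces, the tangential component of $\nabla w$ drops out (single-valued on interior faces, zero on boundary faces), and the surviving normal-derivative jump (or boundary trace) vanishes at the endpoints of each face because $\nabla w$ is single-valued at the vertices. I would then estimate these face terms following the classical consistency-error analysis of the Adini element — exploiting the weak continuity of $\nabla w$ at the vertices (and the cancellations it produces over patches of faces) together with a Bramble--Hilbert argument on the data $A\xi$ and discrete trace inequalities — and combine it with the mesh regularity \eqref{reg:mesh} and the coercivity bound $\norm{\nabla w}{}\le C\norm{\hbd v_\disc}{}$ already obtained, to reach $|E|\le Ch\norm{\xi}{H^2(\O)^{d\times d}}\norm{\hbd v_\disc}{}$; the estimate on $W_\disc^B(\xi)$ then follows from \eqref{def.WD}.

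The main obstacle is this last step: it is exactly the (classical, but delicate) analysis of the nonconforming consistency error of the Adini rectangle. The subtlety is that the failure of $H^2$-conformity produces face integrals that cannot be killed by a naive moment cancellation — the jump of the normal derivative of $w$ across a face does not in general have vanishing mean — so they must instead be absorbed using the specific weak-continuity (patch-test) structure of this element. The remaining ingredients — the Poincar\'e and Bramble--Hilbert estimates and the unisolvence of the Adini element — are routine.
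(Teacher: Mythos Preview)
Your plan is correct and follows the paper's route: the corollary is obtained from the three estimates of Theorem~\ref{thm:HD.Adinis} (which you are effectively re-proving) together with the density argument of Remark~\ref{rem:dens}, which you should invoke explicitly since $S_\disc^B$ and $W_\disc^B$ are only estimated on the dense subspaces $H^3(\O)\cap H^2_0(\O)$ and $H^2(\O)^{d\times d}$. The consistency step is identical to the paper's. Two points where the paper's argument is more concrete than, and slightly different from, your sketch:

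\emph{Coercivity.} The paper does not use a null-Lagrangian identity. It controls $\norm{\nabla_\disc v}{}$ via the dG norm of $\nabla w$: since $\llbracket\nabla w\rrbracket$ vanishes at the endpoints of each edge (Adini DOFs), a 1D Poincar\'e inequality along each edge (Lemma~\ref{Poincare_edge}) together with a discrete trace inequality bounds the jump contributions by $\norm{\hessian_\mesh w}{}$, whence $\norm{\nabla_\disc v}{dG}\le C\norm{\hessian_\mesh w}{}$; the broken Sobolev--Poincar\'e inequality $\norm{\cdot}{}\le C\norm{\cdot}{dG}$ then yields $\norm{\nabla_\disc v}{}\le C\norm{\hessian_\mesh w}{}$. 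The passage $\norm{\hessian_\mesh w}{}\le C\norm{B\hessian_\mesh w}{}=C\norm{\hbd v}{}$ is where the coercivity of $B$ enters, and the paper treats it as a pointwise bound rather than via any integration-by-parts mechanism.

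\emph{Limit-conformity.} After the cell-wise integration by parts and the cancellation of the $w$-terms (exactly as you describe), the paper does not split $\nabla w$ into tangential and normal parts. Instead it introduces the $Q_1$ vertex interpolant $\Lambda_h$ of $\nabla w$: being bilinear on each cell and matching $\nabla w$ at the vertices, $\Lambda_h(\nabla w)$ has zero jump across every face, so only $\sum_{K}\sum_{\sigma\in\edgescv}\int_\sigma (A\xi\,n_{K,\sigma})\cdot(\nabla w-\Lambda_K\nabla w)\,\d s$ remains. Pairing the two opposite faces of each rectangle, these differences are estimated by the bilinear lemma \cite[Theorem~4.2.5]{ciarlet1978finite} with two polynomial invariances (preservation of constants by $\Lambda_K$, and an explicit computation on $\partial_i\mathbb{P}_K$ showing that the opposite-face difference vanishes whenever $A\xi\,n$ is constant), giving $Ch\,|\xi|_{H^1(K)}|\nabla w|_{H^1(K)}$ per cell. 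This $Q_1$-interpolant device is precisely the ``classical Adini consistency-error'' mechanism you correctly flag as the main obstacle.
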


\begin{proof}[Proof of Theorem \ref{thm:HD.Adinis}]\textbf{}
	
In this proof, $C > 0$ denotes a generic constant that can change from one line to the other but depends only on $\O$, $d$, $B$ and $\eta$.

\smallskip

$\bullet$ \textsc{Coercivity:} Since $V_h \subset H^1_0(\O),\,$ for $v \in X_{\disc,0}$, the Poincar\'e inequality yields $\norm{\Pi_\disc v}{} \le \mbox{diam}(\O)\norm{\nabla_\disc v}{}$, which gives us part of the estimate on $C_\disc^B$. Define the \emph{broken Sobolev} space
$$H^1(\mesh)=\big\{v \in L^2(\O) \,; \,\forall \cell \in \mesh, v_{|\cell} \in H^1(\cell)\big\}$$
and endow it with the dG norm
\be  \label{dg}
\norm{w}{dG}^2:=\norm{\nabla_\mesh w}{}^2 + \sum_{\edge \in \edges}^{}\frac{1}{h_\edge}\norm{\llbracket w \rrbracket}{L^2(\edge)}^2,
\ee
where 
\begin{equation*}
h_\sigma=\left\{\begin{array}{ll}
\min(h_\cell,h_L)& \mbox{ if }\edge \in \edgesint ,\, \mesh_{\edge}= \{\cell, L\}  \\
h_\cell & \mbox{ if }\edge \in \edgesext,\,\mesh_{\edge}= \cell,
\end{array}\right.
\end{equation*}
and the jump of $w$ is
\[
\llbracket w \rrbracket=
\left\{\begin{array}{ll}
w_{|\cell}- w_{|L}&\mbox{ if } \edge \in \edgesint ,\, \mesh_{\edge}= \{\cell, L\}  \\
w_{|\cell} &\mbox{ if }\edge \in \edgesext,\,\mesh_{\edge}= \cell.
\end{array}\right.
\]
If $\llbracket w \rrbracket = 0$ at the vertices of $\edge$ then, by the Poincar\'e inequality in $H^1_0(\edge)$ (Lemma \ref{Poincare_edge}),
\be \label{Poincare_face}
\norm{\llbracket w \rrbracket}{L^2(\edge)}\le C h_\edge \norm{\nabla_\mesh \llbracket w \rrbracket}{L^2(\edge)^d}. 
\ee
If $\edge \in \edgesint$ with $\mesh_{\edge}= \{\cell, L\}$ then $\llbracket w\rrbracket=0$ at the vertices of $\edge$, and \eqref{Poincare_face} combined with the trace inequality \cite[Lemma 1.46]{DG_DA} therefore give
\begin{align}
\norm{\llbracket w \rrbracket}{L^2(\edge)} & \le C h_\edge( \norm{\nabla_\mesh w_{\lvert \cell}}{L^2(\edge)^d} + \norm{\nabla_\mesh w_{\lvert L}}{L^2(\edge)^d} ) \nonumber\\
& \le C_{\mbox{\scriptsize tr}}h_\edge(h_{\cell}^{-1/2} \norm{\nabla_\mesh w}{L^2(\cell)^d} +h_{L}^{-1/2} \norm{\nabla_\mesh w}{L^2(L)^d} ), \label{norm_jump}
\end{align}
where $C_{\mbox{\scriptsize tr}}$ depends only on $d$ and the mesh regularity parameter $\eta$.
Take $v \in X_{\disc,0}$. Since $\nabla_\disc v$ is continuous at the vertices of elements in $\mesh$ and $\nabla_\disc v$ vanish at vertices along $\partial \O$, choosing $w=\nabla_\disc v$ in \eqref{Poincare_face} and \eqref{norm_jump} yields
$$\norm{\llbracket \nabla_\disc v \rrbracket}{L^2(\edge)^d}
\le C_{\mbox{\scriptsize tr}} h_\edge\Big(h_{\cell}^{-1/2} \norm{\nabla_\mesh (\nabla_\disc v)}{L^2(\cell)^{d \times d}} +h_{L}^{-1/2} \norm{\nabla_\mesh (\nabla_\disc v)}{L^2(L)^{d \times d}} \Big).$$
Recalling the definition \eqref{dg} of the dG norm, the above inequality and the coercivity property of $B$ yield
\begin{align*}
\norm{\nabla_\disc v}{dG}^2\le{}& \norm{\nabla_\mesh (\nabla_\disc v)}{}^2 \\
&+ 2 C_{\mbox{\scriptsize tr}}\sum_{\edge \in \edges}^{}h_\edge\Big(h_{\cell}^{-1} \norm{\nabla_\mesh (\nabla_\disc v)}{L^2(\cell)^{d \times d}}^2 +h_{L}^{-1} \norm{\nabla_\mesh (\nabla_\disc v)}{L^2(L)^{d \times d}}^2 \Big) \\
 \le{}&\norm{\nabla_\mesh (\nabla_\disc v)}{}^2 +C \sum_{\cell \in \mesh}^{}\norm{\nabla_\mesh (\nabla_\disc v)}{L^2(\cell)^{d \times d}}^2\\
\le{}& C \norm{\hessian_{\mesh}(\Pi_\disc v)}{}^2 \le C\varrho^{-2} \norm{\hb_{\mesh}(\Pi_\disc v)}{}^2= C\varrho^{-2} \norm{\hbd v}{}^2.
\end{align*}
Using the fact that $\norm{w}{} \le C \norm{w}{dG}$  whenever $w$ is a broken polynomial on $\mesh$ (see \cite[Theorem 5.3]{DG_DA}), we infer that $\norm{\nabla_\disc v}{} \le C \varrho^{-1}\norm{\hbd v}{}$, which concludes the estimate on $C_\disc^B$.

\smallskip

$\bullet$ \textsc{Consistency:} Consistency follows from the affine property of the family of Adini rectangles. Using \cite[Theorem 3.1.5, Chapter 3]{ciarlet1978finite}, for $\varphi \in H^3(\O)\cap H^2_0(\O)$, we obtain
\begin{align*}
&\inf_{w\in X_{\disc,0}} \norm{\hbd w-\hb \varphi}{} \le Ch\lvert \phi \rvert_{3,\O}\,,\quad
\inf_{w\in X_{\disc,0}} \norm{\nabla_\disc w-\nabla \varphi }{} \le Ch^2\lvert \phi \rvert_{3,\O}\\
&\mbox{ and }\quad \inf_{w\in X_{\disc,0}} \norm{\Pi_\disc w- \varphi}{} \le Ch^3\lvert \phi \rvert_{3,\O},
\end{align*}
which implies $S_\disc^B(\varphi)\le C h \lvert \phi \rvert_{3,\O}$.
\smallskip

$\bullet$ \textsc{Limit-conformity}: for $\xi \in H^2(\O)^{d\times d}$ and $v_\disc \in X_{\disc,0}$,
cellwise integration-by-parts (see Lemma \ref{IBP}) yields
\begin{align*}
\int_{\O}^{}(\hessian:B^{\tau}B\xi)\Pi_\disc v_\disc \d\x&=\sum_{\cell \in \mesh}^{} \int_{\cell}^{}(\hessian:A\xi)\Pi_\disc v_\disc \d\x\\
&=\int_{\O}^{}A\xi:\hessian_\disc v_\disc \d\x-\sum_{\cell \in \mesh}^{}
\int_{\partial \cell}^{}(A\xi n_\cell)\cdot\nabla_\disc v_\disc \d s(\x) \\
& \qquad + \sum_{\cell \in \mesh}^{} \int_{\partial \cell}^{}(\div(A\xi)\cdot n_\cell)\Pi_\disc v_\disc \d s(\x).
\end{align*}
For $\cell \in \mesh$ and $\edge \in \edgescv$, let $n_{\cell,\edge}$ be the unit vector normal to $\edge$ outward to $\cell$. For all $\edge \in \edges$, we choose an orientation (that is, a cell $K$ such that $\edge\in\edges_K$) and we set $n_\edge=n_{\cell,\edge}$. We then set $\llbracket w \rrbracket=w_{|\cell}- w_{|L}$ if $\edge\in\edgesint$ with $\mesh_\edge=\{K,L\}$, and $\llbracket w \rrbracket=w_{|\cell}$ if $\edge\in\edgesext$  with $\mesh_\edge=K$. Then
\be
\begin{aligned}
\int_{\O}&(\hessian:A\xi)\Pi_\disc v_\disc \d\x-\int_{\O}^{}A\xi:\hessian_\disc v_\disc \d\x\\
={}&-\sum_{\edge \in \edges}^{} 
\int_{\edge}^{}(A\xi n_\edge)\cdot \llbracket \nabla_\disc v_\disc \rrbracket \d s(\x) 
+ \sum_{\edge \in \edges}^{} \int_{\edge}^{}(\div(A\xi) \cdot n_\edge)\llbracket \Pi_\disc v_\disc \rrbracket \d s(\x).\label{limit_conformity}
\end{aligned}
\ee
Since $\Pi_\disc v_\disc \in H^1_0(\O)\cap C(\overline{\O})$, $\llbracket \Pi_\disc v_\disc \rrbracket=0$. Let $\Lambda_\cell$ denote the  $Q_1$ interpolation operator associated with the values at the four vertices of $\cell$, and $\Lambda_h$ be the patched interpolator such that $(\Lambda_h)_{|\cell}=\Lambda_\cell$ for all $\cell$. $\Lambda_h(\nabla_\disc v_\disc)$ takes the values of $\nabla_\disc v_\disc$ at the vertices, so it is continuous at internal vertices and vanishes at the boundary vertices. Hence, for any $\edge\in\edges$, $\llbracket\Lambda_h(\nabla_\disc v_\disc)\rrbracket$ vanishes on $\edge$ since it is linear on this edge and vanishes at its vertices.
As a consequence,
\begin{align}
\int_{\O}^{}(\hessian:A\xi)&\Pi_\disc v_\disc \d\x-\int_{\O}^{}A\xi:\hessian_\disc v_\disc \d\x\nonumber\\
&=-\sum_{\edge \in \edges}^{} 
\int_{\edge}^{}(A\xi n_\edge)\cdot \llbracket \nabla_\disc v_\disc-\Lambda_h(\nabla_\disc v_\disc ) \rrbracket \d s(\x) \nonumber \\ 
&=-\sum_{\cell \in \mesh}^{}\sum_{\edge \in \edgescv}^{} 
\int_{\edge}^{}A\xi n_{\cell,\edge}\cdot \Big(\nabla_\disc v_\disc-\Lambda_\cell(\nabla_\disc v_\disc ) \Big) \d s(\x).\label{adini.wd}
\end{align}
Setting $\varphi=A\xi n_{\cell,\edge}$ and $w=\nabla_\disc v_\disc$, a change of variables yields
\be
\int_{\edge\in \edgescv}^{}\varphi\cdot \big(w-\Lambda_\cell(w) \big) \d s(\x)=|\edge|\int_{\hat{\edge}\in \edges_{\hat{\cell}}}^{}\hat{\varphi}\cdot \big(\hat{w}-\Lambda_{\hat{\cell}}(\hat{w} ) \big) \d s(\x),\label{adini.ref.cell}
\ee
where $\hat{\cell}$ is the reference finite element. Let $\edgescv=\{\edge_1^{'},\edge_2^{'},\edge_1^{''},\edge_2^{''} \}$ such that $|\edge_1^{'}|=|\edge_1^{''}|=h_1$ and $|\edge_2^{'}|=|\edge_2^{''}|=h_2$. Let us consider 
\be
\delta_{1,\cell}(\phi,v)= \int_{\edge_1^{'}}^{}\phi \big(v-\Lambda_\cell(v) \big) \d s(\x)-\int_{\edge_1^{''}}^{}\phi \big(v-\Lambda_\cell(v) \big) \d s(\x),\label{adini.delta}
\ee
for $\phi \in H^1(\cell)$ and $v \in \partial_1\mathbb{P}_\cell$. The steps in \cite[Theorem 6.2.3]{ciarlet1978finite} show that $\delta_{1,\cell}(\phi,v) \le Ch |{\phi}|_{1,{\cell}}|{v}|_{1,{\cell}}$. For the sake of completeness, let us briefly recall the argument. Using changes of variables, $\delta_{1,\cell}(\phi,v)=h_1\delta_{1,\hat{\cell}}(\hat{\phi},\hat{v})$. Since $\mathbb{P}_0 \subset Q_1$, which is preserved by $\Lambda_\cell$, for all $\hat{v} \in \mathbb{P}_0$ and $\hat{\phi} \in H^1(\hat{\cell})$ we have  $\delta_{1,\hat{\cell}}(\hat{\phi},\hat{v})=0$ (first polynomial invariance).
Let us now prove that the same relation holds if $\hat{\phi} \in \mathbb{P}_0$ and $\hat{v} \in \partial_1 P_{\hat{\cell}}$. Since $\hat{\phi} \in \mathbb{P}_0$, its value on $\hat{\cell}$ is
a constant, say, equal to $a_0$. Since $\hat{v} \in \partial_1 P_{\hat{\cell}}$ we have
$$\hat{v}=b_0+b_1x_1+b_2x_2+b_3x_1^2+b_4x_1x_2+b_5x_2^2+b_6x_1^2x_2+b_7x_2^3.$$
Taking the values at the four vertices, we get
$$\Lambda_{\hat{\cell}}\hat{v}=b_0+(b_1+b_3)x_1+(b_2+b_5+b_7)x_2+(b_4+b_6)x_1x_2.$$
Assuming without loss of generality that $\edge_1^{'}$ is the line $x_1 = 1$ and $\edge_1^{''}$ is the line $x_1 = 0$, we infer
$$(\hat{v}-\Lambda_{\hat{\cell}}\hat{v})_{|x_1=0}=-(b_5+b_7)x_2+b_5x_2^2+b_7x_2^3,$$
$$(\hat{v}-\Lambda_{\hat{\cell}}\hat{v})_{|x_1=1}=-(b_5+b_7)x_2+b_5x_2^2+b_7x_2^3.$$
The relation $\delta_{1,\hat{\cell}}(\hat{\phi},\hat{v})=0$ (second polynomial invariance) then follows from
\begin{align*}
\int_{\edge_1^{'}}\hat{\phi}(\hat{v}-\Lambda_{\hat{\cell}}\hat{v})\d s(\x)={}&\int_{0}^{1} a_0(-(b_5+b_7)x_2+b_5x_2^2+b_7x_2^3)\dx_2\\
={}&\int_{\edge_1^{''}}\hat{\phi}(\hat{v}-\Lambda_{\hat{\cell}}\hat{v})\d s(\x).
\end{align*}
The bilinear form $\delta_{1,\hat{\cell}}(\hat{\phi},\hat{v})$ is continuous over the space $H^1(\hat{\cell}) \times \partial_1 P_{\hat{\cell}}$ by the trace theorem. Using the bilinear lemma \cite[Theorem 4.2.5]{ciarlet1978finite}, we deduce from the two polynomial invariances the existence of a constant $C$ such that $|\delta_{1,\hat{\cell}}(\hat{\phi},\hat{v})|\le C|\hat{\phi}|_{1,\hat{\cell}}|\hat{v}|_{1,\hat{\cell}}$ for all $\hat{\phi} \in H^1(\hat{\cell}),\, \hat{v} \in \partial_1 P_{\hat{\cell}}$. A direct change of variables shows that
 $$|\hat{\phi}|_{1,\hat{\cell}}\le C |{\phi}|_{1,{\cell}}\quad\mbox{ and }\quad |\hat{v}|_{1,\hat{\cell}}\le C |{v}|_{1,{\cell}}.$$
Since $\delta_{1,\cell}(\phi,v)=h_1\delta_{1,\hat{\cell}}(\hat{\phi},\hat{v})$, we infer $\delta_{1,\cell}(\phi,v) \le Ch |{\phi}|_{1,{\cell}}|{v}|_{1,{\cell}}$. Similarly, $\delta_{2,\cell}(\phi,v) \le Ch |{\phi}|_{1,{\cell}}|{v}|_{1,{\cell}}$ (considering integrals over $\edge_2^{'}$ and $\edge_2^{''}$).
Hence, from \eqref{adini.wd}, \eqref{adini.ref.cell} and \eqref{adini.delta},
\begin{align*}
\Big\lvert\int_{\O}^{}&(\hessian:A\xi)\Pi_\disc v_\disc \d\x-\int_{\O}^{}A\xi:\hessian_\disc v_\disc \d\x\Big\rvert \le C\norm{\xi}{H^2(\O)^{d\times d}}h \norm{\hbd v_\disc}{}.
\end{align*}
The proof of the estimate on $W_\disc^B(\xi)$ is complete.
\end{proof}

\appendix

\section{Technical results} \label{appendix}

	\begin{lemma}[{Poincar\'e} inequality along an edge]{} \label{Poincare_edge}
		Let $\edge$ be an edge of a polygonal cell, $w \in H^1(\edge)$ and assume that $w$ vanish at a point on the edge $\edge \in \edges$. Then there exists $ C > 0 $ such that
			$$\norm{w}{L^2(\edge)} \le  h_{\edge} \norm{\partial w}{L^2(\edge)},$$ where $\partial$ denotes the derivative along the edge and $h_\edge$ is the length of the edge.
	\end{lemma}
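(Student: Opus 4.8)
The plan is to reduce the statement to the classical one-dimensional Poincaré inequality on an interval, exploiting that a straight edge is isometric to an interval of length $h_\edge$. First I would fix an arclength parametrisation $\gamma:[0,h_\edge]\to\edge$, which is an isometry, so that $\norm{w}{L^2(\edge)}=\norm{w\circ\gamma}{L^2(0,h_\edge)}$ and $\norm{\partial w}{L^2(\edge)}=\norm{(w\circ\gamma)'}{L^2(0,h_\edge)}$. This replaces the geometric edge by the interval $I=(0,h_\edge)$ and $w$ by a function $v=w\circ\gamma\in H^1(I)$ that vanishes at some point $t_0\in[0,h_\edge]$.

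Next I would use that one-dimensional $H^1$ functions are absolutely continuous, so that for every $t\in I$ the representation $v(t)=v(t_0)+\int_{t_0}^t v'(s)\,ds=\int_{t_0}^t v'(s)\,ds$ holds, the boundary term dropping because $v(t_0)=0$. Applying the Cauchy--Schwarz inequality to this integral gives $|v(t)|^2\le |t-t_0|\int_I|v'(s)|^2\,ds\le h_\edge\norm{v'}{L^2(I)}^2$ for all $t\in I$. Integrating this pointwise bound over $t\in I$ yields $\norm{v}{L^2(I)}^2\le h_\edge^2\norm{v'}{L^2(I)}^2$; taking square roots and undoing the parametrisation gives $\norm{w}{L^2(\edge)}\le h_\edge\norm{\partial w}{L^2(\edge)}$, so in fact the constant $C=1$ works.

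There is essentially no serious obstacle: the only points requiring a little care are the justification of the pointwise representation of $v$ (absolute continuity of one-dimensional $H^1$ functions, and the fact that its value at $t_0$ is well defined), which is standard, and the remark that the arclength parametrisation of a straight edge is an isometry so that the $L^2$ norms and tangential derivatives transfer without extra constants. An alternative route would be to quote a known one-dimensional Poincaré/Friedrichs inequality on the relevant subspace of $H^1(I)$ and then track the dependence of the constant on $|I|=h_\edge$ via the scaling $t\mapsto t/h_\edge$; both approaches lead to the same conclusion.
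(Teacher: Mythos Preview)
Your proof is correct and follows essentially the same argument as the paper: write $w$ at a generic point as the integral of its tangential derivative from the vanishing point, apply Cauchy--Schwarz to bound $|w|^2$ pointwise by $h_\edge\norm{\partial w}{L^2(\edge)}^2$, and integrate over $\edge$. The only difference is that you make the arclength parametrisation and the absolute continuity of one-dimensional $H^1$ functions explicit, whereas the paper works directly on $\edge$; the substance is identical, and in both cases the constant is $C=1$.
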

	\begin{proof}
 Let $m$ denote the point on the edge $\edge$ which satisfies $w(m) = 0$. For $m < x$, we get
 $$w(x) = w(m) + \int_{m}^{x}\partial w(y)\d y = \int_{m}^{x}\partial w(y)\d y.$$
 A use of Cauchy-Schwarz inequality yields
 $$|w(x)|\le |x-m|^{1/2}\bigg(\int_{m}^{x}|\nabla w|^2\d y\bigg)^{1/2}\le \sqrt{h_\edge}\bigg(\int_\edge|\partial w|^2\d y\bigg)^{1/2}.$$
 	Squaring this yields $|w(x)|^2\le h_\edge\int_\edge|\partial w|^2\d y$ and integrating over the edge concludes the proof.
 \end{proof}

		\begin{lemma}[Integration by parts]\label{IBP}
Let $P$ be a fourth order tensor. For $\xi \in H^2({\O})^{d \times d}$ and $\phi \in H^1(\O)$, we have
			$$\int_\O (\hessian:P\xi)\phi=-\int_\O \nabla\phi\cdot\div(P\xi) + \int_{\partial\O} \div(P\xi\cdot n)\phi.$$
For $\psi \in H^2(\O)$,
				$$\int_\O P\xi:\hessian \psi=-\int_\O \nabla\psi\cdot\div(P\xi) + \int_{\partial\O} (\div(P\xi n))\cdot\nabla \psi.$$
For $\zeta\in H^1(\O)^d$,
\[
\int_\O P\xi:\nabla \zeta= -\int_\O \div(P\xi)\cdot \zeta+
\int_ {\partial\O}(\div(P\xi n))\cdot \zeta.
\]
\end{lemma}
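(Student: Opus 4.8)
All three identities are instances of the classical Green's (divergence) formula
\be
\int_\O (\div \mathbf{v})\,\phi\,\d\x = -\int_\O \mathbf{v}\cdot\nabla\phi\,\d\x + \int_{\partial\O}(\mathbf{v}\cdot\bfn)\,\phi\,\d s(\x),
\tag{$\star$}
\ee
valid for $\mathbf{v}\in H^1(\O)^d$ and $\phi\in H^1(\O)$, and the plan is to first establish $(\star)$ and then read off the three statements from it. For $(\star)$ I would argue by density: $\O$ being polytopal (hence Lipschitz), $C^\infty(\overline\O)$ is dense in $H^1(\O)$ and the trace operator $H^1(\O)\to L^2(\partial\O)$ is bounded; for smooth $\mathbf{v},\phi$ the formula $(\star)$ is just the divergence theorem applied to the field $\phi\mathbf{v}$, and both sides of $(\star)$ are continuous bilinear maps on $H^1(\O)^d\times H^1(\O)$ — the boundary term because it pairs two $L^2(\partial\O)$ traces — so the general case follows by passing to the limit. (Alternatively, one simply invokes the Gauss--Green formula for Sobolev fields on Lipschitz domains from a standard reference.) One preliminary observation is needed before the reductions: since $p_{ijkl}=p_{jikl}$, the matrix field $P\xi$ (entries $p_{ijkl}\xi_{kl}$) is symmetric, so $\div(P\xi)$ is unambiguous; moreover $P\xi\in H^2(\O)^{d\times d}$ gives $\div(P\xi)\in H^1(\O)^d$, $\hessian\!:\!P\xi=\div\big(\div(P\xi)\big)\in L^2(\O)$, and the traces of $P\xi$ and of $\div(P\xi)$ lie in $L^2(\partial\O)$, so every integral in the statement is well defined.

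Next I would prove the third identity, the most general one. Writing $\mathbf{c}_j\in H^1(\O)^d$ for the $j$-th column of $P\xi$, one has $P\xi\!:\!\nabla\zeta=\sum_{j=1}^d \mathbf{c}_j\cdot\nabla\zeta_j$; applying $(\star)$ to each pair $(\mathbf{c}_j,\zeta_j)$, summing over $j$, and using $\div\mathbf{c}_j=(\div(P\xi))_j$ together with $\mathbf{c}_j\cdot\bfn=((P\xi)\bfn)_j$, yields exactly $\int_\O P\xi\!:\!\nabla\zeta=-\int_\O\div(P\xi)\cdot\zeta+\int_{\partial\O}((P\xi)\bfn)\cdot\zeta$. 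The second identity then comes for free by specialising to $\zeta=\nabla\psi$, which is legitimate because $\psi\in H^2(\O)$ forces $\nabla\psi\in H^1(\O)^d$: indeed $\nabla(\nabla\psi)=\hessian\psi$, and since $P\xi$ is symmetric the contraction with it is the full Frobenius contraction, so the left-hand side becomes $\int_\O P\xi\!:\!\hessian\psi$ and the boundary term becomes $\int_{\partial\O}((P\xi)\bfn)\cdot\nabla\psi$.

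For the first identity I would instead use $\hessian\!:\!P\xi=\partial_i\partial_j(P\xi)_{ij}=\div(\mathbf{v})$ with $\mathbf{v}:=\div(P\xi)\in H^1(\O)^d$, and apply $(\star)$ directly to this $\mathbf{v}$ and the given $\phi\in H^1(\O)$; this gives $\int_\O(\hessian\!:\!P\xi)\phi=-\int_\O\nabla\phi\cdot\div(P\xi)+\int_{\partial\O}(\div(P\xi)\cdot\bfn)\phi$ at once.

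I do not anticipate a genuine obstacle here: the content is entirely classical, and once $(\star)$ is in hand the three formulas are immediate. The only point deserving attention is the justification of $(\star)$ on a merely Lipschitz (polytopal) domain, i.e. checking that every term — especially the boundary term — depends continuously on $(\mathbf{v},\phi)$ in the $H^1$ topology so that the density argument closes; this reduces to boundedness of the trace map $H^1(\O)\to L^2(\partial\O)$ and to Hölder's inequality on $\partial\O$. Keeping the index bookkeeping straight is the other minor chore, and the symmetry of $P\xi$ removes any ambiguity there.
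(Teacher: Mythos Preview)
Your argument is correct and is exactly the standard route: componentwise application of the Green/divergence formula $(\star)$ on a Lipschitz domain, justified by density and continuity of the trace, plus the observation that $\hessian\!:\!P\xi=\div(\div(P\xi))$ so that the first identity is $(\star)$ applied to $\mathbf v=\div(P\xi)$. The paper in fact states this lemma without proof, so there is nothing to compare against; your write-up would serve perfectly well as the missing proof. One minor remark: the boundary terms in the paper's displayed formulas contain what appear to be typographical slips (e.g.\ ``$\div(P\xi\, n)$'' where ``$(P\xi)\,n$'' is meant), and your derivation recovers the correct expressions $(\div(P\xi)\cdot\bfn)\phi$ and $((P\xi)\bfn)\cdot\zeta$.
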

\bibliographystyle{abbrv}
\bibliography{Fourth_order_elliptic}

\end{document}